\documentclass[11pt, a4paper]{amsart}
\usepackage{amssymb, array, amsmath, amscd, pdfpages, enumerate, amsthm, setspace,mathtools}
\usepackage[papersize={21cm,29.7cm},total={12.7cm,22.5cm},top=3.6cm,left=4.15cm]{geometry}

\usepackage[utf8]{inputenc}
\usepackage{amssymb}
\usepackage{bm}
\usepackage{overpic}
\usepackage[colorlinks=true,allcolors=magenta]{hyperref}
\usepackage{xcolor}
\usepackage{graphicx} 
\usepackage{ifthen}
\usepackage[capitalize,noabbrev]{cleveref}

\newcommand{\tp}{}

\newcommand{\BNsp}[4]{(#1,#2,#3,#4)}
\newcommand{\Dom}{D}
\newcommand{\eps}{\varepsilon}

\DeclareMathOperator{\re}{Re}

\DeclareMathOperator{\diag}{diag}

\newcommand{\mcA}{{\mathfrak{A}}}     
\newcommand{\mcB}{{\mathfrak{B}}}     
\newcommand{\mcC}{{\mathfrak{C}}}     
\newcommand{\mcK}{{\mathfrak{K}}}     

\newcommand*{\C}{{\mathbb{C}}}     
\newcommand*{\R}{{\mathbb{R}}}     
\newcommand*{\Z}{{\mathbb{Z}}}     
\newcommand*{\N}{{\mathbb{N}}}     
\newcommand*{\Hloc}[1]{H^{#1}_{loc}}
\newcommand{\conj}[1]{\overline{#1}}

\newcommand*{\Lin}{{\mathcal{L}}}   
\newcommand{\ran}{{\mathcal{R}}}   
\renewcommand{\ker}{{\mathcal{N}}}

\newcommand*{\abs}[1]{\lvert#1\rvert}
\newcommand*{\norm}[1]{\lVert#1\rVert}
\newcommand*{\set}[1]{\{#1\}}
\newcommand*{\setm}[2]{\{\,#1\mid#2\,\}}   
\newcommand*{\iprod}[2]{\langle#1,#2\rangle}    
\newcommand*{\Iprod}[2]{\left\langle#1,#2\right\rangle}    
\newcommand*{\Norm}[2][default]{\ifthenelse{\equal{#1}{default}}{\left\lVert#2\right\rVert}{\ldelim{#1}{\lVert}#2\rdelim{#1}{\rVert}}}

\newcommand*{\ldelim}[2]{\csname#1l\endcsname#2}   
\newcommand*{\rdelim}[2]{\csname#1r\endcsname#2}   
\newcommand*{\mdelim}[2]{\csname#1m\endcsname#2}   
 
\newcommand*{\Lp}[1][p]{L^{#1}}

\newcommand*{\Lploc}[1][p]{L^{#1}_{\text{loc}}}

  \newcommand{\pmat}[1]{\begin{bmatrix}#1\end{bmatrix}}
\newcommand{\pmatsmall}[1]{\begin{bsmallmatrix}#1\end{bsmallmatrix}}

\newcommand{\eq}[1]{\begin{align*}#1\end{align*}}
\newcommand{\eqn}[1]{\begin{align}#1\end{align}}
\newcommand{\ieq}[1]{$#1$}

\newcommand{\gs}{\sigma}
\newcommand{\ga}{\alpha}
\newcommand{\gb}{\beta}
\renewcommand{\gg}{\gamma}
\newcommand{\gd}{\delta}
\newcommand{\gl}{\lambda}
\newcommand{\gw}{\omega}

\newcommand{\inv}{^{-1}}

\newcommand{\polpar}{\ga}

\newcommand{\citel}[2]{\cite[#2]{#1}}

\renewcommand{\pmat}[1]{\begin{bmatrix}#1\end{bmatrix}}
\renewcommand{\pmatsmall}[1]{\begin{bsmallmatrix}#1\end{bsmallmatrix}}

\newcommand{\F}{\mathbb{F}}
\newcommand{\Pt}[1][t]{\mathbf{P}_{#1}}

\renewcommand{\Dom}{D}

\renewcommand{\AA}{\mathfrak{A}}
\newcommand{\BB}{\mathfrak{B}}
\newcommand{\CC}{\mathfrak{C}}
\newcommand{\KK}{\mathfrak{K}}

\renewcommand{\pmat}[1]{\begin{bmatrix}#1\end{bmatrix}}
\renewcommand{\pmatsmall}[1]{\begin{bsmallmatrix}#1\end{bsmallmatrix}}

\renewcommand{\ran}{\textup{Ran}}
\renewcommand{\ker}{\textup{Ker}}

\newcommand*{\dda}[3][1]{\ifthenelse{\equal{#1}{1}}{\frac{d#3}{d#2}}{\frac{d^{#1}#3}{d#2^{#1}}}}
\newcommand*{\ddb}[2][1]{\ifthenelse{\equal{#1}{1}}{\frac{d}{d#2}}{\frac{d^{#1}}{d#2^{#1}}}}
\newcommand*{\pd}[3][1]{\ifthenelse{\equal{#1}{1}}{\frac{\partial{#2}}{\partial{#3}}}{\frac{\partial^{#1}{#2}}{\partial#3^{#1}}}}
\newcommand*{\pdb}[2][1]{\ifthenelse{\equal{#1}{1}}{\frac{\partial}{\partial{#2}}}{\frac{\partial^{#1}}{\partial#2^{#1}}}}

\newcommand{\zinf}[1][0]{[#1,\infty)}
\newcommand{\zt}[1][t]{[0,#1)}
\newcommand{\zabr}[2]{(#1,#2]}

\newtheorem{theorem}{Theorem}[section]
\newtheorem{lemma}[theorem]{Lemma}
\newtheorem{proposition}[theorem]{Proposition}
\newtheorem{corollary}[theorem]{Corollary}
\theoremstyle{definition}
\newtheorem{definition}[theorem]{Definition}

\newtheorem{remark}[theorem]{Remark}

\numberwithin{equation}{section}

\begin{document}

\title{Dynamic Stabilisation of Boundary Control Systems}

\author[M.~Fkirine]{Mohamed Fkirine}
\address[M.~Fkirine]{Mathematics Research Centre, Tampere University, P.O.~ Box 553, 33101 Tampere, Finland}
 \email{mohamed.fkirine@tuni.fi}

\author[L.~Paunonen]{Lassi Paunonen}
\address[L.~Paunonen]{Mathematics Research Centre, Tampere University, P.O.~ Box 553, 33101 Tampere, Finland}
 \email{lassi.paunonen@tuni.fi}

\thanks{This work was supported by the Research Council of Finland Grant number 349002.}

\thispagestyle{plain}

\begin{abstract}
We design observer-based controllers to stabilise abstract linear boundary control systems on Hilbert spaces.
Our main results introduce conditions for exponential, strong, and polynomial stability, and 
 establish external well-posedness of the closed-loop system.
We design controllers for a one-dimensional wave equation, a two-dimensional wave equation with distributed control and observation, and a non-uniform SCOLE model.
\end{abstract}

\subjclass[2020]{%
93C25, 
93D15, 
  35B35 
(93C20,  
47D06, 
47A10)
}

\keywords{Dynamic stabilisation, boundary control system, feedback, polynomial stability, wave equation, SCOLE model.}

\maketitle

\section{Introduction}

In this paper we design controllers for controlled linear partial differential equations (PDEs) and infinite-dimensional linear systems. Our main interest is in stabilising PDE systems with boundary inputs. Majority of such models can be represented as \emph{abstract boundary control systems}~\cite{Sal87a,MalSta06} of the form%
\begin{subequations}%
\label{eq:plantintro}%
\label{eq:BCS}
\eqn{
\dot x(t) &= \mcA x(t) + B_i u(t) \label{eq:plantintroState}, \qquad x(0) = x_0\\
\mcB x(t) &= Qu(t) \label{eq:plantintroInput}\\
y(t) &= \mcC x(t)
}
\end{subequations}
on a Hilbert space $X$. Here~\eqref{eq:plantintroInput} describes the effect of the boundary input on the system, and the second term on the right-hand side of~\eqref{eq:plantintroState} models the possible in-domain control inputs.

We stabilise the boundary control system~\eqref{eq:plantintro} using an
 observer-based dynamic  feedback controller of the form
\begin{subequations}
\label{eq:controllerintro}
\eqn{
\dot{\hat x}(t) &= \mcA \hat x(t) + B_i u(t) + L_i (\hat y(t) - y(t)), \qquad \hat x(0) = \hat x_0\\
\mcB \hat x(t) &= Qu(t) + L(\hat y(t) - y(t))  \label{eq:controllerintroInput}\\
\hat y(t) &= \mcC \hat x(t)\\
u(t) &= \mcK \hat x(t).
}
\end{subequations}
Here $L$ and $L_i$ are output injection gains which facilitate the state estimation and $\KK$ is a stabilising state feedback gain operator.
Observer-based stabilisation of infinite-dimensional systems with boundary inputs has been studied earlier in the setting of regular linear systems in~\cite{WeiCur97} and well-posed linear systems in~\cite{Sta98}. In this work we utilise the framework of boundary control systems which often have a beneficially close connection to the underlying PDE model. In particular, the boundary trace operator $\BB$ in~\eqref{eq:plantintro} is easier to construct than the input operator in the state equations of a well-posed system. In addition, our framework also facilitates stabilisation of systems which are not well-posed.

Hyperbolic partial differential equations with boundary inputs and in-domain measurements (or vice versa) can only rarely be stabilised exponentially. We show that under mild assumptions our controller~\eqref{eq:controllerintro} instead achieves \emph{polynomial} or \emph{non-uniform stability}~\cite{BatDuy08,BorTom10,RozSei19}.
Indeed, the separation principle of observer-based stabilisation implies that closed-loop stability is based on the stability of the observer error dynamics, described by a strongly continuous semigroup $T_L$, and the stability of the system under state feedback, described by a semigroup $T_K$.
In a hyperbolic PDE with boundary control and in-domain observation, we may achieve exponential stability of $T_K$, but not of $T_L$. We show that if one of the semigroups is exponentially stable and the other one is non-uniformly stable, then the closed-loop system is non-uniformly stable, and we obtain a rate of decay for its classical state trajectories.

Our boundary control system~\eqref{eq:plantintro} differs slightly from the typical form where $Q=I$ and $B_i=0$. Our class with the bounded operator  $B_i$ can be used to study PDEs with control both at the boundary and the in interior of the spatial domain. Moreover, for parabolic systems --- even with boundary measurements --- we may often use an output injection described by a bounded operator, and this way the class~\eqref{eq:plantintro} is large enough to also contain the \emph{controllers} of classical boundary control systems. The role of $Q$ is to model the situation where the control input $u(t)$ does not affect all boundary conditions of the PDE. These uncontrolled boundary conditions could alternatively be included in the domain of $\AA$, but the formulation in~\eqref{eq:plantintroInput} facilitates their use for boundary output injections in~\eqref{eq:controllerintroInput}.

Our two main stabilisation results are for systems~\eqref{eq:plantintro} with different levels of  external well-posedness defined in terms of boundedness of the input, output, and input-output maps. Under stronger well-posedness assumptions our results also establish external well-posedness of the closed-loop system.
An important feature of observer-based stabilisation is that the Luenberger-type controller may not have well-defined dynamics when disconnected from the system. Instead, the controller should be considered to have \emph{an internal loop}~\cite{WeiCur97,CurWei96} which can be broken into an additional input-output pair to guarantee well-defined dynamics of the controller. We show that our controller~\eqref{eq:controllerintro}  has an internal loop in the sense of~\cite{WeiCur97,CurWei96}.

We illustrate the controller design for PDE systems by considering three concrete models. We first consider a two-dimensional wave equation with in-domain control and observation on different parts of the rectangular spatial domain. In the case where only one of these subdomains satisfies the Geometric Control Condition, the system cannot be stabilised exponentially, but we achieve polynomial stability. We also stabilise a one-dimensional wave equation with non-collocated boundary input and output and establish external well-posedness of the resulting closed-loop system. Finally, we stabilise a non-uniform SCOLE model~\cite{LitMar88b} consisting of an undamped beam equation with tip mass dynamics. Since the control acts through the finite-dimensional part of the model, exponential stabilisation is not possible, but our controller stabilises the system polynomially.

Observer design and observer-based stabilisation of infinite-dimensional linear systems are understood well in the case of systems with bounded input and output operators~\citel{CurZwa20book}{Sec.~8.3}. These techniques have also been extended to systems covering PDEs with boundary control and observation, most notably for Pritchard--Salamon systems in~\cite{Cur90,Log93}, regular linear systems in~\cite{WeiCur97}, and for well-posed systems in~\cite{Sta98}. In addition, finite-dimensional controllers have been introduced for parabolic systems with unbounded input and output operators in~\citel{LasTri00book}{Sec.~4B} and second order observers have been designed for abstract wave equations in~\cite{Dem04}.
Also abstract boundary control systems were treated in~\cite{Aks20} by converting them first to their state space representation.
For abstract boundary control systems the observer design problem without stabilisation has been studied in~\cite{VriKee07,VriKee10} (with focus on bilinear systems) and in~\cite{Emi20,Emi21}.  
We extend and unify these earlier studies of observer-based stabilisation of boundary control systems.
Moreover, observer-based polynomial stabilisation has previously only been studied for a one-dimensional wave equation in~\cite{PauHumSIAMCT19}, and we extend the approach in~\cite{PauHumSIAMCT19} to the general class of boundary control systems.
Besides the  abstract infinite-dimensional setting, observer-based stabilisation has been investigated actively for PDE models with boundary control and observation, e.g., in~\cite{SmyKrs05} using PDE backstepping, in~\cite{GuoWan08b} using spectral analysis, and in~\cite{MatNid22} using linear matrix inequalities. In our view, the class of abstract boundary control systems offers a convenient framework for controller design, because the representation in the form~\eqref{eq:plantintro} is relatively easy to construct compared to the regular or well-posed linear system settings, and because the controller~\eqref{eq:controllerintro} has an immediate interpretation as a PDE system. The abstract framework and our main results allow taking advantage of the separation principle to reduce the controller design problem to the design of the stabilising feedback and output injection gains for the PDE model.

The paper is organised as follows. In Section~\ref{sec:BCS} we formulate background results on the well-posedness, stability and feedback for boundary control systems. Our main results on controller design for abstract systems are presented in Section~\ref{sec:AbstractController}. In Section~\ref{sec:PDEs} we apply our results in the stabilisation of PDE systems.

\subsection*{Notation} 
If $X$ and $Y$ are Banach spaces and $A:\Dom(A)\subset X\rightarrow Y$ is a linear operator we denote by $\Dom(A)$, $\ker(A)$, and $\ran(A)$ the domain, the kernel, the and range of $A$, respectively. The space of bounded linear operators from $X$ to $Y$ is denoted by $\Lin(X,Y)$ and we write $\Lin(X)$ for $\Lin(X,X)$. If $A: \Dom(A)\subset X\rightarrow X$, then $\gs(A)$
and $\rho(A)$ denote the spectrum
and the \mbox{resolvent} set of $A$, respectively. 
The inner product on a Hilbert space is denoted by $\iprod{\cdot}{\cdot}$.
For $T\in \Lin(X)$ on a Hilbert space $X$ we define $\re T = \frac{1}{2}(T+T^\ast)$, and for $c\ge0$ we write $T\ge cI$ when $T-cI\ge 0$.
The growth bound of a strongly continuous semigroup $T$ is denoted by $\gw_0(T)$. 
We denote $f(x)\lesssim g(x)$ if there exists $M>0$ such that $f(x)\le Mg(x)$ for all parameters $x$.
The truncation of a function $f:I\to X$ defined on an interval $I\supset [0,t]$ to the interval $[0,t]$ is denoted by $\Pt f:[0,t]\to X$.
For $\gb\in\R$ we define $\C_{\gb}^+ = \setm{\gl\in\C}{\re \gl>\gb}$ and 
$H_\infty(\C_{\gb}^+;X)=\setm{f:\C_\gb^+\to X}{f \mbox{ analytic and } \sup_{\re\gl>\gb}\norm{f(\gl)}<\infty}$.

\section{Boundary Nodes}
\label{sec:BCS}

In this section we present background results for abstract boundary control systems of the form~\eqref{eq:BCS} with a state space $X$, an input space $U$, and an output space $Y$.
Our main assumption is that the operators of the system form a \emph{boundary node} defined below.

\begin{definition}\label{def:Bnode}
	The tuple $(\mcB,\mcA,\mcC,Q,B_i)$ is said to be a \emph{boundary node} on the Hilbert spaces $\BNsp{U_b}{U}{X}{Y}$ if the following hold.
	\begin{enumerate}
		\item[(i)] $\AA: \Dom(\AA)\subset X\to X$, $\mcB\in \mathcal{L}(D(\mcA),U_b)$, $\mcC\in \mathcal{L}(D(\mcA),Y)$, $Q\in \Lin(U,U_b)$, and $B_i\in \Lin(U,X)$.
		\item[(ii)] The restriction $A=\mcA\vert_{\ker(\mcB)}$ with domain $\Dom(A)=\ker(\BB)$ generates a strongly continuous semigroup on $X$.
		\item[(iii)]   $\mcB$ has a bounded right-inverse, i.e., there exists 
     $\mcB^r\in\mathcal{L}(U_b,D(\mcA))$
    such that $\mcB\mcB^r=I$.
	\end{enumerate}	
\end{definition}

We define the \emph{transfer functions} $H: \rho(A)\to \mathcal L(U,X)$ and $P: \rho(A)\to \mathcal L(U,Y)$ of the boundary node so that for $\gl\in \rho(A)$ and $u\in U $ we have $H(\gl)u := x$ and $P(\gl)u := \mcC x$ when $x\in \Dom(\mcA) $ is the unique element such that $(\gl - \mcA)x=B_i u$ and $\mcB x= Qu$. The existence and analyticity of the transfer functions follow from~\citel{NicPau25}{Prop. 2.4}.
We also note that $H(\gl)=H_b(\gl)Q + (\gl-A)\inv B_i$ and $P(\gl)=P_b(\gl)Q + \mcC(\gl-A)\inv B_i$, where $H_b$ and $P_b$ are the transfer functions of the boundary node $(\mcB,\mcA,\mcC,I,0)$ on $\BNsp{U_b}{U_b}{X}{Y}$.
We have $H(\gl)\in \Lin(U,\Dom(\mcA))$, $\mcB H(\gl)=Q$, $P(\gl)=\mcC H(\gl)$ and $(\gl-\mcA)H(\gl)=B_i$ for $\gl\in\rho(A)$ by~\citel{NicPau25}{Prop.~2.4}.

\begin{definition}
\label{def:Solutions}
Let $(\mcB,\mcA,\mcC,Q,B_i)$ be a boundary node on $\BNsp{U_b}{U}{X}{Y}$.
A triple $(x,u,y)$ is called a \emph{classical solution of} \eqref{eq:BCS} (\emph{on} $\zinf$) if
\begin{itemize}
    \item $x\in C^1(\zinf;X)$ satisfies $x(0)=x_0$, $u\in C(\zinf;U)$, and $y\in C(\zinf;Y)$
    \item $x(t)\in \Dom(\mcA)$ and 
     \eqref{eq:BCS} hold for all $t\geq 0$.
\end{itemize}
A triple $(x,u,y)$ is called a \emph{generalised solution of}~\eqref{eq:BCS} (\emph{on} $\zinf$) if 
\begin{itemize}
    \item $x\in C(\zinf;X)$ satisfies $x(0)=x_0$, 
    $u\in\Lploc[2](0,\infty;U)$, and $y\in\Lploc[2](0,\infty;Y)$
    \item there exist classical solutions $(x_k,u_k,y_k)$, $k\in\N$,  of  \eqref{eq:BCS} on $\zinf$ such that
$(\Pt[\tau] x_k,\Pt[\tau] u_k,\Pt[\tau] y_k)\tp\to (\Pt[\tau] x,\Pt[\tau] u,\Pt[\tau] y)\tp$  for every $\tau>0$ 
as $k\to \infty$ in $C([0,\tau];X)\times L^2(0,\tau;U) \times L^2(0,\tau;Y)$.
\end{itemize}
\end{definition}

In Section~\ref{sec:AbstractController} we are interested in designing a controller which stabilises the original unstable system~\eqref{eq:BCS} in such a way that the closed-loop semigroup is either exponentially stable, strongly stable, or \emph{polynomially stable}~\cite{BatDuy08,BorTom10}.
A polynomially stable semigroup is also strongly stable by~\citel{BatDuy08}{Thm.~1.1}.

\begin{definition}
\label{def:PolStab}
A strongly continuous semigroup $T$ generated by $A: \Dom(A)\subset X\to X$ on a Hilbert space $X$ is \emph{polynomially stable (with exponent $\ga>0$)} if
$T$ is bounded, i.e., $\sup_{t\ge0}\norm{T(t)}<\infty$,
 and if
 there exist $M,t_0>0$ such that 
\eq{
\norm{T(t)x} \le \frac{M}{t^\polpar}\left( \norm{Ax}+\norm{x} \right) , \qquad  t\ge t_0, \ x\in \Dom(A).
}
\end{definition}

\subsection{Well-Posedness of Boundary Nodes}

When $(\mcB,\mcA,\mcC,Q,B_i)$ is a boundary node on $\BNsp{U_b}{U}{X}{Y}$, we have from~\citel{MalSta06}{Lem. 2.6} that for every
 $x_0\in \Dom(\mcA)$ and $u\in C^2([0,\tau];U)$ satisfying $\mcB x_0=Qu(0)$ 
the equation~\eqref{eq:BCS} has a unique classical solution $(x,u,y)$.
We use these solutions to define the \emph{input map} $\Phi_\tau : C_\ell^2([0,\tau];U)\to X$, \emph{output map} $\Psi_\tau: \Dom(\mcA)\to L^2(0,\tau;Y)$, and \emph{input-output map} $\F_\tau: C_\ell^2([0,\tau];U)\to L^2(0,\tau;Y)$, where $\tau>0$ and $C_\ell^2([0,\tau];U):=\setm{f\in C^2([0,\tau];U)}{u(0)=0}$.
More precisely, we define $\Phi_\tau u =  x(\tau)$ and $\F_\tau u=\Pt[\tau] y$, where $(x,u,y)$ is the classical solution of~\eqref{eq:BCS} corresponding to $u\in C_\ell^2([0,\tau];U)$ and $x_0=0$. Moreover, we define $\Psi_\tau x_0= \Pt[\tau]y$ where $(x,u,y)$ is the classical solution of~\eqref{eq:BCS} with $u\equiv 0$ and $x_0\in \Dom(A)$.
We define the well-posedness of a boundary node following~\citel{JacZwa12book}{Def.~13.1.3} and~\cite{Sta05book}.

\begin{definition}
\label{def:BCSwellposed}
Let $(\mcB ,\mcA ,\mcC ,Q,B_i)$ be a boundary node on $\BNsp{U_b}{U}{X}{Y}$.
\begin{itemize}
\item The boundary node is \emph{(externally) well-posed} if there exist $\tau>0$ and $M_\tau>0$ such that
for all $x_0\in \Dom(\mcA)$ and $u\in C^2([0,\tau];U)$ with $\mcB x_0=Qu(0)$ the classical solution $(x,u,y)$ of~\eqref{eq:BCS} satisfies
\eq{
\norm{x(\tau)}_X^2 + \int_0^\tau \norm{y(t)}_Y^2dt
\leq M_\tau \left( \norm{x_0}_X^2 + \int_0^\tau \norm{u(t)}_U^2 dt \right).
}
\item
The boundary node has a \emph{well-posed input map} if there exist $\tau>0$ and $M_\tau>0$ such that
$\norm{\Phi_\tau u}\le M_\tau \norm{u}_{L^2(0,\tau)}$ for all $u\in C_\ell^2([0,\tau];U)$.
\item
The boundary node has a \emph{well-posed output map} if there exist $\tau>0$ and $M_\tau>0$ such that $\norm{\Psi_\tau x_0}_{L^2(0,\tau)} \le M_\tau \norm{x_0}_X$ for all $x_0\in \Dom(A)$.
\end{itemize}
\end{definition}

By definition, a well-posed boundary node has a well-posed input map and a well-posed output map.
The results in~\cite{MalSta06} and the theory of well-posed linear systems~\cite{Sta05book,TucWei14} show that if one of the estimates in \cref{def:BCSwellposed} holds for some $\tau>0$, then it also holds for every $\tau>0$ (with a modified constant $M_\tau>0$).
If a boundary node has a well-posed input map, then $\Phi_\tau$ extend to operators $\Phi_\tau\in \Lin(L^2(0,\tau;U),X)$, and if it has a well-posed output map, then $\Psi_\tau$ extend to operators $\Psi_\tau\in \Lin(X,L^2(0,\tau;Y))$.
Finally, if the boundary node is well-posed, then $\F_\tau$ extend to operators $\F_\tau\in \Lin(L^2(0,\tau;U),L^2(0,\tau;Y))$. 
In the latter case the tuple $(T,\Phi,\Psi,\F)$ with $\Phi=(\Phi_\tau)_{\tau\ge 0}$, $\Psi=(\Psi_\tau)_{\tau\ge 0}$, and $\F=(\F_\tau)_{\tau\ge 0}$ is a well-posed linear system in the sense of~\cite{TucWei14}.

\begin{proposition}
\label{prp:WPBCSsolutions}
Let $(\mcB ,\mcA ,\mcC ,Q,B_i)$ be a well-posed boundary node on $\BNsp{U_b}{U}{X}{Y}$ and let $(T,\Phi,\Psi,\F)$ be the associated well-posed linear system.
If $x_0\in \Dom(\mcA)$ and $u\in \Hloc{1}(0,\infty;U)$ satisfy $\mcB x_0= Qu(0)$, 
then~\eqref{eq:BCS} has a unique classical solution $(x,u,y)$ on $\zinf$. This solution is defined by
\begin{subequations}
\label{eq:BCSmildsol}
\eqn{
x(t) &= T(t)x_0 + \Phi_t \Pt u,  &&t\ge 0\\
\Pt y &= \Psi_t x_0 + \F_t \Pt u,  &&t\ge0
}
and we have $y\in \Hloc{1}(0,\infty;Y)$.
If $x_0\in X$ and $u\in \Lploc[2](0,\infty;U)$,  
then~\eqref{eq:BCS} has a unique generalised solution $(x,u,y)$ on $\zinf$ which is defined by~\eqref{eq:BCSmildsol}.
\end{subequations}
\end{proposition}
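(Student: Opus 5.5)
The plan is to reduce everything to the case $Q=I$, $B_i=0$ and then use the standard theory of well-posed boundary control systems in \cite{MalSta06}. The \emph{classical} part comes first: existence and uniqueness for $x_0\in\Dom(\mcA)$ and $u\in\Hloc{1}(0,\infty;U)$ with $\mcB x_0=Qu(0)$.

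Uniqueness is routine. If two classical solutions share the data, their difference $z$ satisfies $z\in C^1$, $\mcB z=0$ and $z(0)=0$. Hence $z(t)\in\Dom(A)$ and $\dot z=Az$, so $z\equiv0$ by uniqueness of classical solutions of the abstract Cauchy problem. Consequently $y=\mcC x$ is unique too.

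For existence I will not work with $C^2$ inputs, since $u$ is only $\Hloc{1}$. Instead I use the well-posed linear system $(T,\Phi,\Psi,\F)$ and its system node $S=\pmatsmall{A\&B\\ C\&D}$ in the sense of \cite{Sta05book,MalSta06}.
\begin{itemize}
\item First, identify the combined operator $A\&B$: it has domain $\setm{(x,u)\in X\times U}{x\in \Dom(\mcA),\ \mcB x=Qu}$ and acts by $A\&B\pmatsmall{x\\u}=\mcA x+B_iu$.
\item Likewise $C\&D\pmatsmall{x\\u}=\mcC x$ on the same domain. This identification follows from the construction in \cite{MalSta06}, where the boundary node $(\mcB,\mcA,\mcC,Q,B_i)$ is rewritten as a boundary node with input operator $\pmatsmall{\mcB\\ 0}$ on the augmented "state" $\pmatsmall{x\\u}$. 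Alternatively, one checks that the transfer functions coincide, using $(\gl-\mcA)H(\gl)=B_i$, $\mcB H(\gl)=Q$ and $P(\gl)=\mcC H(\gl)$.
\item Then invoke the standard result for well-posed system nodes (\citel{Sta05book}{Thm.~4.3.7}, or \citel{TucWei14}{}). For $\pmatsmall{x_0\\u(0)}\in\Dom(S)$ and $u\in\Hloc{1}$, the state $x(t)=T(t)x_0+\Phi_t\Pt u$ is in $C^1(\zinf;X)$, and $\pmatsmall{x(t)\\u(t)}\in\Dom(S)$ for all $t$.
\item The same result gives $\dot x=A\&B\pmatsmall{x\\u}$ and $y=C\&D\pmatsmall{x\\u}\in\Hloc{1}(0,\infty;Y)$, with $\Pt y=\Psi_tx_0+\F_t\Pt u$.
\end{itemize}
Translated back, this says $x(t)\in\Dom(\mcA)$, $\mcB x(t)=Qu(t)$, $\dot x=\mcA x+B_iu$ and $y=\mcC x$, so $(x,u,y)$ is a classical solution. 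Continuity of $u$ and $y$ follows from the $H^1_{loc}$ embedding.

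The main obstacle is the identification of the system node of $(T,\Phi,\Psi,\F)$ with the operators above, including the $Q$ and $B_i$ terms. The maps $\Phi,\Psi,\F$ are only defined through $C^2$ classical solutions. So I would argue through uniqueness: the system node generated by $(T,\Phi,\Psi,\F)$ reproduces these $C^2$ classical solutions, and such solutions are dense enough to pin down $A\&B$ and $C\&D$ on the stated domain. Closedness of that domain in the graph norm, which uses $\mcB\in\Lin(\Dom(\mcA),U_b)$, is what makes this density argument work.

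For the \emph{generalised} part, take $x_0\in X$ and $u\in\Lploc[2]$, and define $(x,y)$ by~\eqref{eq:BCSmildsol}.
\begin{itemize}
\item Choose a sequence $x_{0k}\in\Dom(A)$ with $x_{0k}\to x_0$, and a sequence $u_k\in C_c^\infty((0,\infty);U)$ with $u_k\to u$ in $L^2(0,\tau)$ for every $\tau$ (a diagonal argument handles all $\tau$ at once).
\item The compatibility condition $\mcB x_{0k}=0=Qu_k(0)$ holds, so the classical part gives classical solutions $(x_k,u_k,y_k)$.
\item Boundedness of $T(t)$, $\Phi_t$, $\Psi_t$ and $\F_t$, uniformly for $t\le\tau$, then gives convergence in $C([0,\tau];X)\times L^2\times L^2$. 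Strong continuity of $x$ follows from the properties of well-posed systems.
\end{itemize}
Uniqueness of the generalised solution holds because any approximating sequence of classical solutions has limits determined by the same bounded maps, by~\eqref{eq:BCSmildsol} applied to the classical solutions.
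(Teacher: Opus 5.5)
Your route is the paper's. You realise the boundary node as the system node $S$ in \eqref{eq:sysnode} with domain $\setm{(x,u)\tp\in\Dom(\mcA)\times U}{\mcB x=Qu}$, and then read off classical and generalised solutions from the solution theory of well-posed system nodes, which the paper takes from \citel{TucWei14}{Prop.~4.16 \& 4.17}. The identification of $S$ is exactly what \citel{MalSta06}{Thm.~2.3} provides, so the density argument you sketch for your ``main obstacle'' is not needed. Your classical part, and the existence of the generalised solution via $x_{0k}\in\Dom(A)$ and $u_k\in C_c^\infty$, are fine as written.

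The step that does not follow from what you prove is the \emph{uniqueness} of the generalised solution. You assert that every approximating classical solution $(x_k,u_k,y_k)$ satisfies \eqref{eq:BCSmildsol}. However, in the paper's definition a classical solution only has $u_k\in C(\zinf;U)$, and your classical part yields \eqref{eq:BCSmildsol} only when $u_k\in\Hloc{1}$, by combining existence of the formula solution with uniqueness. For a merely continuous $u_k$ you do not know that the right-hand side of \eqref{eq:BCSmildsol} is a classical solution, so uniqueness of classical solutions gives you nothing. This case is not vacuous: for $Q=0$ and $B_i\in\Lin(U,\Dom(A))$, every continuous $u$ produces a classical solution.

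The gap closes with Steklov averages. Since $\mcA x_k=\dot x_k-B_iu_k$ is continuous, $x_k$ is continuous in the graph norm of $\mcA$. Hence the averages $x_k^\varepsilon(t)=\varepsilon\inv\int_t^{t+\varepsilon}x_k(s)\,ds$, together with the analogous $u_k^\varepsilon$ and $y_k^\varepsilon$, form a classical solution with $C^1$ input and compatible initial state, because $\mcB$ and $\mcC$ commute with the integral. Your classical part gives \eqref{eq:BCSmildsol} for this averaged solution. Letting $\varepsilon\to0$ and using boundedness of $T(t)$, $\Phi_t$, $\Psi_t$, $\F_t$ then gives \eqref{eq:BCSmildsol} for $(x_k,u_k,y_k)$.

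Alternatively, write $\dot x_k=A_{-1}x_k+Bu_k$ in the extrapolation space and use variation of constants for the state and closedness of $C\&D$ for the output. Or simply invoke \citel{TucWei14}{Prop.~4.17}, as the paper does.
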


\begin{proof}
By~\citel{MalSta06}{Thm.~2.3} the boundary node $(\mcB,\mcA,\mcC,Q, B_i)$ defines a \emph{system node}~\citel{Sta05book}{Sec.~4.7} 
\eqn{
\label{eq:sysnode}
S= \pmat{\mcA\\ \mcC} \pmat{I\\ \mcB}\inv \pmat{I&0\\0&Q} + \pmat{0&B_i\\0&0}
}
on the spaces $(U,X,Y)$
with domain $\Dom(S) =  \setm{(x,u)\tp \in \Dom(\mcA)\times U}{\mcB x = Qu}$.
The result~\citel{MalSta06}{Thm.~2.3} and
our assumption that the boundary node $(\mcB,\mcA,\mcC,Q, B_i)$ is well-posed implies that $S$ is well-posed in the sense of~\citel{Sta05book}{Def.~4.7.2} and its associated well-posed system is exactly $\Sigma=(T,\Phi,\Psi,\F)$.
The claims follow from~\citel{TucWei14}{Prop.~4.16 \& 4.17} and the relationship between the boundary node and the system node $S$ in~\citel{MalSta06}{Thm.~2.3}.
\end{proof}

\begin{remark}
\label{rem:WPbnodetransfun}
If $(\mcB,\mcA,\mcC,Q,B_i)$ is a well-posed boundary node, then~\citel{MalSta06}{Thm.~2.3} and the results in~\citel{TucWei14}{Sec.~3 \& 4} imply that for any $\gw>\gw_0(T)$ its transfer functions $H$ and $P$ satisfy $P\in H_\infty(\C_{\gw}^+; \Lin(U,Y))$ and
$\norm{H(\gl)} \le M_1(\re \gl - \gw)^{-1/2}$ and $\norm{\CC(\gl-A)\inv} \le M_2(\re \gl - \gw)^{-1/2}$
for some constants $M_1,M_2>0$ and for all $\gl\in \C_\gw^+$.
\end{remark}

\begin{remark}
\label{rem:DistributedInputImap}
If a system does not have boundary inputs, then in the boundary node $(\mcB,\mcA,\mcC,Q,B_i)$ we can let $U_b=\set{0}$, in which case $\mcB=0$ and $Q=0$. Then $A=\mcA\vert_{\ker(\mcB)}=\mcA$ and the system~\eqref{eq:BCS} has the form
\eq{
\dot x(t) &= Ax(t) + B_i u(t), \qquad x(0)=x_0\\
y(t) &= \mcC x(t).
}
In this case the boundary node automatically has a well-posed input map. 
Analogously, if $\mcC\in \Lin(X,Y)$, then $(\mcB,\mcA,\mcC,Q,B_i)$ has a well-posed output map. More generally, the well-posedness of the output map means that   $C:=\mcC\vert_{\ker(\mcB)}$ is an admissible observation operator for the semigroup generated by $A$~\citel{TucWei09book}{Sec.~4.3}.
\end{remark}

The next result can be used in proving well-posedness of a boundary node.

\begin{lemma}
\label{lem:IPBCSWP}
Let 
 $(\mcB,\mcA,\mcC,I,0)$ be a boundary node on the Hilbert spaces $\BNsp{U}{U}{X}{U}$ with transfer function $P$ and
 let $K\in \Lin(U)$ be such that $\re K\ge cI$ for some $c>0$.
If 
\eqn{
\label{eq:BCSIP}
\re \iprod{\mcA x}{x}\le \re \iprod{\mcB x}{\mcC x}, \qquad x\in \Dom(\mcA),
}
then $A$ generates a contraction semigroup on $X$ and the following hold.
\begin{itemize}
\setlength{\itemsep}{1ex}
 \item[\textup{(a)}] 
$(\mcB,\mcA,\mcC,I,0)$ is well-posed if and only if 
 $\sup_{s\in\R}\norm{P(\gw+is)}<\infty$ for some $\gw>0$.
 \item[\textup{(b)}] 
 $(\mcB+K\mcC,\mcA,\mcC,I,0)$ is a well-posed boundary node.
 \item[\textup{(c)}]  $(I+KP(\cdot))\inv\in H_\infty(\C_0^+; \Lin(U))$.
\end{itemize}
\end{lemma}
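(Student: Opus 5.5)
First I will show that $A$ generates a contraction semigroup. Since $\Dom(A)=\ker(\mcB)$, the inequality~\eqref{eq:BCSIP} gives $\re\iprod{Ax}{x}\le 0$, so $A$ is dissipative. By Definition~\ref{def:Bnode}(ii), $A$ generates a $C_0$-semigroup, so $\gl-A$ is surjective for large real $\gl$. The Lumer--Phillips theorem then gives that the semigroup is contractive, and in particular $\C_0^+\subset\rho(A)$.

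The central tool is an energy identity for the closed-loop node $(\mcB+K\mcC,\mcA,\mcC,I,0)$. I first check that this is a boundary node. Its restriction $A_K=\mcA\vert_{\ker(\mcB+K\mcC)}$ is dissipative, because for $\mcB x=-K\mcC x$ we get $\re\iprod{\mcA x}{x}\le-\re\iprod{K\mcC x}{\mcC x}\le -c\norm{\mcC x}^2$.

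For surjectivity of $\gl-A_K$ with $\gl>0$, I solve $(\gl-\mcA)x=f$, $\mcB x+K\mcC x=0$. I write $x=(\gl-A)\inv f+H_b(\gl)v$ with $v\in U$ unknown. The boundary condition becomes $(I+KP(\gl))v=-K\mcC(\gl-A)\inv f$, so everything reduces to invertibility of $I+KP(\gl)$. For this I use the passivity-type estimate: taking $x=H_b(\gl)v$ in~\eqref{eq:BCSIP} gives
\[
\re\gl\,\norm{H_b(\gl)v}^2\le \re\iprod{v}{P(\gl)v},
\]
so $\re P(\gl)\ge0$ on $\C_0^+$. Since $\re K\ge cI$, the operator $K\inv$ exists and $\re K\inv\ge c'I$ with $c'=c/\norm{K}^2$. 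Hence $K\inv+P(\gl)$ has numerical range bounded away from zero uniformly, so it is invertible with $\norm{(K\inv+P(\gl))\inv}\le 1/c'$. Consequently $(I+KP(\gl))\inv=(K\inv+P(\gl))\inv K\inv$ is bounded on $\C_0^+$, and it is analytic because $P$ is. This proves (c) and, together with a right inverse (for instance $\mcB^r$ suitably corrected, or noting that $(\mcB+K\mcC)H_b(\gl)=I+KP(\gl)$ is invertible), gives the boundary node property.

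For well-posedness in (b) I integrate the energy inequality along classical solutions with input $w=\mcB x+K\mcC x$, i.e.\ $\mcB x = w - K\mcC x$:
\[
\tfrac{d}{dt}\norm{x}^2\le 2\re\iprod{w-K\mcC x}{\mcC x}\le 2\norm{w}\norm{\mcC x}-2c\norm{\mcC x}^2 .
\]
Young's inequality then yields $\norm{x(\tau)}^2+c\int_0^\tau\norm{y}^2\le\norm{x_0}^2+c\inv\int_0^\tau\norm{w}^2$, which is exactly Definition~\ref{def:BCSwellposed}.

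For (a), necessity is Remark~\ref{rem:WPbnodetransfun}. For sufficiency I view the original node as the closed-loop node of (b) with $K=I$ under the output feedback $w=u+\mcC x$: the transfer functions satisfy $P=P_I(I-P_I)\inv$ with $P_I=P(I+P)\inv$. This is a feedback of a well-posed system. By Weiss's criterion, an output feedback of a well-posed system is well-posed iff $(I-P_I(\gl))\inv=I+P(\gl)$ is uniformly bounded on some right half-plane. Boundedness of $P$ on the vertical line $\gw+i\R$ extends to $\C_\gw^+$ by a Phragmén--Lindelöf / maximum principle argument. For that I use the a priori bound $\norm{P(\gl)}\lesssim 1+\norm{H_b(\gl)}$ together with $\norm{H_b}$ being polynomially bounded, or alternatively the positivity $\re P\ge0$, so that $(I+P)\inv$ is a bounded analytic function.

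\textbf{Main obstacle.} I expect the most delicate step to be this last one: making the feedback/well-posedness equivalence rigorous at the level of boundary nodes, i.e.\ identifying the feedback system with the original boundary node, and extending the line bound to the half-plane.
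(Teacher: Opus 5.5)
Your proof is correct, but for (a) and (b) it takes a different route from the paper. Only (c) coincides: the paper also uses $\re P(\gl)\ge 0$, $\re K\inv\ge c\norm{K}^{-2}I$ and $I+KP=K(K\inv+P)$, though it cites \cite{NicPau25} for the positivity, while you derive it directly from \eqref{eq:BCSIP} with $x=H_b(\gl)v$.

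The paper proves (a) by passing to the system node of \cite{MalSta06}, observing that it is impedance passive, and invoking Staffans' theorem that an impedance passive node whose transfer function is bounded on a vertical line is well-posed. It then obtains (b) from \cite{NicPau25} together with (a).

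You reverse this dependency. You prove (b) from scratch: the resolvent of $A_K$ via $(I+KP(\gl))\inv$, Lumer--Phillips (density of the domain is automatic on a Hilbert space), the right inverse $H_b(\gl)(I+KP(\gl))\inv$, and a direct energy estimate, which even gives a $\tau$-independent constant. You then get (a) from (b) with $K=I$ by undoing the feedback. This is essentially a self-contained reproof of Staffans' result in the boundary-node setting, since feedback with $K=I$ is, up to scaling, the diagonal transform to a scattering passive system: $2P(I+P)\inv-I=(P-I)(P+I)\inv$.

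The step you flag as the main obstacle is already settled in the paper. Apply \cref{prp:BCSfeedback}, whose proof does not use this lemma, to the well-posed node $(\mcB+\mcC,\mcA,\mcC,I,0)$ with feedback $K=I$. This returns exactly $(\mcB,\mcA,\mcC,I,0)$, and it requires only $(I-P_I)\inv=I+P\in H_\infty(\C_\beta^+;\Lin(U))$.

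Your Phragm\'en--Lindel\"of argument for passing from the line to the half-plane works, but state the growth bound correctly. From $H_b(\gl)=\mcB^r-(\gl-A)\inv(\gl-\mcA)\mcB^r$ and $\norm{(\gl-A)\inv}\le 1/\re\gl$ one gets $\norm{P(\gl)}\lesssim(1+\abs{\gl})^2$ on $\C_\omega^+$. Your bound $\norm{P}\lesssim 1+\norm{H_b}$ holds only if $\norm{H_b}$ denotes the $\Lin(U,\Dom(\mcA))$-norm.

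Also drop the suggested alternative via $\re P\ge 0$. Boundedness of $(I+P)\inv$ is not what is needed; you need boundedness of $I+P$ itself.
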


\begin{proof}
Condition~\eqref{eq:BCSIP} implies that $A$ is dissipative, and thus the semigroup generated $A$ is contractive.
In (a), the ``only if'' part follows from \cref{rem:WPbnodetransfun}.
Assume now that $P\in H_\infty(\C_{\gw}^+; \Lin(U))$ for some $\gw>0$.
By~\citel{MalSta06}{Thm.~2.3} the boundary node $(\mcB,\mcA,\mcC,I, 0)$ defines a system node~\citel{Sta05book}{Sec.~4.7} 
\eq{
S= \pmat{\mcA\\ \mcC} \pmat{I\\ \mcB}\inv 
}
on the spaces $(U,X,U)$
with domain $\Dom(S)= \ran(\pmatsmall{I\\ \mcB}) =  \setm{(x,u)\tp \in \Dom(\mcA)\times U}{\mcB x = u}$.
The semigroup generator of $S$ is exactly $A$
and the transfer function of $S$ coincides with $P$  on $\C_{0}^+$.
Moreover, the property~\eqref{eq:BCSIP} implies that for every $(x,u)\tp\in \Dom(S)$ we have $\mcB x = u$ and $\pmatsmall{I\\\mcB}\inv \pmatsmall{x\\ u}=x$ and therefore~\eqref{eq:BCSIP} implies
\eq{
\re\Iprod{\pmat{x\\u}}{\pmat{I&0\\0&-I} S \pmat{x\\u}}_{X\times U}
= \re \iprod{x}{\mcA x}_X - \re \iprod{u}{\mcC x}_U \le 0.
}
Thus~\citel{Sta02}{Thm.~4.2} implies that $S$
 is impedance passive in the sense of~\citel{Sta02}{Def.~4.1}.
Our boundedness assumption on $P$ and~\citel{Sta02}{Thm.~5.1} imply that $S$ is a well-posed system node in the sense of~\citel{Sta02}{Def.~2.6}.
This together 
with the structure of the system node $S$ implies that $(\mcB,\mcA,\mcC,I,0)$ is well-posed.

The result in part (b) follows from~\citel{NicPau25}{Prop.~2.6} and part (a).

For part (c), let $\gl\in\C_0^+$ be arbitrary. We have $\re P(\gl)\ge 0$ by~\citel{NicPau25}{Prop.~2.4(b)}. Since $\re K\ge cI$ with $c>0$, $K$ is boundedly invertible and $\re K\inv \ge c\norm{K}^{-2}I$ by~\citel{Pau19}{Lem.~A.1(a)}. Thus  $\re (K\inv + P(\gl))\ge c\norm{K}^{-2} I$ and~\citel{Pau19}{Lem.~A.1(a)} implies that $K\inv+P(\gl)$ is boundedly invertible and $\norm{(K\inv+P(\gl))\inv}\le \norm{K}^2/c$. Since $\gl\in \C_0^+$ was arbitrary, the identity $I+KP(\gl)=K(K\inv + P(\gl))$ implies the claim.
\end{proof}

\subsection{Feedback Theory for Boundary Nodes}
In this section we investigate well-posedness and stability of boundary nodes arising from feedback. Our first result concerns the system~\eqref{eq:BCS} under output feedback $u(t)=Ky(t)$.

\begin{proposition}
\label{prp:BCSfeedback}
Assume that $(\mcB,\mcA,\mcC,Q,B_i)$ is a well-posed boundary node on the Hilbert spaces $\BNsp{U_b}{U}{X}{Y}$ with transfer function $P$. If $K\in \Lin(Y,U)$ and $(I-KP(\cdot ))\inv \in H_\infty(\C_{\gb}^+;\Lin(U))$ for some $\gb\in \R$, then $(\mcB-QK\mcC,\mcA+B_iK\mcC,\mcC,Q,B_i)$ is a well-posed boundary node $(U_b,X,Y,U)$.
\end{proposition}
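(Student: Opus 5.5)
The proof will pass to the system node picture and use the standard feedback theory for well-posed linear systems. By~\citel{MalSta06}{Thm.~2.3}, the well-posed boundary node $(\mcB,\mcA,\mcC,Q,B_i)$ defines the well-posed system node $S$ in~\eqref{eq:sysnode} on $(U,X,Y)$. Its associated well-posed system is $\Sigma=(T,\Phi,\Psi,\F)$ and its transfer function is $P$.

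The first step is to check that the closed-loop tuple $(\mcB-QK\mcC,\mcA+B_iK\mcC,\mcC,Q,B_i)$ is a boundary node. Its operators have the right mapping properties, since $\mcC\in\Lin(\Dom(\mcA),Y)$ and $K$, $Q$ and $B_i$ are bounded. For the right inverse of $\mcB-QK\mcC$, I will use a Neumann-series or extension argument; more simply, it should come out of the system node identification in the next step, which will also give the generator property (ii).

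The second step is to realise output feedback $u=Kv+\tilde u$ with $\tilde u$ the new input. In terms of $S$ this is the feedback $u=Ky+\tilde u$. Since $\Sigma$ is well-posed and $I-KP$ has a bounded inverse in $H_\infty(\C_\gb^+;\Lin(U))$, $K$ is an admissible feedback operator for $\Sigma$. For this I will invoke the characterization that admissibility is equivalent to $I-K\F_\tau$ being invertible on $L^2(0,\tau;U)$ for some $\tau$, or equivalently to $(I-KP)\inv$ being bounded on some right half-plane (e.g.\ \citel{Sta05book}{Ch.~7}, \cite{WeiCur97}). The Laplace transform of the input--output map on weighted $L^2$-spaces is $P$, so the $H_\infty$ assumption makes $I-K\F$ invertible as a causal operator on $L^2_{\gw}$ for large $\gw$; truncation then gives invertibility of $I-K\F_\tau$. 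The closed-loop system $\Sigma^K$ is then well-posed. Its system node $S^K$ has domain $\setm{(x,\tilde u)}{(x,K\mcC x+\tilde u)\in\Dom(S)}$ and is given by
\eq{
S^K\pmat{x\\ \tilde u}=S\pmat{x\\ K\mcC x+\tilde u}.
}

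The third step is to compute $S^K$ explicitly. The condition $(x,K\mcC x+\tilde u)\in \Dom(S)$ means $x\in\Dom(\mcA)$ and $\mcB x=QK\mcC x+Q\tilde u$, that is, $(\mcB-QK\mcC)x=Q\tilde u$. On this set $S^K$ maps $(x,\tilde u)$ to $(\mcA x+B_iK\mcC x+B_i\tilde u,\ \mcC x)$. This is exactly the system node that the candidate tuple would generate. The semigroup generator of $S^K$ is $(\mcA+B_iK\mcC)\vert_{\ker(\mcB-QK\mcC)}$, which yields (ii). For the bounded right inverse (iii), I will use that the closed-loop transfer function $H^K(\gl)=H(\gl)(I-KP(\gl))\inv$ is bounded into $\Dom(\mcA)$. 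Given $v\in U_b$, set $x=\mcB^r v$ and correct it by $H(\gl)(I-KP(\gl))\inv K\mcC\mcB^r v$-type terms. Solving $(\mcB-QK\mcC)x=v$ then reduces to inverting $I-QK\mcC H_b(\gl)=I-QKP_b(\gl)$ on $U_b$, which is invertible because $I-KP_b(\gl)Q=I-KP(\gl)$ is invertible, up to the bounded $B_i$-contribution handled by the same push-through identity. Well-posedness of the new boundary node then follows from well-posedness of $S^K$ via~\citel{MalSta06}{Thm.~2.3}, since the solutions coincide.

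The main obstacle is the rigorous identification of the closed-loop system node with the one induced by the new boundary node, together with the right-inverse construction. In particular, the $B_i$ term means $P=P_bQ+\mcC(\gl-A)\inv B_i$, so the push-through identity has to be carried out carefully with $B_i$ present.
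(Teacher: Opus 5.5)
Your proposal is correct and follows essentially the paper's route. You pass to the system node~\eqref{eq:sysnode}, obtain admissibility of $K$ from the $H_\infty$ bound on $(I-KP)\inv$, identify the closed-loop system node, and use the right inverse $\mcB^r + H(\gl)(I-KP(\gl))\inv K\mcC\mcB^r$. The one difference is that you read the generator $(\mcA+B_iK\mcC)\vert_{\ker(\mcB-QK\mcC)}$ directly off $S^K$, whereas the paper matches the resolvent $R(\gl)=(\gl-A)\inv + H(\gl)(I-KP(\gl))\inv K\mcC(\gl-A)\inv$ and the transfer functions on a half-plane.

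For the right inverse you need no detour through $H_b$, $P_b$ or push-through, and the identity $I-KP_b(\gl)Q=I-KP(\gl)$ you invoke fails when $B_i\neq0$. Since $\mcB H(\gl)=Q$ and $\mcC H(\gl)=P(\gl)$, the identity $(\mcB-QK\mcC)\bigl(\mcB^r + H(\gl)(I-KP(\gl))\inv K\mcC\mcB^r\bigr)=I$ is a one-line check.
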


\begin{proof}
The proof is based on the feedback theory of well-posed linear systems~\cite{Wei94}.
Let $H$ and $P$ be the 
 transfer functions of  $(\mcB,\mcA,\mcC,Q, B_i)$ and define $A_K=(\mcA+B_iK\mcC)\vert_{\ker(\mcB-QK\mcC)}$ with domain $\Dom(A_K)=\ker(\mcB-QK\mcC)$.
We can assume $\gb>\gw_0(T)$, where $T$ is the semigroup generated by $A$.
We will first show that $\C_{\gb}^+\subset \rho(A_K)$ and that $\mcB -QK\CC$ has a bounded right inverse.
For $\gl\in\C_{\gb}^+$, $I-KP(\gl)$ (and thus also $I-P(\gl)K$) is boundedly invertible by assumption and we can define
\eq{
R(\gl)= (\gl-A)\inv + H(\gl)(I-KP(\gl))\inv K\mcC (\gl-A)\inv.
}
A straightforward computation using the properties of boundary nodes and the transfer function $H$ in~\citel{NicPau25}{Prop.~2.4} can be used to verify that $\ran(R(\gl))\subset \Dom(A_K)$, and that $(\gl-A_K)R(\gl)=I$ and $R(\gl)(\gl-A_K)x =x$ for all $x\in \Dom(A_K)$. Thus $\gl\in\rho(A_K)$ and $(\gl-A_K)\inv =R(\gl)$.
Since $\gl\in\C_\gb^+$ was arbitrary, we have $\C_\gb^+\subset \rho(A_K)$.
Moreover, if $\gl\in\C_\gb^+$ and $\mcB^r\in \Lin(U_b,\Dom(\mcA))$ is a right inverse of $\mcB$, then a direct computation using $\mcB H(\gl)=Q$ shows that
\eq{
\MoveEqLeft[.2] (\mcB -QK\CC)(\mcB^r + H(\gl)(I-KP(\gl))\inv K\CC \mcB^r)\\
&= I-QK\CC \mcB^r +Q(I-KP(\gl))\inv K\CC \mcB^r -QKP(\gl)(I-KP(\gl))\inv K\CC \mcB^r\\
&= I.
}
Thus $\mcB^r + H(\gl)(I-KP(\gl))\inv K\CC \mcB^r\in \Lin(U_b,\Dom(\mcA))$ is a right inverse of $\mcB-QK\CC$, as required.

Similarly as in the proof of \cref{prp:WPBCSsolutions}, $(\mcB,\mcA,\mcC,Q, B_i)$ defines the system node $S$ in~\eqref{eq:sysnode} 
on the spaces $(U,X,Y)$
with domain $\Dom(S)=   \setm{(x,u)\tp \in \Dom(\mcA)\times U}{\mcB x = Qu}$.
By~\citel{MalSta06}{Thm.~2.3} the semigroup generator of $S$ is exactly $A$,
its output operator  is $C=\mcC\vert_{\Dom(A)}$, and
its input operator $B$  satisfies
 $H(\gl)=(\gl-A)\inv B$ for $\gl\in \rho(A)$ (here $A$ is considered as its extension to $X$,
 see~\citel{TucWei09book}{Sec.~4.2} for details).
The transfer function of $S$ coincides with  $P$ on the largest right half-plane of $\C$ contained in $\rho(A)$.
The result~\citel{MalSta06}{Thm.~2.3} and
our assumption that the boundary node $(\mcB,\mcA,\mcC,Q, B_i)$ is well-posed implies that $S$ is well-posed in the sense of~\citel{Sta05book}{Def.~4.7.2} and its associated well-posed system  $\Sigma=(T,\Phi,\Psi,\F)$ is composed of the input, output, and input-output maps of the boundary node.

Since $(I-KP(\cdot ))\inv \in H_\infty(\C_{\gb}^+;\Lin(U))$,
we have from~\citel{TucWei14}{Prop.~5.15} and~\citel{Sta05book}{Thm.~7.1.8} that $K$ is an admissible feedback operator for $\Sigma$ in the sense of~\citel{Sta05book}{Def.~7.1.1}. Because of this, there exists a well-posed linear system $\Sigma^K=(T^K,\Phi^K,\Psi^K,\F^K)$ such that 
\eq{
\pmat{T_K(\tau)&\Phi_\tau^K\\ \Psi_\tau^K & \F_\tau^K} - 
\pmat{T(\tau)&\Phi_\tau\\ \Psi_\tau & \F_\tau} 
=
\pmat{T(\tau)&\Phi_\tau\\ \Psi_\tau & \F_\tau} \pmat{0&0\\0&K}
\pmat{T_K(\tau)&\Phi_\tau^K\\ \Psi_\tau^K & \F_\tau^K}  
}
for all $\tau\ge 0$.
We have from~\citel{Sta05book}{Thm.~7.4.1 \& Lem.~7.4.4(iii)} that the semigroup generator $A^K$, the input operator $B^K$, the output operator $C^K$, and the transfer function $P^K$ of the system node $S^K$ associated to $\Sigma^K$ satisfy 
\eqn{
\label{eq:SysnodeFBrel}
\MoveEqLeft[1]\pmat{(\gl-A^K)\inv & (\gl-A^K)\inv B^K \\ C^K(\gl-A^K)\inv & P^K(\gl)} \\
&= \pmat{R(\gl) & H(\gl)(I-KP(\gl))\inv
\\ 
(I-P(\gl)K)\inv \mcC (\gl-A)\inv & P(\gl)(I-KP(\gl))\inv
}
\nonumber
}
on some open right half-plane contained in $\rho(A)\cap \rho(A^K)$. The formula in particular implies that $(\gl-A^K)\inv =R(\gl)= (\gl - A_K)\inv$ for $\gl$ on some right half-plane, and thus $A_K=A^K$ is the generator of the semigroup $T_K$.
This implies that $(\mcB-QK\mcC,\mcA+B_iK\mcC,\mcC,Q,B_i)$ is a boundary node. 
Denoting the transfer functions of this boundary node by $H_K$ and $P_K$, similar computations as in the proof of~\citel{NicPau25}{Prop.~2.6} show that, for every $\gl \in \C_{\gb}^+$,
\eq{
H_K(\gl) &= H(\gl) (I-KP(\gl))\inv\\
\mcC (\gl-A_K)\inv &= (I-P(\gl)K)\inv \mcC (\gl-A)\inv \\
P_K(\gl) &= P(\gl) (I-KP(\gl))\inv
}
This and~\eqref{eq:SysnodeFBrel} imply that the system node associated to this boundary node via~\citel{MalSta06}{Thm.~2.3} is $S^K$.
In particular, the input, output, and input-output maps of the boundary node $(\mcB-QK\mcC,\mcA+B_iK\mcC,\mcC,Q,B_i)$ are exactly $\Phi_\tau^K$, $\Psi_\tau^K$, and $\F_\tau^K$, respectively. 
Since $\Sigma^K$ is a well-posed system, the boundary node $(\mcB-QK\mcC,\mcA+B_iK\mcC,\mcC,Q,B_i)$ is well-posed.
\end{proof}

The following result presents sufficient conditions for well-posedness of cascaded and coupled systems.

\begin{proposition}
\label{prp:CascCoupWP}
Let $(\mcB_1,\mcA_1,\mcC_1,Q_1, B_{i1})$ be a well-posed boundary node on $\BNsp{U_{b1}}{U_1}{X_1}{Y_1}$
 with transfer function $P_1$ and let $(\mcB_2,\mcA_2,\mcC_2,Q_2, B_{i2})$ be a well-posed boundary node on $\BNsp{U_{b2}}{U_2}{X_2}{Y_2}$ with transfer function $P_2$. 
\begin{itemize}
\item[\textup{(a)}] If $K\in \Lin(Y_2,U_1)$, then 
\eq{
\left( \pmat{\mcB_1 & -Q_1K\mcC_2 \\ 0 & \mcB_2} 
, \pmat{\mcA_1 & B_{i1} K\mcC_2 \\ 0 & \mcA_2}, \pmat{\mcC_1 & 0 \\ 0 & \mcC_2},  \pmat{Q_1 & 0 \\ 0 & Q_2}, \pmat{B_{i1} & 0 \\ 0 & B_{i2}}\right)
}
is a well-posed boundary node on $\BNsp{U_{b1}\times U_{b2}}{U_1\times U_2}{X_1\times X_2}{Y_1\times Y_2}$
\item[\textup{(b)}] If $K_1\in \Lin(Y_2,U_1)$ and $K_2\in \Lin(Y_1,U_2)$ and if $(I-K_2P_1(\cdot)K_1P_2(\cdot))\inv \in H_\infty(\C_\gb^+;\Lin(U_2))$ for some $\gb\in\R$, then 
\eq{
\left(  \hspace{-.2ex}\pmat{\mcB_1 & \hspace{-2.5ex} -Q_1K_1\mcC_2 \\ -Q_2K_2\mcC_1 &\hspace{-2.5ex} \mcB_2} \hspace{-.5ex}
,  \hspace{-.2ex} \pmat{\mcA_1 & \hspace{-2.5ex}B_{i1} K_1\mcC_2 \\ B_{i2} K_2\mcC_1 & \hspace{-2.5ex}\mcA_2} \hspace{-.5ex}, \hspace{-.2ex}\pmat{\mcC_1 &\hspace{-1.5ex}  0 \\ 0 & \hspace{-1.5ex} \mcC_2} \hspace{-.5ex}, \hspace{-.2ex} \pmat{Q_1 &\hspace{-1.5ex}  0 \\ 0 &\hspace{-1.5ex}  Q_2} \hspace{-.5ex}, \hspace{-.2ex}\pmat{B_{i1} & \hspace{-1.5ex} 0 \\ 0 & \hspace{-1.5ex} B_{i2}} \hspace{-.2ex}\right)
}
is a well-posed boundary node on $\BNsp{U_{b1}\times U_{b2}}{U_1\times U_2}{X_1\times X_2}{Y_1\times Y_2}$. 
\end{itemize}
\end{proposition}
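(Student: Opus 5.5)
Both parts reduce to \cref{prp:BCSfeedback}, applied to the diagonal product of the two nodes. First observe that
\eq{
\left(\pmat{\mcB_1&0\\0&\mcB_2},\pmat{\mcA_1&0\\0&\mcA_2},\pmat{\mcC_1&0\\0&\mcC_2},\pmat{Q_1&0\\0&Q_2},\pmat{B_{i1}&0\\0&B_{i2}}\right)
}
is a well-posed boundary node on $\BNsp{U_{b1}\times U_{b2}}{U_1\times U_2}{X_1\times X_2}{Y_1\times Y_2}$. This is routine:
\begin{itemize}
\item the restriction of the block operator is $\diag(A_1,A_2)$, which generates the product semigroup;
\item $\diag(\mcB_1^r,\mcB_2^r)$ is a bounded right inverse;
\item well-posedness follows by adding the estimates of \cref{def:BCSwellposed} for the two components with a common $\tau$.
\end{itemize}
Its transfer function is $P=\diag(P_1,P_2)$, which lies in $H_\infty(\C_{\gw}^+)$ for $\gw$ large by \cref{rem:WPbnodetransfun}.

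For (b), take the feedback $K=\pmatsmall{0&K_1\\K_2&0}\in\Lin(Y_1\times Y_2,U_1\times U_2)$. Then
\eq{
QK\mcC=\pmat{0&Q_1K_1\mcC_2\\Q_2K_2\mcC_1&0},\qquad B_iK\mcC=\pmat{0&B_{i1}K_1\mcC_2\\B_{i2}K_2\mcC_1&0},
}
so the node produced by \cref{prp:BCSfeedback} is exactly the one in the statement. It therefore suffices to check that $(I-KP(\cdot))\inv\in H_\infty(\C_{\gb'}^+;\Lin(U_1\times U_2))$ for some $\gb'$. We have
\eq{
I-KP(\gl)=\pmat{I&-K_1P_2(\gl)\\-K_2P_1(\gl)&I}.
}
Take the Schur complement with respect to the $(1,1)$ block, which is $I$. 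The complement is $\Delta(\gl)=I-K_2P_1(\gl)K_1P_2(\gl)$, and by assumption it has a uniformly bounded inverse on $\C_\gb^+$. The block inverse is then
\eq{
\pmat{I+K_1P_2\Delta\inv K_2P_1 & K_1P_2\Delta\inv\\ \Delta\inv K_2P_1 & \Delta\inv}.
}
Each entry is analytic and uniformly bounded on $\C_{\max\{\gb,\gw\}}^+$, because $P_1,P_2$ are bounded there. Hence \cref{prp:BCSfeedback} applies with $\gb'=\max\{\gb,\gw\}$.

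Part (a) is the special case $K_1=K$, $K_2=0$. Then $\Delta=I$, the inverse exists trivially, and the condition holds.

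The only point needing care is the product-node step, which the statement does not provide directly. A full proof should at least check that classical solutions of the product are pairs of classical solutions. This holds because both the block operator $\mcB$ and the compatibility condition $\mcB x_0=Qu(0)$ decouple componentwise, so the well-posedness estimate is just the sum of the two individual ones.
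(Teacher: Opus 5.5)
Your proposal is correct and follows the paper's proof essentially verbatim. The paper also forms the diagonal product node, applies \cref{prp:BCSfeedback} with the feedback $\pmatsmall{0&K_1\\K_2&0}$, and inverts $I-KP(\gl)$ via the same Schur complement; it writes the inverse as a product of three block factors that multiplies out to exactly your block inverse, and it obtains (a) from (b) with $K_2=0$.
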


\begin{proof}
Part (a) follows from part (b) with the choices $K_1=K$ and $K_2=0$.
To prove part (b), denote
$A_1=\mcA_1\vert_{\ker(\mcB_1)}$, 
$A_2=\mcA_2\vert_{\ker(\mcB_2)}$,
 $X=X_1\times X_2$, $U=U_1\times U_2$, $Y=Y_1\times Y_2$, and $U_b=U_{b1}\times U_{b2}$. Define $\mcA_o : \Dom(\mcA_o)\subset X\to X$, $\mcB_o\in \Lin(\Dom(\mcA_o),U_b)$, $\mcC_o\in \Lin(\Dom(\mcA_o),U)$, and $B_{io}\in \Lin(U,X)$ by $\Dom(\mcA_o)= \Dom(\mcA_1)\times \Dom(\mcA_2)$,
$\mcA_o = \diag(\mcA_1,\mcA_2)$, 
$\mcB_{io} = \diag(\mcB_1,\mcB_2)$, 
$\mcC_o = \diag(\mcC_1,\mcC_2)$, 
$Q_o = \diag(Q_1,Q_2)$ and
$B_o = \diag(B_{i1},B_{i2})$.
It is straightforward to verify that $(\mcB_o,\mcA_o,\mcC_o,Q,B_{io})$ is a well-posed boundary node on $\BNsp{U_b}{U}{X}{Y}$ and that its transfer function is $P=\diag(P_1,P_2)$ defined on $\rho(\mcA_o\vert_{\ker(\mcB_o)})=\rho(A_1)\cap \rho(A_2)$. 
Define
\ieq{
K_o=\pmatsmall{0&K_1\\K_2&0}\in \Lin(Y, U).
}
If $\gb\in\R$ is such that $(I-K_2P_1(\cdot)K_1P_2(\cdot))\inv \in H_\infty(\C_\gb^+;\Lin(U_1))$, then for every $\gl\in \C_\gb^+$ the operator
$I-K_oP(\gl)
$ 
has a bounded inverse
\eq{
(I-K_oP(\gl))\inv
= \pmat{I &\hspace{-1ex} K_1P_2(\gl)\\0 &I}
\hspace{-.5ex}\pmat{I&0\\0&\hspace{-1.5ex}(I-K_2P_1(\gl)K_1P_2(\gl))\inv} \hspace{-.5ex}\pmat{I&\hspace{-1.5ex}0\\K_2P_1(\gl) &\hspace{-1.5ex}I}\hspace{-.5ex}.
}
 \cref{rem:WPbnodetransfun} implies that $(I-K_0P(\cdot))\inv 
\in H_\infty(\C_\gb^+;\Lin(U))$ and the claim in part (b) follows from \cref{prp:BCSfeedback}.
\end{proof}

The following proposition concerning the stability properties of a cascade of two boundary nodes
$(\mcB_1,\mcA_1,\mcC_1,Q_1, B_{i1})$ and
$(\mcB_2,\mcA_2,\mcC_2,Q_2, B_{i2})$
 is the basis of our stability analysis in Section~\ref{sec:AbstractController}. 
In the result we denote by $T_1$ and $T_2$ the semigroups generated by $A_1= \mcA_1\vert_{\ker(\mcB_1)}$ and $A_2=\mcA_2\vert_{\ker(\mcB_2)}$, respectively.
Observability-type conditions for showing the polynomial stability of the semigroups $T_1$ and $T_2$ have been presented in~\cite{AnaLea14,JolLau20,ChiPau23}. 

\begin{proposition}
\label{prp:BNcascade}
Assume that $(\mcB_1,\mcA_1,\mcC_1,Q_1, B_{i1})$ is a boundary node with a well-posed input map  on $\BNsp{U_{b1}}{U_1}{X_1}{Y_1}$ and that $(\mcB_2,\mcA_2,\mcC_2,Q_2, B_{i2})$ is a boundary node with a well-posed output map on $\BNsp{U_{b2}}{U_2}{X_2}{Y_2}$. Then the operator defined by
\eq{
A = \pmat{\mcA_1 & B_{i1} \mcC_2 \\ 0 & \mcA_2}, 
\qquad \Dom(A) = \ker\left( \pmat{\mcB_1 & -Q_1\mcC_2 \\ 0 & \mcB_2} \right)
}
generates a strongly continuous semigroup $T$ on $X=X_1 \times X_2$
and we have $\gw_0(T)= \max \set{\gw_0(T_1),\gw_0(T_2)}$.
In addition, the following hold.
\begin{itemize}
\item[\textup{(a)}] If $T_1$ and $T_2$ are exponentially stable, then $T$ is exponentially stable.
\item[\textup{(b)}] If $T_1$ is strongly stable and $T_2$ is exponentially stable (or vice versa), then $T$ is strongly stable.
\item[\textup{(c)}] If $T_1$ is polynomially stable with exponent $\polpar>0$ and $T_2$ is exponentially stable (or vice versa), then $T$ is polynomially stable with exponent $\polpar>0$.
\end{itemize}
Finally, $ \rho(A) = \rho(A_1)\cap \rho(A_2) $ and
 for every $\gb>\min \set{\gw_0(T_1), \gw_0(T_2)}$ there exists $M>0$ such that 
\eqn{
\label{eq:ResNormEst}
\norm{(\gl-A)\inv} \le M (1+ \max \set{\norm{(\gl-A_1)\inv}, \norm{(\gl-A_2)\inv}}) 
}
for all $\gl\in\rho(A)$ with $\re\gl\ge \gb$.
\end{proposition}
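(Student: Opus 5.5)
The plan is to construct the resolvent of $A$ explicitly, identify the semigroup $T$ by a variation-of-constants formula, and read off all stability claims from that formula.

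\emph{Resolvent.} For $\gl\in\rho(A_1)\cap\rho(A_2)$ and $(f_1,f_2)\in X$, solve $(\gl-A)x=f$ with $x\in\Dom(A)$ componentwise. The second row gives $(\gl-\mcA_2)x_2=f_2$ and $\mcB_2x_2=0$, so $x_2=(\gl-A_2)\inv f_2$. The first row then reads $(\gl-\mcA_1)x_1=f_1+B_{i1}\mcC_2x_2$ with $\mcB_1x_1=Q_1\mcC_2x_2$. With $u=\mcC_2x_2$ and the transfer function $H_1$, this gives
\eq{
x_1=(\gl-A_1)\inv f_1+H_1(\gl)\mcC_2(\gl-A_2)\inv f_2 .
}
This shows $\rho(A_1)\cap\rho(A_2)\subset\rho(A)$. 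For the reverse inclusion, the block triangular structure gives the following: if $\gl\in\rho(A)$, then $(\gl-A)\inv$ restricted to $X_1\times\{0\}$ inverts $\gl-A_1$. Hence $\gl\in\rho(A_1)$, and then surjectivity and injectivity in the second component give $\gl\in\rho(A_2)$.

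\emph{Generation.} Define
\eq{
T(t)=\pmat{T_1(t)&S(t)\\0&T_2(t)}, \qquad S(t)x_2=\Phi^1_t\Pt(\Psi^2_\infty x_2),
}
where $\Psi^2$ is the output map of node 2 and $\Phi^1$ the input map of node 1. Both extend boundedly by the well-posedness assumptions. $S$ is strongly continuous and satisfies the cocycle identity $S(t+s)=T_1(t)S(s)+S(t)T_2(s)$, which follows from the composition properties of $\Phi^1$ and $\Psi^2$. Hence $T$ is a $C_0$-semigroup. Its Laplace transform equals the resolvent formula above, using $\widehat{\Psi^2x_2}(\gl)=\mcC_2(\gl-A_2)\inv x_2$ and the identity for $H_1(\gl)$. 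So the generator is $A$.

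\emph{Growth bound.} Since $T_1$ and $T_2$ are the diagonal compressions, $\gw_0(T)\ge\max\{\gw_0(T_1),\gw_0(T_2)\}$. For the upper bound, take $\gw>\max$. The input map satisfies $\norm{\Phi^1_t v}\lesssim\norm{e^{-\gw\cdot}v}_{L^2}e^{\gw t}$, and $\norm{e^{-\gw\cdot}\Psi^2_\infty x_2}_{L^2}\lesssim\norm{x_2}$; these are standard exponentially weighted estimates. Together they give $\norm{S(t)}\lesssim e^{\gw' t}$.

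\emph{Stability claims.} For (a), take $\gw<0$ in the estimate above. For (b) and (c), split into the two cases.

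Suppose $T_2$ is exponentially stable and $T_1$ is strongly or polynomially stable. Then $\Psi^2_\infty x_2\in L^2(0,\infty)$ with exponential weight. Writing
\eq{
S(t)x_2=T_1(t/2)S(t/2)x_2+\Phi^1_{t/2}\Pt[t/2](\Psi^2 T_2(t/2)x_2),
}
the second term is $O(e^{-\epsilon t})$. For the first term in case (c), use $x\in\Dom(A)$: then $S(t/2)x_2\in\Dom(A_1)$ with graph norm bounded by $\norm{Ax}+\norm{x}$. I expect this regularity bookkeeping to be the main obstacle. I would try first to obtain it via $S(t/2)x_2=(\gl-A_1)\inv[\dots]$ using the resolvent identity, i.e.\ by showing that $T$ maps $\Dom(A)$ into $\Dom(A)$ with $A_1$-components controlled. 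Polynomial decay then gives $t^{-\ga}$.

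Suppose instead $T_1$ is exponentially stable and $T_2$ is strongly or polynomially stable. Then $\Phi^1_\infty$ is bounded from $L^2(0,\infty)$ into $X_1$ with the tail-decay estimate
\eq{
\norm{S(t)x_2}\le\norm{T_1(t/2)S(t/2)x_2}+\norm{\Phi^1(\Psi^2T_2(t/2)x_2)}\lesssim e^{-\epsilon t}\norm{x_2}+\norm{\Psi^2_\infty T_2(t/2)x_2}.
}
For (b), strong stability of $T_2$ plus admissibility gives $\Psi^2_\infty T_2(s)x_2\to0$. This follows by density: the output is in $L^2$ and $T_2(s)x_2\to0$, so I would use an $L^2$-admissibility over $[0,\infty)$ from boundedness. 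For (c), use $\Psi^2_\infty T_2(s)x=\Psi^2_\infty A_2\inv T_2(s)A_2x$-type smoothing. Boundedness of $T$ follows similarly.

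\emph{Estimate \eqref{eq:ResNormEst}.} From the resolvent formula, bound $\norm{H_1(\gl)\mcC_2(\gl-A_2)\inv}$. If $\gb>\gw_0(T_1)$, then $\norm{H_1(\gl)}$ is uniformly bounded on $\re\gl\ge\gb$. Use the resolvent identity
\eq{
\mcC_2(\gl-A_2)\inv=\mcC_2(\mu-A_2)\inv[I+(\mu-\gl)(\gl-A_2)\inv]
}
for a fixed large $\mu$, which gives linear dependence on $\norm{(\gl-A_2)\inv}$. If $\gb>\gw_0(T_2)$, instead bound $\mcC_2(\gl-A_2)\inv$ uniformly and write $H_1(\gl)=H_1(\mu)+(\mu-\gl)(\gl-A_1)\inv H_1(\mu)$. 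The factor $\abs{\gl}$ appearing here is a hazard: I would control it via the Remark~\ref{rem:WPbnodetransfun}-type bound $\norm{H_1(\gl)}$ uniform on half-planes right of $\gw_0(T_1)$ within $\gb$ large, and the bounded region otherwise.
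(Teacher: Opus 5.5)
Your construction of $T$ is sound and takes a different route from the paper. You build the off-diagonal entry explicitly as $S(t)x_2=\Phi^1_t\Pt(\Psi^2_\infty x_2)$, check the cocycle identity, identify the generator via the Laplace transform, and get $\gw_0(T)\le\max\set{\gw_0(T_1),\gw_0(T_2)}$ from exponentially weighted input/output bounds. The paper instead gets generation from \cref{prp:CascCoupWP}(a), i.e.\ from the feedback theory behind \cref{prp:BCSfeedback}, and the growth bound from \eqref{eq:ResNormEst} plus Gearhart--Pr\"uss--Greiner. Your route is more self-contained here. The stability parts, however, have genuine gaps:
\begin{itemize}
\item[(i)] If $T_1$ is only strongly stable and $T_2$ is exponentially stable, the vector in $T_1(t/2)S(t/2)x_2$ moves with $t$, so strong stability of $T_1$ gives nothing. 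You must split at a \emph{fixed} time $m$ and use $\norm{\Phi^1_\tau\Pt[\tau]v}\le M\kappa\sum_k\norm{v}_{L^2(k,k+1)}$ together with $\norm{\Psi^2_1T_2(k)x_2}\lesssim e^{-\gw k}\norm{x_2}$. This is exactly \cref{lem:InputMapStability}.
\item[(ii)] If $T_1$ is exponentially stable and $T_2$ only strongly stable, your tail estimate needs $\Psi^2_\infty T_2(s)x_2\in L^2(0,\infty)$. Finite-time admissibility plus boundedness or strong stability of $T_2$ does not give infinite-time admissibility, so this is unavailable. The paper uses $\norm{S(n+t_0)x_2}\lesssim\sum_{k=0}^n e^{\gw_1(n-k)}\norm{T_2(k)x_2}$ with $\gw_1<0$, a convolution of an $\ell^1$ sequence with a null sequence.
\item[(iii)] In (c), $S(t)x_2$ is the state of node~1 driven from rest by the input $\mcC_2T_2(\cdot)x_2$, which need not vanish at $0$; so $S(t)x_2$ need not even lie in $\Dom(\mcA_1)$. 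The full first component $T_1(t)x_1+S(t)x_2$ of $T(t)x$, $x\in\Dom(A)$, lies in $\Dom(\mcA_1)$ but has $\mcB_1$-value $Q_1\mcC_2T_2(t)x_2\neq0$. It is therefore not in $\Dom(A_1)$, and the polynomial decay of $T_1$ cannot be invoked.
\end{itemize}
The paper does (c), in both orderings, in the frequency domain. $T$ is bounded by (b) and $i\R\subset\rho(A)$. Then \eqref{eq:ResNormEst} with~\citel{BorTom10}{Thm.~2.4} gives $\norm{(is-A)\inv}\lesssim 1+\abs{s}^{1/\polpar}$, and the converse direction of that theorem gives the rate. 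You prove \eqref{eq:ResNormEst} anyway, so take this route.

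Your proof of \eqref{eq:ResNormEst} also needs repair. With a \emph{fixed} $\mu$, the identity $\mcC_2(\gl-A_2)\inv=\mcC_2(\mu-A_2)\inv[I+(\mu-\gl)(\gl-A_2)\inv]$ carries the factor $\abs{\mu-\gl}$, which is unbounded on $\re\gl\ge\gb$; the same happens for $H_1$. The leftover region is an unbounded vertical strip, not a bounded set. The paper shifts horizontally by a fixed real $\gd>0$: $H_1(\gl)=[I+\gd(\gl-A_1)\inv]H_1(\gl+\gd)$ and $\mcC_2(\gl-A_2)\inv=\mcC_2(\gl+\gd-A_2)\inv[I+\gd(\gl-A_2)\inv]$. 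Here $\gl+\gd$ lies in a half-plane where \cref{rem:WPbnodetransfun}, applied to $(\mcB_1,\mcA_1,0,Q_1,B_{i1})$ and $(\mcB_2,\mcA_2,\mcC_2,0,0)$, gives uniform bounds.

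Finally, your argument for $\rho(A)\subset\rho(A_1)\cap\rho(A_2)$ fails: the second component of $(\gl-A)\inv(y_1,0)$ lies in $\ker(\gl-A_2)$, which need not be trivial. In fact the inclusion is false in general, and the paper's one-line justification has the same defect. Take $X_1=L^2(-\infty,0)$, $\mcA_1=\frac{d}{d\xi}$ on $H^1(-\infty,0)$, $\mcB_1f=f(0)$, $Q_1=1$, $B_{i1}=0$, $\mcC_1=0$. Take $X_2=L^2(0,\infty)$, $\mcA_2=\frac{d}{d\xi}$ on $H^1(0,\infty)$, $\mcB_2=0$, $\mcC_2f=f(0)$. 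Then $\norm{\Phi^1_tu}=\norm{u}_{L^2(0,t)}$ and $\Psi^2_tf=f\vert_{[0,t]}$, so the hypotheses hold. But $A=\frac{d}{d\xi}$ on $H^1(\R)$ has $\gs(A)=i\R$, while $\gs(A_1)=\gs(A_2)=\setm{\gl\in\C}{\re\gl\le0}$. What is true, and all that is used, is this: if $\gl\in\rho(A)$ and $\gl$ lies in $\rho(A_1)$ or in $\rho(A_2)$, then $\gl\in\rho(A_1)\cap\rho(A_2)$. In the first case, lift kernel vectors of $\gl-A_2$ via $H_1(\gl)\mcC_2$; in the second, use that $\ker(\gl-A_2)$ is trivial. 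Hence equality holds on $\re\gl>\min\set{\gw_0(T_1),\gw_0(T_2)}$.
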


\begin{remark}
\label{rem:NUStab}
By the choice of $\gb$, one of the resolvent norms on the right-hand side of~\eqref{eq:ResNormEst} is uniformly bounded with respect to $\gl$ with $\re\gl\ge \gb$.
This estimate implies that the claim regarding the polynomial stability of $T$ also immediately extends to \emph{non-uniform stability} in the sense of~\cite{BatDuy08,RozSei19}. Indeed, if $T_1$ is non-uniformly stable in the sense that
$T_1$ is bounded, $i\R \subset \rho(A_1)$, and $\norm{(is-A_1)\inv} \le N(s)$, $s\in \R$, for some non-decreasing function $N: \R\to \zinf[1]$ and if $T_2$ is exponentially stable, then $T$ is bounded (by part (b)), $i\R \subset \rho(A)$, and $\norm{(is-A)\inv} \lesssim N(s)$, $s\in\R$. An analogous conclusion holds if the roles of $T_1$ and $T_2$ are reversed.
\end{remark}

The proof of \cref{prp:BNcascade} is based on the following lemma. This result is a variation of~\citel{OosCur98}{Lem.~12} where the assumption of ``infinite-time admissibility'' is replaced with a stronger condition on the function $u$.

\begin{lemma}
\label{lem:InputMapStability}
Let $\Phi_t\in \Lin(L^2(0,t;U),X)$, $t\ge 0$, be well-posed input maps associated with a strongly stable semigroup $T$. If $u\in \Lploc[2](0,\infty;U)$ satisfies $(\norm{u}_{\Lp[2](k,k+1)})_{k=0}^\infty\in \ell^1(\C)$, then $\Phi_t \Pt u\to 0$ as $t\to\infty$. 
\end{lemma}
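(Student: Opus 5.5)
We decompose the input into unit-length pieces and use the composition property of well-posed input maps. For an integer $n$ and $t\ge n$, the composition property of input maps gives
\eq{
\Phi_t \Pt u = \sum_{k=0}^{n-1} T(t-k-1)\Phi_1 \big(\Pt[1] u(k+\cdot)\big) + \Phi_{t-n}\Pt[t-n] u(n+\cdot).
}
Here $u(k+\cdot)$ denotes the shifted function. More generally, for an integer $m=\lfloor t\rfloor$ we write
\eq{
\Phi_t\Pt u=\sum_{k=0}^{m-1}T(t-k-1)\Phi_1 u_k+\Phi_{t-m}\Pt[t-m]u(m+\cdot),
}
with $u_k:=\Pt[1]u(k+\cdot)\in L^2(0,1;U)$. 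The standard identity $\Phi_{s+r}(v\diamond w)=T(r)\Phi_s v+\Phi_r w$ for concatenated inputs yields this by induction.

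The key estimates are as follows. Since $T$ is strongly stable, the uniform boundedness principle gives $M_0:=\sup_{t\ge0}\norm{T(t)}<\infty$. Well-posedness gives $\norm{\Phi_s}\le M_1$ for all $s\in[0,1]$, since $\norm{\Phi_s}$ is nondecreasing in $s$ up to a constant via truncation. Set $a_k=\norm{u}_{L^2(k,k+1)}$ and $x_k=\Phi_1u_k\in X$, so that $\norm{x_k}\le M_1a_k$. The tail term is then bounded by $M_1a_m$, which tends to $0$ because $(a_k)\in\ell^1$.

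For the sum, fix $\eps>0$ and choose $N$ with $\sum_{k\ge N}a_k<\eps$. We split the sum at $N$. The terms with $k\ge N$ contribute at most $M_0M_1\sum_{k\ge N}a_k\le M_0M_1\eps$, uniformly in $t$. The finitely many terms $k<N$ satisfy $T(t-k-1)x_k\to0$ as $t\to\infty$ by strong stability. Hence $\limsup_{t\to\infty}\norm{\Phi_t\Pt u}\le M_0M_1\eps$. Since $\eps>0$ was arbitrary, the claim follows.

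The main care is needed in justifying the concatenation formula for $\Phi_t$ with non-integer $t$ and in obtaining the uniform bound on $\norm{\Phi_s}$ for $s\le1$. Both follow from the defining algebraic properties of well-posed input maps, namely causality and $\Phi_{s+r}=[T(r)\Phi_s,\ \Phi_r]$ on concatenations. The $\ell^1$ hypothesis is exactly what replaces infinite-time admissibility, and no further obstacle is expected.
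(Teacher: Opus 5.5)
Your proposal is correct and follows essentially the same route as the paper. Both arguments use the unit-interval decomposition from the composition property of input maps, the uniform bounds $\sup_{t\ge0}\norm{T(t)}<\infty$ and $\norm{\Phi_s}\le\norm{\Phi_1}$ for $s\le 1$, an $\ell^1$-tail that is small uniformly in $t$, and strong stability applied to finitely many fixed vectors. The differences are cosmetic: the paper lumps the head into the single term $T(t-m)\Phi_m\Pt[m]u$ and absorbs the fractional piece into the tail estimate, whereas you keep the $N$ head terms separate and handle the fractional piece via $a_m\to 0$.
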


\begin{proof}
Let $u\in \Lploc[2](0,\infty;U)$ be such that $(\norm{u}_{\Lp[2](k,k+1)})_{k=0}^\infty\in \ell^1(\C)$. Let $t>1$ be arbitrary and let $t_0\in \zt[1]$ and $n\in\N$ be such that $t=t_0+n$. Moreover, let $m\in \N_0$ be such that $m< n$.
The composition property~\citel{TucWei09book}{Sec.~4.2} of the input maps $\Phi_t$ implies that 
\eqn{
\label{eq:ImapSplit}
\Phi_t \Pt u
 &= T(t-m)\Phi_m \Pt[m] u + \Phi_{t-m} \Pt[t-m]u(\cdot+m) .
}
The composition property further implies that the second term satisfies
\eq{
\MoveEqLeft  \Phi_{t-m} \Pt[t-m]u(\cdot+m) \\
&=  \Phi_{t_0}\Pt[t_0] u(\cdot+n) + \sum_{k=0}^{n-m-1} T(t+m-k-1)\Phi_1 \Pt[1]u(\cdot+k+m).
}
Since $T$ is strongly stable, there exists $M\ge 1$ such that $\norm{T(s)}\le M$, $s\ge 0$. 
Since $\norm{\Phi_{t_0}}\le \norm{\Phi_1}\le \kappa$ 
 for some $\kappa>0$
by~\citel{TucWei09book}{Sec.~4.2}, we can estimate
\eq{
\MoveEqLeft[7]  
\norm{\Phi_{t-m} \Pt[t-m]u(\cdot+m)} 
\le \kappa\norm{u(\cdot+n)}_{\Lp[2](0,t_0)} + M\kappa \hspace{-1.2ex}\sum_{k=0}^{n-m-1} \norm{u(\cdot+k+m)}_{\Lp[2](0,1)}\\
&\le  M\kappa\sum_{k=m}^n \norm{u(\cdot+k)}_{\Lp[2](0,1)}
\le  M\kappa\sum_{k=m}^\infty \norm{u}_{\Lp[2](k,k+1)}.
}
Let $\eps>0$.
Since $(\norm{u}_{\Lp[2](k,k+1)})_{k=0}^\infty\in \ell^1(\C)$ by assumption, there exists $m\in\N$ such that the right-hand side is less than $\eps/2$.
Since $\Phi_m\Pt[m]u$  is a fixed element of $X$ and $T$ is strongly stable, 
there exists $t_\eps>m$ such that 
 $\norm{T(t-m)\Phi_m \Pt[m] u}<\eps/2$ for all $t\ge t_\eps$. 
Together with~\eqref{eq:ImapSplit} these properties imply that $\norm{\Phi_t\Pt u}\to 0$ as $t\to\infty$ as claimed.
\end{proof}

\begin{proof}[Proof of Proposition~\textup{\ref{prp:BNcascade}}]
The property that $A$ generates a strongly continuous semigroup on $X$ follows from applying \cref{prp:CascCoupWP}(a) to the well-posed boundary nodes 
$(\mcB_1,\mcA_1,0,Q_1, B_{i1})$  (on $\BNsp{U_{b1}}{ U_1}{X_1}{ \set{0}}$) and $(\mcB_2,\mcA_2,\mcC_2,0, 0)$  (on $\BNsp{U_{b2}}{ \set{0}}{X_2}{U_1}$) with $K_1=I\in \Lin(U_1)$.
Let $H_1$ be the transfer function of
 $(\mcB_1,\mcA_1,0,Q_1, B_{i1})$ 
and
 denote
$A_1=\mcA_1\vert_{\ker(\mcB_1)}$, 
$A_2=\mcA_2\vert_{\ker(\mcB_2)}$,  and
 $C_2=\mcC_2\vert_{\Dom(A_2)}$.
For $\gl\in\rho(A_1)\cap \rho(A_2)$ we define
\eqn{
\label{eq:cascaderesolvent}
R(\gl) = \pmat{(\gl-A_1)\inv & H_1(\gl) C_2 (\gl-A_2)\inv \\0 & (\gl-A_2)\inv}\in \Lin(X).
}
A straightforward computation using the properties of the boundary nodes and the transfer function $H_1$ in~\citel{NicPau25}{Prop.~2.4} can be used to verify that $\ran(R(\gl))\subset \Dom(A)$, and that $(\gl-A)R(\gl)=I$ and $R(\gl)(\gl-A)x =x$ for all $x\in \Dom(A)$. Thus $\gl\in\rho(A)$ and $(\gl-A)\inv =R(\gl)$.
Conversely, if $\gl\in\rho(A)$, then for every $y=(y_1,y_2)\tp\in X$ there exists a unique $x=(x_1,x_2)\tp \in \Dom(A)$ such that $(\gl-A)x=y$. Since $A_1$ and $A_2$ are closed operators, choosing $y_1=0$ and letting $y_2\in X_2$ be arbitrary shows that $\gl\in \rho(A_2)$, and choosing $y_2=0$ and letting $y_1\in X_1$ be arbitrary shows that $\gl\in \rho(A_1)$.
Thus $\rho(A)=\rho(A_1)\cap \rho(A_2)$.

We will now estimate the resolvent of  $A$.
Let
 $\gb>\min \set{\gw_0(T_1),\gw_0(T_2)}$.
Remark~\ref{rem:WPbnodetransfun} shows that the norms $\norm{H_1(\gl)}$ and $\norm{C_2(\gl-A_2)\inv}$ are uniformly bounded with respect to $\gl\in\C$ satisfying 
$\re\gl\ge \max \set{\gw_0(T_1),\gw_0(T_2)}+1$.
The resolvent identity 
and~\citel{NicPau25}{Prop.~2.4(iii)} imply 
 $H_1(\gl)=\left[1+\gd (\gl-A_1)\inv \right]H_1(\gl+\gd)$ 
and
$C_2(\gl-A_2)\inv =C_2(\gl+\gd-A_2)\inv\left[ 1+\gd (\gl-A_2)\inv \right] $ whenever $\gl,\gl+\gd\in\rho(A_1)\cap \rho(A_2)$, and thus 
 $\norm{H_1(\gl)}\lesssim 1+\norm{(\gl-A_1)\inv}$ and $\norm{C_2(\gl-A_2)\inv}\lesssim 1+\norm{(\gl-A_2)\inv}$ for $\gl\in\rho(A_1)\cap\rho(A_2)$ with $\re \gl\ge\gb$.
This, the fact that either $\norm{(\gl-A_1)\inv}$ or $\norm{(\gl-A_2)\inv}$ is uniformly bounded with respect to $\gl\in\C$ with $\re\gl\ge \gb$, and
 the form~\eqref{eq:cascaderesolvent} of the resolvent operator $(\gl-A)\inv$ imply the estimate for $\norm{(\gl-A)\inv}$ in the claim.
 Moreover, the estimate~\eqref{eq:ResNormEst} and the Gearhart--Pr\"uss--Greiner theorem imply $\gw_0(T)\le \max \set{\gw_0(T_1),\gw_0(T_2)}$.
  Since we clearly  have
$\gw_0(T)\ge \max \set{\gw_0(T_1),\gw_0(T_2)}$, we conclude that $\gw_0(T)\ge \max \set{\gw_0(T_1),\gw_0(T_2)}$ as claimed.

To complete the proof, we will analyse the 
stability of the semigroup $T$.
The claim regarding exponential stability follows directly from $\gw_0(T)= \max \set{\gw_0(T_1),\gw_0(T_2)}$.
 A direct computation using the feedback identity for $\Sigma^K$ in the proof of \cref{prp:BCSfeedback} shows that 
for every $t\ge0$ and $x=(x_1,x_2)\tp \in X$ we have $T(t)x=(T_1(t)x_1 + \Phi_t^1\Psi_t^2x_2,T_2(t)x_2)\tp$.
If $T_1$ is strongly stable and $T_2$ is exponentially stable,
there exists $M,\gw>0$ such that $\norm{T_2(t)}\le Me^{-\gw t}$, $t\ge 0$.
 For every $x\in X_2$ the function $u\in \Lploc[2](0,\infty;U_1)$ defined so that $\Pt u= \Psi_t^2 x$ for $t\ge 0$ satisfies  $\norm{u}_{\Lp[2](k,k+1)} = \norm{\Psi_1^2 T_2(k)x}_{\Lp[2](0,1)}\lesssim e^{-\gw k}$, which implies $(\norm{u}_{\Lp[2](k,k+1)})_{k=0}^\infty \in \ell^1(\C)$. Thus  $\Phi_t^1\Psi_t^2 x= \Phi_t^1\Pt u\to 0$ as $t\to\infty$ by \cref{lem:InputMapStability}. Since $x\in X_2$ was arbitrary, this implies that $T$ is strongly stable.
We will next analyse the case where $T_1$ is exponentially stable and 
$T_2$ is strongly stable.
Let $0>\gw_1>\gw_0(T_1)$ and $M\ge 1$ be such that $\norm{T_1(t)}\le Me^{\gw_1 t}$, $t\ge 0$ and
let $x\in X_2$. The composition properties of the input and output map~\citel{TucWei09book}{Sec.~4.2 \& 4.3} imply that if  $t=t_0+n$ with $t_0\in \zt[1]$ and $n\in\N_0$, then
\eq{
\Phi_t^1 \Psi_t^2 x = \Phi_{t_0}^1 \Psi_{t_0}^2 T_2(n)x + \sum_{k=0}^{n-1}T_1(t_0+n-1-k)\Phi_1^1 \Psi_1^2 T_2(k)x.
}
If we denote $a_k=\norm{T_2(k)x}$, $k\in\N_0$, then $\norm{\Phi_{t_0}^1}\le \norm{\Phi_1^1}$ and $\norm{\Psi_{t_0}^2}\le \norm{\Psi_1^2}$ (see~\citel{TucWei09book}{Sec.~4.2 \& 4.3})
imply that
\eq{
\norm{\Phi_t^1 \Psi_t^2 x}
\lesssim a_n + \sum_{k=0}^{n-1}e^{\gw_1(n-1-k)} a_k
\lesssim  \sum_{k=0}^ne^{\gw_1(n-k)} a_k
= (a\ast b)_n,
}
where $a=(a_k)_{k=0}^\infty$ and $b= (e^{\gw_1 k})_{k=0}^\infty$.
Since $T_2$ is strongly stable,
 $a_k\to 0$ as $k\to \infty$ and thus $(a\ast b)_n\to 0$ as $n\to \infty$. This implies that 
$\norm{\Phi_t^1 \Psi_t^2 x}\to 0$ as $t\to\infty$, and since $x\in X_2$ was arbitrary,  $T$ is strongly stable.

Finally, consider the case where $T_1$  is exponentially stable and $T_2$ is polynomially stable with exponent $\polpar>0$. 
Since a polynomially stable semigroup is strongly stable,  $T$ is strongly stable and in particular uniformly bounded by part (b). Moreover, $i\R\subset \rho(A_2)$ by~\citel{BatDuy08}{Thm.~1.1}, and thus
 $i\R\subset \rho(A_1)\cap \rho(A_2)= \rho(A)$.
Exponential stability of $T_1$
 and~\citel{BorTom10}{Thm.~2.4} imply  $\norm{(is-A_1)\inv}\lesssim 1$ and $\norm{(is-A_2)\inv}\lesssim 1+\abs{s}^{1/\polpar}$ for $s\in\R$. Thus~\eqref{eq:ResNormEst} implies that $\norm{(is-A)\inv}\lesssim 1+\abs{s}^{1/\polpar}$, and~\citel{BorTom10}{Thm.~2.4} shows that $T$ is polynomially stable with exponent $\polpar$. A completely analogous argument applies in the situation where $T_1$ is polynomially stable with exponent $\polpar>0$ and $T_2$ is exponentially stable.
\end{proof}

\section{Stabilising Controller Design}
\label{sec:AbstractController}

In this section we design a stabilising controller for the system
\begin{subequations}
\label{eq:plant}
\eqn{
\dot x(t) &= \mcA x(t) + B_i u(t), \qquad x(0) = x_0\\
\mcB x(t) &= Qu(t) \\
y(t) &= \mcC x(t),
}
\end{subequations}
where $(\mcB,\mcA,\mcC,Q,B_i)$ is a boundary node on the Hilbert spaces $\BNsp{U_b}{U}{X}{Y}$.
Our observer-based controller has the form
\begin{subequations}
\label{eq:controller}
\eqn{
\MoveEqLeft[6]\dot{\hat x}(t) = (\mcA+ B_i\mcK + L_i\mcC )\hat x(t) + B_i u_1(t) + L_i (u_2(t)-y(t)) \\
(\mcB -Q\mcK - L\mcC)\hat x(t) &= Qu_1(t) + L(u_2(t)-y(t)) \\
\hat y(t) &= \mcC \hat x(t) \\
u(t) &= \mcK \hat x(t) + u_1(t) 
}
\end{subequations}
with initial state $\hat x(0) = \hat x_0$.
Here $u$ and $y$ are the input and output, respectively, of the system~\eqref{eq:plant}, 
 $u_1$ and $u_2$ are external inputs, and $\hat y$ and $u $ are the outputs of the controller (see \cref{fig:contrscheme}).

\begin{figure}[h!]
\center{
\includegraphics[width=0.65\linewidth]{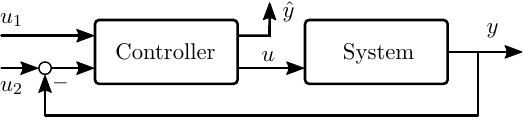}
}
\caption{The control scheme.}
\label{fig:contrscheme}
\end{figure}

The parameters of the controller are  
the state feedback gain $\mcK$  and the output injection gains $L$ and $L_i$.
Roughly stated, these parameters should be chosen so that the operators $A_K:=(\mcA+B_i\mcK)\vert_{\ker(\mcB-Q\mcK)}$ and  $A_L:=(\mcA+L_i\mcC)\vert_{\ker(\mcB-L\mcC)}$ generate stable semigroups. Our first main results in \cref{thm:ObsStab} and \cref{prp:ObsStabIP}  provide conditions for the stability and well-posedness of the closed-loop system consisting of~\eqref{eq:plant} and~\eqref{eq:controller}. Later in \cref{cor:ObsStabSGonly} we also stabilise the closed-loop system under weaker assumptions and without external well-posedness.
We note that the controller does not need to be associated to a boundary node when it is not connected to the system~\eqref{eq:plant}. 
We explain this aspect in detail after \cref{prp:ObsStabIP}.

If we define $x_e(t)=(x(t),\hat x(t))\tp\in X_e:=X\times X$,  $ u_e(t)= (u_1(t),u_2(t))\tp \in U_e:=U\times Y$, $ y_e(t)= (y(t),\hat y(t), u(t))\tp \in Y_e:=Y\times Y \times U$, and $U_{be}:=U_b\times U_b$, then the closed-loop dynamics of~\eqref{eq:plant} and~\eqref{eq:controller} are described by
\begin{subequations}
\label{eq:ObsCLsysExtIO}
\eqn{
\dot x_e(t) &= \mcA_e x_e(t) + B_{ei}  u_e(t), \qquad x_e(0)= x_{e0}\\
\mcB x_e(t) &= Q_e  u_e(t)\\
 y_e(t) &= \mcC_e x_e(t),
}
\end{subequations}
where $\mcA_e: \Dom(\mcA_e)\subset X_e\to X_e$
with domain $\Dom(\mcA_e)= \Dom(\mcA)\times \Dom(\mcA)$, 
 $\mcB_e\in \Lin( \Dom(\mcA_e), U_{be})$, $\mcC_e\in \Lin( \Dom(\mcA_e), Y_e)$, $B_{ei}\in \Lin(U_e, X)$, and $Q_e\in \Lin(U_e, U_{be})$ are 
\begin{subequations}
\label{eq:ObsCLsysExtIO_operators}
\eqn{
\mcA_e &= \pmat{\mcA & B_i \mcK \\ -L_i \mcC & \mcA + B_i\mcK + L_i \mcC}, \quad
\mcB_e = \pmat{\mcB & -Q \mcK \\ L\mcC & \mcB - Q\mcK - L \mcC},\\
\label{eq:ObsCLsysExtIO_operators2}
B_{ei} &= \pmat{B_i & 0\\ B_i & L_i}, \quad 
Q_e = \pmat{Q & 0\\ Q & L}, \quad 
\mcC_e = \pmat{\mcC & 0\\ 0& \mcC \\ 0& \mcK}.
}
\end{subequations}
Our main result below introduces conditions under which the closed-loop system is associated with a well-posed system node 
$(\mcB_e,\mcA_e, \mcC_e, Q_e,B_{ei})$ on 
$\BNsp{U_{be}}{U_e}{X_e}{Y_e}$ and 
provides conditions for the stability of the semigroup $T_e$ generated by $A_e:= \mcA_e\vert_{\ker(\mcB_e)}$.
Here we denote $A_K:=(\mcA+B_i\mcK)\vert_{\ker(\mcB-Q\mcK)}$ and $A_L:=(\mcA+L_i\mcC)\vert_{\ker(\mcB-L\mcC)}$.
Observability-type conditions for polynomial stability of the semigroups $T_K$ and $T_L$ have been presented in~\cite{AnaLea14,JolLau20,ChiPau23}. 
Moreover, \cref{lem:IPBCSWP}(c) can be used in verifying the conditions on the transfer functions $P_K$ and $P_L$ arising from collocated feedbacks and output injections.

\begin{theorem}
\label{thm:ObsStab}
Let $(\mcB ,\mcA ,  \mcC, Q, B_i)$ be a  boundary node on the Hilbert spaces $\BNsp{U_b}{U}{X}{Y}$ and let $\mcK\in \Lin(\Dom(\mcA),U)$, $L\in \Lin(Y,U_b)$, and $L_i \in \Lin(Y,X)$.
Assume that $(\mcB ,\mcA,  \pmatsmall{\mcC\\ \mcK}, [Q,L], [B_i,L_i])$ is a well-posed boundary node on $\BNsp{U_b}{U_e}{X}{Y\times U}$
and that there exists $\gb\in \R$ such that
\eq{
(I-P_K(\cdot))\inv \in H_\infty(\C_{\gb}^+;\Lin(U)) \quad \mbox{and} \quad (I-P_L(\cdot))\inv \in H_\infty(\C_{\gb}^+;\Lin(Y)),
}
where 
 $P_K$ and $P_L$ are the transfer functions of $(\mcB ,\mcA,   \mcK, Q, B_i)$ and $(\mcB ,\mcA,  \mcC, $ $ L, L_i)$, respectively.
Then $(\mcB_e,\mcA_e,\mcC_e,Q_e,B_{ei})$ is a well-posed boundary node on $\BNsp{U_{be}}{U_e}{X_e}{Y_e}$ and $A_K$ and $A_L$ generate strongly continuous semigroups $T_K$ and $T_L$, respectively. 
Moreover, the following hold.
\begin{itemize}
\item[\textup{(a)}] If $T_K$ and $T_L$ are exponentially stable, then $T_e$ is exponentially stable with growth bound $\gw_0(T_e)=\max \set{\gw_0(T_K), \gw_0(T_L)}$.
\item[\textup{(b)}] If $T_K$ is strongly stable and $T_L$ is exponentially stable (or vice versa), then $T_e$ is strongly stable.
\item[\textup{(c)}] If $T_K$ is polynomially stable with exponent $\polpar>0$ and $T_L$ is exponentially stable (or vice versa), then $T_e$ is polynomially stable with exponent $\polpar>0$.
\end{itemize}
\end{theorem}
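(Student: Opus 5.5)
The plan is to use the separation principle. A similarity transformation will turn the closed-loop boundary node into a cascade of the state feedback system and the observer error system. Then \cref{prp:BCSfeedback} gives well-posedness, and \cref{prp:BNcascade} gives stability.

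Define the bounded invertible operator $S=\pmatsmall{I&0\\-I&I}$ on $X_e$, so that $S(x,\hat x)\tp=(x,\hat x-x)\tp=(x,e)\tp$, and work with the transformed tuple
\[
(\mcB_eS\inv,\ S\mcA_eS\inv,\ \mcC_eS\inv,\ Q_e,\ SB_{ei}),
\]
which has domain $S\Dom(\mcA_e)=\Dom(\mcA)\times\Dom(\mcA)$. A direct computation with $x=x$, $\hat x=x+e$ gives:
\begin{itemize}
\item first state row $\mcA x+B_i\mcK x+B_i\mcK e$;
\item second state row $(\mcA+L_i\mcC)e$;
\item first boundary row $(\mcB-Q\mcK)x-Q\mcK e$;
\item second boundary row $\mcB(x+e)-Q\mcK(x+e)-L\mcC e$ minus the first row, i.e. an equation of the form $(\mcB-L\mcC)e=$ (input terms).
\end{itemize}
The input matrices become $\pmatsmall{B_i&0\\0&L_i}$ and $\pmatsmall{Q&0\\0&L}$. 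The observer error is driven by $u_2$ only (the $u_1$ contributions cancel), and the $x$-component is the system under state feedback, driven by $\mcK e$. Well-posedness of a boundary node is invariant under this similarity, since solutions correspond bijectively with equivalent norms. Hence it suffices to treat the transformed node, which is upper block-triangular.

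\textbf{Step 1 (the two feedback nodes).} By assumption the node $(\mcB,\mcA,\pmatsmall{\mcC\\\mcK},[Q,L],[B_i,L_i])$ is well-posed. Restricting inputs and outputs, I get that $(\mcB,\mcA,\mcK,Q,B_i)$ and $(\mcB,\mcA,\mcC,L,L_i)$ are well-posed, and so are the cross nodes $(\mcB,\mcA,\mcC,Q,B_i)$ and $(\mcB,\mcA,\mcK,L,L_i)$. Applying \cref{prp:BCSfeedback} to the first node with $K=I\in\Lin(U)$, using $(I-P_K)\inv\in H_\infty$, shows that
\[
(\mcB-Q\mcK,\ \mcA+B_i\mcK,\ \pmatsmall{\mcC\\\mcK},\ Q,\ B_i)
\]
is well-posed, and $A_K$ generates $T_K$. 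Similarly, applying \cref{prp:BCSfeedback} to $(\mcB,\mcA,\pmatsmall{\mcC\\\mcK},L,L_i)$ with feedback $[I,0]$ (since $(I-P_L)\inv\in H_\infty$) shows that $(\mcB-L\mcC,\mcA+L_i\mcC,\pmatsmall{\mcC\\\mcK},L,L_i)$ is well-posed, and $A_L$ generates $T_L$. More precisely, I would apply \cref{prp:BCSfeedback} to the full two-input node $(\mcB,\mcA,\pmatsmall{\mcC\\\mcK},[Q,L],[B_i,L_i])$ with feedback $\pmatsmall{0&I\\0&0}$ or its analogue, so that the outputs $\mcC e,\mcK e$ and the inputs are kept. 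Keeping these is what the cascade coupling needs.

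\textbf{Step 2 (cascade).} The transformed closed loop is the cascade of the $T_K$-node, driven by the input $\mcK e+u_1$, with the $T_L$-node, whose output $\mcK e$ feeds the first. By \cref{prp:CascCoupWP}(a), with the coupling operator picking out the $\mcK$-output, the transformed node is well-posed. Pulling back by $S$ gives the well-posedness of $(\mcB_e,\mcA_e,\mcC_e,Q_e,B_{ei})$. For the output $\mcC_e$, note that $\hat y=\mcC x+\mcC e$ and $u=\mcK x+\mcK e+u_1$ are combinations of the outputs of the two subnodes plus a feedthrough, so they are admissible. Moreover, $A_e=S\inv A_{\rm casc}S$, where $A_{\rm casc}$ is exactly the operator of \cref{prp:BNcascade}, with $T_1=T_K$ (input map well-posed) and $T_2=T_L$ with output $\mcK$ (output map well-posed). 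Parts (a)–(c) then follow from \cref{prp:BNcascade}, including $\gw_0(T_e)=\max\set{\gw_0(T_K),\gw_0(T_L)}$, since similarity preserves growth bound, strong stability and polynomial stability. For polynomial stability I use that $S$ maps $\Dom(A_e)$ onto $\Dom(A_{\rm casc})$ with graph norms equivalent.

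\textbf{Main obstacle.} The delicate part is Step 1: deriving the well-posedness of the $T_L$-node with observation $\mcK$ and of the $T_K$-node with input $B_i,Q$ after feedback. This requires applying \cref{prp:BCSfeedback} to the joint node with a non-square block feedback $K$, and checking that the invertibility of $I-KP$ reduces to that of $I-P_K$ or $I-P_L$ by a triangular factorisation, as in the proof of \cref{prp:CascCoupWP}. I would also double-check the second boundary row after the transformation.
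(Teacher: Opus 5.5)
Your proposal is correct, and its stability half matches the paper: the similarity $S$, then \cref{prp:BNcascade} applied to $(\mcB-Q\mcK,\mcA+B_i\mcK,\cdot,Q,B_i)$ and $(\mcB-L\mcC,\mcA+L_i\mcC,\pmatsmall{\mcC\\ \mcK},L,L_i)$, both obtained from \cref{prp:BCSfeedback}. For well-posedness, however, you take a different route.

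The paper never transforms the boundary node. It writes $(\mcB_e,\mcA_e,\mcC_e,Q_e,B_{ei})$ as the output feedback $K=\pmatsmall{0&0&I\\-I&I&0}$ applied to the decoupled node $(\diag(\mcB,\mcB),\diag(\mcA,\mcA),\mcC_e,Q_e,B_{ei})$. It then notes that $I-KP_o$ is block upper triangular, with diagonal blocks $I-P_K$, $I-P_L$ and off-diagonal block $-P_{KL}$ (bounded by \cref{rem:WPbnodetransfun}), and applies \cref{prp:BCSfeedback} once. $S$ enters only afterwards, at the level of the generator.

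You instead transform the whole node. After composing the output with the bounded matrix $\pmatsmall{I&0&0&0\\I&0&I&0\\0&I&0&I}$, you recognise it as the cascade of your two Step~1 nodes coupled through $[0,I]$, and invoke \cref{prp:CascCoupWP}(a), which carries no transfer-function condition. What this buys you:
\begin{itemize}
\item The separation principle organises both halves of the proof.
\item Each hypothesis on $(I-P_K)\inv$, $(I-P_L)\inv$ is used exactly once, in a feedback step where $I-KP$ is literally $I-P_K$ (feedback $[0,I]$ on $(\mcB,\mcA,\pmatsmall{\mcC\\ \mcK},Q,B_i)$) or $I-P_L$ (feedback $[I,0]$ on $(\mcB,\mcA,\pmatsmall{\mcC\\ \mcK},L,L_i)$).
\item Consequently the triangular factorisation you flag as the main obstacle is not needed.
\end{itemize}
The price is checking that the boundary-node property and the well-posedness estimate survive the transformation. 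This means verifying generation of the semigroup and the bounded right inverse of $\mcB_e$; both checks are elementary.

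One correction: the transformed tuple must be $(R\mcB_eS\inv,S\mcA_eS\inv,\mcC_eS\inv,RQ_e,SB_{ei})$, with the row operation $R=\pmatsmall{I&0\\-I&I}$ on $U_{be}$. If $Q_e$ is left unchanged, as you wrote it, the boundary input matrix does not become $\pmatsmall{Q&0\\0&L}$. Your ``minus the first row'' shows you are effectively already doing this.
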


\begin{proof}
We will establish the boundary node property and well-posedness of $(\mcB_e,\mcA_e, \mcC_e, Q_e,B_{ei})$  by applying output feedback of the form in \cref{prp:BCSfeedback} to an open-loop boundary node $(\mcB_o,\mcA_o, \mcC_e, Q_e,B_{ei})$ on the spaces $\BNsp{U_{be}}{U_e}{X_e}{Y_e}$. Indeed, if we define 
$\mcA_o: \Dom(\mcA_o)\subset X_e\to X_e$,
with domain $\Dom(\mcA_o)= \Dom(\mcA)\times \Dom(\mcA)$, 
 $\mcB_o \in\Lin( \Dom(\mcA_o), U_{be})$, and $K\in \Lin(Y_e,U_e)$ by
\eq{
\mcA_o &= \pmat{\mcA & 0 \\ 0 & \mcA }, \quad
\mcB_o = \pmat{\mcB & 0 \\ 0 & \mcB }, \quad
K = \pmat{0&0&I \\ -I& I&0},
}
then $\mcA_e = \mcA_o + B_{ei}K\mcC_e$ with $\Dom(\mcA_e)= \Dom(\mcA_o)$ and $\mcB_e = \mcB_o - Q_eK \mcC_e$. It is easy to check that $(\mcB_o,\mcA_o, \mcC_e, Q_e,B_{ei})$ is a well-posed boundary node. 
To apply \cref{prp:BCSfeedback}
 we analyse $I-KP_o(\gl)$, where $P_o$ is the transfer function of $(\mcB_o,\mcA_o, \mcC_e, Q_e,B_{ei})$. 
For $\gl\in\rho(\mcA_o\vert_{\ker(\mcB_o)})=\rho(A)$ a direct computation shows that 
\eq{
I-KP_o(\gl) 
= I- K\pmat{P(\gl)& 0\\ P(\gl)&P_L(\gl)\\ P_K(\gl) & P_{KL}(\gl)}
=  \pmat{I-P_K(\gl)&-P_{KL}(\gl) \\ 0&I- P_L(\gl)},
}
where $P_{KL}$ is the transfer function of $(\mcB ,\mcA,  \mcK,  L, L_i)$.
If $\gw> \max\set{\gw_0(T),\gb}$, then $P_{KL}\in H_\infty(\C_{\gw}^+; \Lin(Y,U))$ by  \cref{rem:WPbnodetransfun}  and our assumptions imply $(I-KP_o(\cdot))\inv \in H_\infty (\C_{\gw}^+; \Lin(U_e))$.
 \cref{prp:BCSfeedback} thus shows that $(\mcB_e,\mcA_e, \mcC_e, Q_e,B_{ei})$ is a well-posed boundary node on $\BNsp{U_{be}}{U_e}{X_e}{Y_e}$.

To study the stability of the semigroup $T_e$, define $S\in \Lin(X_e)$ such that 
$Sx = (x_1,x_2-x_1)\tp$ and $S\inv x = (x_1,x_1+x_2)\tp$ for $x=(x_1,x_2)\tp\in X_e$.
Then $\tilde A_e = S A_e S\inv$ with domain $\Dom(\tilde A_e)= S(\Dom(A_e))$ is the generator of the semigroup $ST_e S\inv$ and the stability properties of  $T_e$ and $ST_e S\inv$ coincide.
For $(x_1,x_2)\tp\in \Dom(\mcA)\times \Dom(\mcA)$ we have $x=(x_1,x_2)\tp\in \Dom(\tilde A_e)$ if and only if
\eq{
&S\inv x= (x_1 ,x_1+ x_2)\tp\in \Dom(A_e)= \ker(\BB_e)\\
 \Leftrightarrow \quad &
\begin{cases}
\mcB x_1 -Q\mcK (x_1 + x_2) =0\\
L\mcC x_1 +(\mcB-Q\mcK -L\mcC)(x_1 + x_2) =0
\end{cases}\\
 \Leftrightarrow \quad &
\begin{cases}
(\mcB-Q\mcK) x_1 -Q\mcK  x_2 =0\\
 (\mcB -L\mcC)x_2 =0.
\end{cases}
}
For $(x_1,x_2)\tp\in \Dom(\tilde A_e)$ a direct computation shows that
\eq{
\tilde A_e \pmat{x_1\\ x_2}
&= S \mcA_e\pmat{x_1\\ x_1 +x_2}
=  \pmat{(\mcA + B_i\mcK)x_1+ B_i\mcK x_2\\(\mcA + L_i \mcC)x_2}.
}
We will use \cref{prp:BNcascade} in the analysis of $\tilde A_e$.
Since $(I-P_K(\cdot))\inv \in H_\infty(\C_{\gb}^+;\Lin(U))$ and since $(\mcB,\mcA,\mcK,Q,B_i)$ is a well-posed boundary node,  \cref{prp:BCSfeedback} implies that 
$(\mcB-Q\mcK,\mcA+B_i \mcK,\mcK,Q,B_i)$ is a well-posed boundary node (and it in particular has a well-posed input map). 
Moreover, $(\mcB,\mcA,\pmatsmall{\mcC\\ \mcK},L,L_i)$ is a well-posed boundary node with  transfer function  $\pmatsmall{P_L\\ P_{KL}}$. We have $I-\pmat{I,\ 0}\pmatsmall{P_L \\ P_{KL}} = I-P_L$ and thus
 our assumption $(I-P_L(\cdot))\inv \in H_\infty(\C_{\gb}^+;\Lin(Y))$ and  
 \cref{prp:BCSfeedback} imply that $(\mcB-L\mcC,\mcA+L_i \mcC,\pmatsmall{\mcC\\ \mcK},L,L_i)$ is a well-posed boundary node (which has a well-posed output map). These properties in particular imply that $A_K$ and $A_L$ generate strongly continuous semigroups and the claims regarding the stability of $T_e$ follow from the structure of $\tilde A_e$ and \cref{prp:BNcascade}.
\end{proof}

Polynomial stability of both $T_K$ and $T_L$ leads to polynomial closed-loop stability for a class of  systems with collocated inputs and outputs.

\begin{proposition}
\label{prp:ObsStabIP}
Assume that $(\BB,\AA,\CC,I,0)$ is a well-posed boundary node on the Hilbert spaces $\BNsp{U_b}{U}{X}{U}$ and that $\re \iprod{\AA x}{x}_X\le \re \iprod{\BB x}{\CC x}_U $, $x\in \Dom(\AA)$. If we choose $\KK=-K\CC$, $K\in \Lin(U)$, $L\in \Lin(U)$, and $L_i=0$ so that $\re K\ge c_1I$ and $-\re L\ge c_2 I$ for some $c_1,c_2>0$, then 
$(\mcB_e,\mcA_e,\mcC_e,Q_e,B_{ei})$ is a well-posed boundary node on $\BNsp{U_{be}}{U_e}{X_e}{Y_e}$ and $A_K$ and $A_L$ generate strongly continuous semigroups $T_K$ and $T_L$, respectively. 
If $T_K$ and $T_L$ are polynomially stable with exponents $\polpar_K>0$ and $\polpar_L>0$, respectively, then $T_e$ generated by $A_e=\AA_e\vert_{\ker(\BB_e)}$ is polynomially stable with exponent $\polpar=\min \set{\polpar_K,\polpar_L}$.
\end{proposition}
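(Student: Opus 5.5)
The plan is to prove the well-posedness part with \cref{thm:ObsStab} and the polynomial stability part separately. \cref{prp:BNcascade}(c) cannot be used for the latter, because neither semigroup is exponentially stable. Instead I will combine the triangular similarity from the proof of \cref{thm:ObsStab} with dissipativity estimates and the Borichev--Tomilov theorem~\citel{BorTom10}{Thm.~2.4}.

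\emph{Step 1: hypotheses of \cref{thm:ObsStab}.} Here $Q=I$, $B_i=0$, $L_i=0$ and $\KK=-K\CC$. Since $K\in\Lin(U)$, the node $(\BB,\AA,\pmatsmall{\CC\\ -K\CC},[I,L],[0,0])$ is well-posed: its output map is $\pmatsmall{I\\ -K}$ composed with the output map of $(\BB,\AA,\CC,I,0)$, and the input $[I,L]u_e=u_1+Lu_2$ is a bounded transformation of the input. The transfer functions are $P_K=-KP$ and $P_L=PL$.
\begin{itemize}
\item By \cref{lem:IPBCSWP}(c), $(I-P_K)\inv=(I+KP)\inv\in H_\infty(\C_0^+;\Lin(U))$.
\item For $P_L$, apply \cref{lem:IPBCSWP}(c) with $-L$ in place of $K$, which is allowed since $\re(-L)\ge c_2I$. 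This gives $(I-LP)\inv\in H_\infty(\C_0^+)$. Then $(I-PL)\inv=I+P(I-LP)\inv L$ is bounded on $\C_\gw^+$ for any $\gw>0$, because $P\in H_\infty(\C_\gw^+)$ by \cref{rem:WPbnodetransfun} and $\gw_0(T)\le0$.
\end{itemize}
Thus \cref{thm:ObsStab} applies with $\gb=\gw>0$. This gives well-posedness of the closed-loop node and the existence of $T_K$ and $T_L$.

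\emph{Step 2: dissipativity and uniform boundedness of $T_e$.}
\begin{itemize}
\item For $x\in\Dom(A_K)$ we have $\BB x=-K\CC x$, so $\re\iprod{\AA x}{x}\le-c_1\norm{\CC x}^2$.
\item For $x\in\Dom(A_L)$ we have $\BB x=L\CC x$, so $\re\iprod{\AA x}{x}\le -c_2\norm{\CC x}^2$.
\end{itemize}
Hence $T_K$ and $T_L$ are contractions, and $\CC$ is infinite-time admissible for $T_L$ with $2c_2\int_0^\infty\norm{\CC T_L(t)x}^2dt\le\norm{x}^2$.

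For the $K$-node with input $u$, i.e.\ $\BB x=u-K\CC x$, the energy inequality gives
\eq{
\tfrac{d}{dt}\norm{x}^2\le 2\norm{u}\norm{\CC x}-2c_1\norm{\CC x}^2\le \norm{u}^2/(2c_1).
}
So its input map is infinite-time bounded. Via the similarity in the proof of \cref{thm:ObsStab}, $ST_e(t)S\inv(x_1,x_2)=(T_K(t)x_1+\Phi^K_t\Psi^L_tx_2,\,T_L(t)x_2)$, where the coupling output is $-K\CC x_2$. This shows $\sup_t\norm{T_e(t)}<\infty$. We also have $i\R\subset\rho(A_K)\cap\rho(A_L)=\rho(A_e)$ by \citel{BatDuy08}{Thm.~1.1} and \cref{prp:BNcascade}.

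\emph{Step 3: resolvent estimate (main obstacle).} By \eqref{eq:cascaderesolvent},
\eq{
\norm{(is-A_e)\inv}\lesssim \norm{(is-A_K)\inv}+\norm{(is-A_L)\inv}+\norm{H_K(is)}\,\norm{\CC(is-A_L)\inv}.
}
The crude bounds from the proof of \cref{prp:BNcascade} give $|s|^{1/\ga_K+1/\ga_L}$, which is too large. Instead I will use the square-root gain coming from dissipativity.
\begin{itemize}
\item For $y=(is-A_L)\inv x$ we have $c_2\norm{\CC y}^2\le\re\iprod{x}{y}$, so $\norm{\CC(is-A_L)\inv}\lesssim\norm{(is-A_L)\inv}^{1/2}$.
\item For $H_K(is)$, I would try first the dual argument. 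The adjoint of $H_K(is)$ is an observation-type operator $v\mapsto \mathcal{B}^*(-is-A_K^*)\inv v$ for the dual node, which is again impedance-dissipative with the gain $K^*$. This should yield $\norm{H_K(is)}\lesssim 1+\norm{(is-A_K)\inv}^{1/2}$. As a fallback, write $x=H_K(is)u$ and use $c_1\norm{\CC x}\le\norm{u}$ together with $x=H_K(1+is)u+(is-A_K)\inv H_K(1+is)u$ and an inner-product estimate.
\end{itemize}
Combining these with $\norm{(is-A_K)\inv}\lesssim1+|s|^{1/\ga_K}$ and $\norm{(is-A_L)\inv}\lesssim1+|s|^{1/\ga_L}$ from \citel{BorTom10}{Thm.~2.4} gives
\eq{
\norm{(is-A_e)\inv}\lesssim 1+|s|^{1/\ga_K}+|s|^{1/\ga_L}+|s|^{\frac{1}{2\ga_K}+\frac1{2\ga_L}}\lesssim 1+|s|^{1/\ga}.
}
Together with the boundedness of $T_e$ from Step 2, \citel{BorTom10}{Thm.~2.4} then yields polynomial stability with exponent $\ga=\min\set{\ga_K,\ga_L}$.
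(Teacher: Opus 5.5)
Your proposal follows the paper's proof essentially step by step. Well-posedness comes from \cref{thm:ObsStab} with \cref{lem:IPBCSWP}(c); your identity $(I-PL)\inv=I+P(I-LP)\inv L$ makes explicit a step the paper leaves implicit. Boundedness of $T_e$ comes from the triangular similarity together with uniform-in-$t$ bounds on $\Phi^K_t$ and $\Psi^L_t$: the paper cites \citel{HasPau25arxiv}{Thm.~2.5} for these, while you derive them from the dissipation inequality. Finally, square-root resolvent bounds are fed into \citel{BorTom10}{Thm.~2.4}.

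The one estimate you do not actually establish is $\norm{H_K(is)}\lesssim\norm{(is-A_K)\inv}^{1/2}$, which the paper takes from \citel{NicPau25}{Prop.~2.6}. Your duality route is only asserted. The identity with shift $1$ in your fallback gives only $\norm{H_K(is)}\lesssim 1+\norm{(is-A_K)\inv}$, which is not enough.

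You can close this with your own tools by shifting by a small $\eps>0$ instead of $1$. For $x=H_K(\eps+is)u$ we have $\AA x=(\eps+is)x$ and $\BB x=u-K\CC x$, so
\[
\eps\norm{x}^2=\re\iprod{\AA x}{x}\le\re\iprod{u}{\CC x}-c_1\norm{\CC x}^2\le\frac{\norm{u}^2}{4c_1}.
\]
Then $H_K(is)=[I+\eps(is-A_K)\inv]H_K(\eps+is)$ with $\eps=\norm{(is-A_K)\inv}\inv$ yields $\norm{H_K(is)}\le c_1^{-1/2}\norm{(is-A_K)\inv}^{1/2}$. With this, your Step~3 and the conclusion go through as written.
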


\begin{proof}
Denote the transfer function of $(\BB,\AA,\CC,I,0)$ by $P_0$. 
The boundary node $(\mcB ,\mcA,  \pmatsmall{\mcC\\ \mcK}, [I,L], [0,0])$ is  well-posed, and 
the transfer functions $P_K$ and $P_L$ in \cref{thm:ObsStab} satisfy $P_K(\cdot)=-K P_0(\cdot)$ and $P_L(\cdot) = P_0(\cdot )L$. \cref{lem:IPBCSWP}(c) and \cref{thm:ObsStab} imply that
$(\mcB_e,\mcA_e,\mcC_e,Q_e,B_{ei})$ is a well-posed boundary node on $\BNsp{U_{be}}{U_e}{X_e}{Y_e}$ and $A_K$ and $A_L$ generate strongly continuous semigroups.
To analyse the stability of $T_e$ generated by $A_e$, 
we denote by $\Phi_t^K$, $t\ge0$, the input maps of 
$(\mcB-\mcK,\mcA,\mcC,I,0)$  and by
$\Psi_t^L$, $t\ge0$, the output maps of $(\mcB-L\mcC,\mcA, \mcK,L,0)$.
The proof of \cref{prp:BCSfeedback} and~\citel{HasPau25arxiv}{Thm.~2.5} (see also~\citel{Sta02}{Cor.~6.1}) imply that $\norm{\Phi_t^K}$ and $\norm{\Psi_t^L}$ are uniformly bounded with respect to $t\ge0$. 
If $\tilde A_e$ is as in the proof of \cref{thm:ObsStab}, then 
as in the proof of \cref{prp:BNcascade}
the semigroup $\tilde T_e$ generated by $\tilde A_e$ has the form $\tilde T_e(t)x=(T_K(t)x_1 + \Phi_t^K\Psi_t^Lx_2,T_L(t)x_2)\tp$, $(x_1,x_2)\tp \in X_e$, $t\ge 0$.
Since $T_K$ and $T_L$ are strongly stable,~\citel{Sta05book}{Thm.~8.1.6(a)} implies that for every $x\in X$ 
the function $u:\zinf\to U$ defined by $\Pt[t]u=\Psi_t^L x$, $t\ge0$, satisfies $u\in L^2(0,\infty;U)$
and $\Phi^K_t\Psi_t^L x\to 0$ as $t\to \infty$. Thus $\tilde T_e$ is strongly stable and bounded.
The proof of \cref{prp:BNcascade} shows $i\R\subset \rho(\tilde A_e)$ and
\eq{
(is-\tilde A_e)\inv = \pmat{(is-A_K)\inv & H_K(is)\KK (is-A_L)\inv \\0& (is-A_L)\inv}, \qquad s\in \R,
}
where $H_K$ is the transfer function of $(\mcB-\KK,\mcA, \mcC,I,0)$.
We have from~\citel{NicPau25}{Prop.~2.6} that $\norm{H_K(is)}\lesssim \norm{(is-A_K)\inv}^{1/2}$ and $\norm{\KK(is-A_L)\inv}\lesssim \norm{(is-A_L)\inv}^{1/2}$.
This and the structure of $(is-\tilde A_e)\inv$ imply that $\norm{(is-\tilde A_e)\inv}\lesssim 1+\abs{s}^{1/\polpar}$ for $\polpar=\min \set{\polpar_K,\polpar_L}$. Thus $\tilde T_e$ is polynomially stable with exponent $\polpar$ by~\citel{BorTom10}{Thm.~2.4}, and by similarity the same is true for $T_e$.
\end{proof}

As mentioned above, the controller~\eqref{eq:controller} does not need to have  well-defined dynamics when disconnected from the system~\eqref{eq:plant}. 
In particular,  the restriction of $\mcA+B_i\mcK + L_i\mcC$ to $\ker(\mcB-Q\mcK -L_i\mcC)$ is not required to generate a strongly continuous semigroup on $X$. Instead, we consider~\eqref{eq:controller} as a controller \emph{with an internal loop} in the sense 
of~\cite{WeiCur97,CurWei01,Mik02phd}. 
In this setting, the controller is considered to be equipped with two inputs $u_{c1}$, and $u_{c2}$ and two outputs $\hat y$ and $y_c$ so that 
\begin{subequations}
\label{eq:controllerIloop}
\eqn{
\dot{\hat x}(t) &= \mcA \hat x(t) + B_i u_{c1}(t) + L_i (\mcC \hat x(t) +u_{c2}(t)), \qquad \hat x(0) = \hat x_0\\
\mcB \hat x(t) &= Qu_{c1}(t) + L(\mcC \hat x(t) +u_{c2}(t)) \\
\hat y(t) &= \mcC \hat x(t)\\
y_c(t) &= \mcK \hat x(t) .
}
\end{subequations}
The system~\eqref{eq:plant} and the controller~\eqref{eq:controllerIloop} are connected  in two steps:
\begin{itemize}
\item[\textup{1.}] We connect $u_{c2}(t)=-y(t)+u_2(t)$, where 
 $u_2$ is an external input, and set $u_{c1}(t)=u(t)$
\item[\textup{2.}] We connect $u(t)= y_c(t) + u_1(t)$ 
in the system and the controller to arrive at the final closed-loop system in \cref{fig:contrscheme}.
\end{itemize}
The following result shows that under the assumptions of \cref{thm:ObsStab} the controller~\eqref{eq:controllerIloop} and  the system in Step 1 are associated with well-posed boundary nodes.

\begin{proposition}
\label{prp:ContrIntLoop}
Let the assumptions of Theorem~\textup{\ref{thm:ObsStab}} hold.
The controller~\eqref{eq:controllerIloop} is associated with a well-posed boundary node $(\mcB-L\CC ,\mcA + L_i\CC,  \pmatsmall{\mcC\\ \mcK}, [Q,L], [B_i,L_i])$ on $\BNsp{U_b}{U_e}{X}{Y\times U}$.
Moreover,
 the composite system consisting of~\eqref{eq:plant} and~\eqref{eq:controllerIloop}
with the connections
 $u_{c2}(t)=-y(t)+u_2(t)$ and $u_{c1}(t)=u(t)$ defines a well-posed boundary node on $\BNsp{U_{be}}{U_e}{X_e}{Y_e}$ with input
 $(u,u_2)\tp$ and output $(y,\hat y,y_c)\tp$.
\end{proposition}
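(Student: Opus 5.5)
The plan is to obtain both claims from \cref{prp:BCSfeedback}, applied twice, always starting from the well-posed boundary node $(\mcB ,\mcA,  \pmatsmall{\mcC\\ \mcK}, [Q,L], [B_i,L_i])$ assumed in \cref{thm:ObsStab}.

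\textbf{Step 1: the controller \eqref{eq:controllerIloop}.}
The controller is exactly this node, closed by the output feedback $[u_{c1};u_{c2}]=[u_{c1};\hat y+u_{c2}]$. In other words, we apply the feedback operator $K_c=\pmatsmall{0&0\\ I&0}\in\Lin(Y\times U,U\times Y)$. Then
\[
\mcB-[Q,L]K_c\pmatsmall{\mcC\\ \mcK}=\mcB-L\mcC,
\qquad
\mcA+[B_i,L_i]K_c\pmatsmall{\mcC\\ \mcK}=\mcA+L_i\mcC .
\]
The transfer function of the node is
\[
\pmat{P_{\mcC Q} & P_L\\ P_K & P_{KL}},
\]
where $P_{\mcC Q}$ is the transfer function of $(\mcB,\mcA,\mcC,Q,B_i)$ and $P_{KL}$ is the one in the proof of \cref{thm:ObsStab}. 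Hence
\[
I-K_cP(\gl)=\pmat{I&0\\ -P_{\mcC Q}(\gl)&I-P_L(\gl)} .
\]
This operator is block lower triangular, and its inverse involves only $(I-P_L(\gl))\inv$ and $P_{\mcC Q}(\gl)$. Both are uniformly bounded on some $\C_\gw^+$:
\begin{itemize}
\item for $(I-P_L)\inv$ this is the assumption of \cref{thm:ObsStab};
\item for $P_{\mcC Q}$ this follows from \cref{rem:WPbnodetransfun}, after enlarging $\gw$ beyond the growth bound.
\end{itemize}
\Cref{prp:BCSfeedback} then shows that $(\mcB-L\CC ,\mcA + L_i\CC,  \pmatsmall{\mcC\\ \mcK}, [Q,L], [B_i,L_i])$ is a well-posed boundary node. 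Its equations coincide with \eqref{eq:controllerIloop}. This mirrors the argument in the proof of \cref{thm:ObsStab}, where only the $(2,2)$ block $I-P_L$ mattered.

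\textbf{Step 2: the composite system.}
For the Step~1 connection I form the diagonal open-loop node on $X_e$ with input $(u,u_{c1},u_{c2})$: the plant (with output $y$ only) next to the controller node from Step~1. This node is well-posed because each block is (as in the proof of \cref{prp:CascCoupWP}).

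Setting $u_{c1}=u$ and $u_{c2}=-y+u_2$ is a static relation rather than a feedback loop. The plant does not depend on the controller, so the interconnection is a cascade. I would therefore invoke \cref{prp:CascCoupWP}(a). It has one wrinkle: the input $u$ enters both subsystems, and $u_2$ enters only the controller.

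To handle this, I would treat the plant as subsystem $2$ with input $u$ and output $y$, and the controller as subsystem $1$ with input $(u_{c1},u_2)$. I connect through $K=\pmatsmall{0\\-I}$, and then premultiply the external input by the bounded duplication map $(u,u_2)\mapsto(u,(u,u_2))$. Composing a well-posed node with bounded input and output transformations clearly preserves well-posedness, since the input, output and input-output maps are just composed with bounded operators.

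Finally I would verify that the resulting operators are
\[
\pmat{\mcB&0\\ L\mcC&\mcB-L\mcC},\qquad
\pmat{\mcA&0\\ -L_i\mcC&\mcA+L_i\mcC},
\]
with $Q$-part $\pmatsmall{Q&0\\Q&L}$, $B$-part $\pmatsmall{B_i&0\\B_i&L_i}$, and output $(y,\hat y,y_c)$. This last part is routine bookkeeping.

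\textbf{Main obstacle.}
The most delicate point is Step~1: correctly identifying the transfer-function blocks and confirming that $I-K_cP$ is invertible with a bounded inverse on a half-plane. The rest of the argument follows from the stated propositions.
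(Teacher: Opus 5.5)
Your proposal is correct. Step 1 coincides with the paper's argument: the same feedback $\pmatsmall{0&0\\ I&0}$, the same lower-triangular $I-K_cP$, and the same use of $(I-P_L)\inv$, \cref{rem:WPbnodetransfun} and \cref{prp:BCSfeedback}.

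Step 2 takes a genuinely different route. The paper does not reuse Step 1. It returns to the diagonal open-loop node $(\mcB_o,\mcA_o,\mcC_e,Q_e,B_{ei})$ from the proof of \cref{thm:ObsStab}, which consists of two copies of the open plant with the input duplication already built into $Q_e$ and $B_{ei}$. It then closes the controller's internal loop and the connection $u_{c2}=-y+u_2$ in one step with $K=\pmatsmall{0&0&0\\ -I&I&0}$. Since $I-KP_o=\diag(I,I-P_L)$, only $(I-P_L)\inv$ is needed again.

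You instead take the already closed controller node from Step 1 and realise the interconnection as a true cascade via \cref{prp:CascCoupWP}(a): the plant is subsystem 2, the controller is subsystem 1, and $K=\pmatsmall{0\\-I}$. This is followed by the bounded duplication $(u,u_2)\mapsto((u,u_2),u)$ and a reordering of the state to $(x,\hat x)$. These steps do reproduce $\mcB_1$, $\mcA_1$, $Q_e$, $B_{ei}$ and $\mcC_e$.

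What your route buys is that it needs no second invertibility check, since a cascade is automatically well-posed. It also makes the plant-to-controller cascade structure explicit. The cost is bookkeeping: the duplication map, the state permutation, and the fact that the plant $(\mcB,\mcA,\mcC,Q,B_i)$ is itself well-posed. That last fact holds because the plant is a restriction of the assumed node, and you should state it explicitly. The paper's route is shorter because it reuses the open-loop node and the transfer-function computation from \cref{thm:ObsStab} verbatim, changing only $K$.
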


\begin{proof}
Denote the transfer function of the boundary node $(\mcB ,\mcA,  \mcK,  L, L_i)$ by $P_{KL}$.
Since $(\mcB ,\mcA ,  \pmatsmall{\mcC\\ \mcK}, [Q,L], [B_i,L_i])$ is a well-posed boundary node by assumption and since $I-\pmatsmall{0\ 0\\[.7ex]I \ 0}\pmatsmall{P&P_L \\ P_K&P_{KL}} = \pmatsmall{I & 0\\[.7ex]-P& I-P_L}$,
 our assumption $(I-P_L(\cdot))\inv \in H_\infty(\C_{\gb}^+;\Lin(Y))$, \cref{rem:WPbnodetransfun}, and  
 \cref{prp:BCSfeedback} imply that $(\mcB-L\CC ,\mcA + L_i\CC,  \pmatsmall{\mcC\\ \mcK}, [Q,L], [B_i,L_i])$ is a well-posed boundary node.
If we define $x_0(t)=(x(t),\hat x(t))\tp $, $u_0(t)=(u(t),u_2(t))\tp$, and $y_0(t)=(y(t),\hat y(t),y_c(t))\tp$, then the 
system consisting of~\eqref{eq:plant} and~\eqref{eq:controllerIloop} on $X_e$ has the form
\eq{
\dot x_0(t) &= \mcA_1 x_0(t) + B_{ei} u_0(t)\\
\mcB_1 x_0(t) &= Q_e u_0(t) \\
y_0(t) &= \mcC_e x_0(t),
}
where $\AA_1:\Dom(\AA_1)\subset X_e\to X_e$ 
with domain $\Dom(\mcA_1)=\Dom(\mcA)\times \Dom(\mcA)$
and $\BB_1\in \Lin(\Dom(\AA_1),U_{be})$ are defined by
\eq{
\mcA_1 &= \pmat{ \mcA   & 0\\ -L_i \mcC& \mcA + L_i \mcC}, \quad
\mcB_1 = \pmat{\mcB &   0 \\L\mcC& \mcB- L\mcC}
}
and where $B_{ei}$, $Q_e$, and $\mcC_e$ are as in~\eqref{eq:ObsCLsysExtIO_operators2}. 
If we also let $\mcA_o$, $\mcB_o$ and $P_o$ be as in the proof of \cref{thm:ObsStab} and define $K=\pmatsmall{0&0&0\\ -I&I&0}$, then 
 $(\mcB_o,\mcA_o, \mcC_e, Q_e,B_{ei})$ is again a well-posed boundary node and 
 $\mcA_1 = \mcA_o + B_{ei}K\mcC_e$  and $\mcB_1 = \mcB_o - Q_eK \mcC_e$.
For $\gl\in\rho(A)=\rho(\mcA_o\vert_{\ker(\mcB_o)})$ we have
\eq{
I-KP_o(\gl) 
= I- K\pmat{P(\gl)& 0\\ P(\gl)&P_L(\gl)\\ P_K(\gl) & P_{KL}(\gl)}
=  \pmat{I&0 \\ 0&I- P_L(\gl)}.
}
Thus $(I-KP_o(\cdot))\inv \in H_\infty (\C_{\gw}^+; \Lin(U_e))$
and \cref{prp:BCSfeedback} shows that $(\mcB_1,\mcA_1, \mcC_e, Q_e,B_{ei})$ is a well-posed boundary node on $\BNsp{U_{be}}{U_e}{X_e}{Y_e}$.
\end{proof}

If we do not require external well-posedness of the closed-loop system, it is possible to achieve the semigroup property and stability of the closed-loop system under strictly weaker assumptions than in \cref{thm:ObsStab}. In this situation we consider the closed-loop system consisting of~\eqref{eq:plant} and~\eqref{eq:controller} with $u_1\equiv 0$ and $u_2\equiv 0$  and without any outputs.
The dynamics of the closed-loop state
 $x_e(t)=(x(t),\hat x(t))\tp\in X_e$ are then described by
\eq{
\dot x_e(t) &= \mcA_e x_e(t), \qquad x_e(0)= x_{e0}\\
\mcB x_e(t) &= 0,
}
where $A_e:= \mcA_e\vert_{\ker(\mcB_e)}$.
The following result presents conditions under which $A_e$ generates an exponential, strong, or polynomially stable semigroup. 
We denote the semigroups generated by $A_K$ and $A_L$ by $T_K$ and $T_L$, respectively.

\begin{corollary}
\label{cor:ObsStabSGonly}
Let $(\mcB ,\mcA ,  \mcC, Q, B_i)$ be a  boundary node on the Hilbert spaces $\BNsp{U_b}{U}{X}{Y}$.
Let $\mcK\in \Lin(\Dom(\mcA),U)$, $L\in \Lin(Y,U_b)$, and $L_i \in \Lin(Y,X)$ be such that 
$(\mcB -Q\mcK ,\mcA + B_i\mcK,  \mcC, Q, B_i)$ is a boundary node with a well-posed input map and 
 $(\mcB-L\mcC ,\mcA + L_i\mcC,  \mcK, Q, B_i)$ is a boundary node with a well-posed output map.
Then $A_e$ generates a strongly continuous semigroup $T_e$ on $X_e$ and the following hold. 
\begin{itemize}
\item[\textup{(a)}] If $T_K$ and $T_L$ are exponentially stable, then $T_e$ is exponentially stable with growth bound $\gw_0(T_e)=\max \set{\gw_0(T_K), \gw_0(T_L)}$.
\item[\textup{(b)}] If $T_K$ is strongly stable and $T_L$ is exponentially stable (or vice versa), then $T_e$ is strongly stable.
\item[\textup{(c)}] If $T_K$ is polynomially stable with exponent $\polpar>0$ and $T_L$ is exponentially stable (or vice versa), then $T_e$ is polynomially stable with exponent $\polpar>0$.
\end{itemize}
\end{corollary}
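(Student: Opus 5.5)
The plan is to reduce the corollary to \cref{prp:BNcascade}, using the same similarity transformation as in the proof of \cref{thm:ObsStab}. Define $S\in\Lin(X_e)$ by $S(x_1,x_2)\tp=(x_1,x_2-x_1)\tp$. The computation carried out in that proof uses only the algebraic form of $\mcA_e$ and $\mcB_e$, not well-posedness, so it carries over unchanged. It gives that $\tilde A_e=SA_eS\inv$ has domain
\[
\Dom(\tilde A_e)=\ker\pmat{\mcB-Q\mcK & -Q\mcK\\ 0 & \mcB-L\mcC}
\]
and acts by $\tilde A_e(x_1,x_2)\tp=((\mcA+B_i\mcK)x_1+B_i\mcK x_2,(\mcA+L_i\mcC)x_2)\tp$. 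This is exactly the cascade operator of \cref{prp:BNcascade} with the following choices:
\begin{itemize}
\item first node $(\mcB_1,\mcA_1,\mcC_1,Q_1,B_{i1})=(\mcB-Q\mcK,\mcA+B_i\mcK,\mcC,Q,B_i)$;
\item second node $(\mcB_2,\mcA_2,\mcC_2,Q_2,B_{i2})=(\mcB-L\mcC,\mcA+L_i\mcC,\mcK,Q,B_i)$.
\end{itemize}
The coupling terms then match: $B_{i1}\mcC_2=B_i\mcK$ and $-Q_1\mcC_2=-Q\mcK$.

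By assumption, the first node is a boundary node with a well-posed input map, and the second is a boundary node with a well-posed output map. These are precisely the hypotheses of \cref{prp:BNcascade}. Moreover, $A_1=A_K$ and $A_2=A_L$, so $T_1=T_K$ and $T_2=T_L$.

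\cref{prp:BNcascade} therefore yields the following:
\begin{itemize}
\item $\tilde A_e$ generates a strongly continuous semigroup with growth bound $\max\set{\gw_0(T_K),\gw_0(T_L)}$;
\item parts (a)--(c) hold for this semigroup.
\end{itemize}
Since $S$ is bounded with bounded inverse, $A_e=S\inv\tilde A_eS$ generates $T_e=S\inv\tilde T_eS$. The growth bound is unchanged under this similarity. Strong stability and boundedness are preserved. Polynomial stability is also preserved, because $\norm{\tilde A_e Sx}+\norm{Sx}\lesssim\norm{A_ex}+\norm{x}$ for $x\in\Dom(A_e)=S\inv\Dom(\tilde A_e)$.

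The \textbf{main point to check} is that the domain computation is valid at the level of $\Dom(\mcA)\times\Dom(\mcA)$. This holds because $\Dom(\mcA_e)=\Dom(\mcA)\times\Dom(\mcA)$ is invariant under $S$, and because $\mcB-Q\mcK$ and $\mcB-L\mcC$ are bounded on $\Dom(\mcA)$. I also need to confirm that \cref{prp:BNcascade} really requires only the two partial well-posedness properties, and no transfer function condition. This is the case: its proof applies \cref{prp:CascCoupWP}(a) to the well-posed nodes $(\mcB_1,\mcA_1,0,Q_1,B_{i1})$ and $(\mcB_2,\mcA_2,\mcC_2,0,0)$, whose well-posedness follows exactly from the well-posed input map and output map respectively.
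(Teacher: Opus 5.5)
Your proposal is correct and follows the paper's proof essentially verbatim. The paper also conjugates $A_e$ by the map $S$ from the proof of \cref{thm:ObsStab} and identifies $\tilde A_e$ with the cascade operator of \cref{prp:BNcascade} for the nodes $(\mcB-Q\mcK,\mcA+B_i\mcK,\mcC,Q,B_i)$ and $(\mcB-L\mcC,\mcA+L_i\mcC,\mcK,Q,B_i)$, then concludes directly from that proposition; your remarks on why the growth bound, strong stability and polynomial stability survive the similarity just spell out what the paper states in one line.
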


\begin{proof}
Let $S,S\inv\in \Lin(X_e)$ be as in the proof of \cref{thm:ObsStab}.
The operator $A_e$ generates a strongly continuous semigroup if and only if $\tilde A_e = S A_e S\inv$ with domain $\Dom(\tilde A_e)= S(\Dom(A_e))$ generates a semigroup and the stability properties of the two semigroups coincide.
The computations in the proof of \cref{thm:ObsStab} show that 
\eq{
\tilde A_e = \pmat{\mcA + B_i\mcK & B_i\mcK\\ 0 & \mcA + L_i\mcC}, \quad
\Dom(\tilde A_e) = \ker \left( \pmat{\mcB-Q\mcK & -Q\mcK \\ 0 & \mcB -L\mcC} \right).
}
The claims therefore follow from our assumptions and \cref{prp:BNcascade}.
\end{proof}

\begin{remark}
\label{rem:ObsContrNUstab}
Polynomial closed-loop stability generalises immediately to  non-uni\-form stability with decay rates described by general resolvent bounds on $i\R$. The proofs of \cref{thm:ObsStab} and \cref{cor:ObsStabSGonly} with \cref{rem:NUStab} imply that if $T_K$ is exponentially stable and $T_L$ is strongly stable so that $i\R\subset \rho(A_L)$ (or vice versa), the closed-loop resolvent satisfies 
$\norm{(is-A_e)\inv}\lesssim 1+\max\set{\norm{(is-A_K)\inv },\norm{(is-A_L)\inv }}$, $s\in \R$.
 This leads to decay rates of the classical closed-loop solutions via~\cite{BatDuy08,RozSei19}.
As the proof shows, this same estimate also holds under the assumptions of \cref{prp:ObsStabIP} in the case where
 $T_K$ and $T_L$ are strongly stable so that $i\R\subset \rho(A_K)$ and $i\R\subset \rho(A_L)$.
\end{remark}

\section{Stabilisation of PDE Systems}
\label{sec:PDEs}

\subsection{A Two-Dimensional Wave Equation}

In this example we consider a wave equation on the square $\Omega = [0,1]\times [0,1]$. The wave profile is controlled over the subdomain $\Omega_c=[1/4,3/4]\times [1/4,3/4]$, and the observation is taken over the rest of the domain, $\Omega_o=\Omega\setminus \Omega_c$.
The system is defined as
\begin{subequations}
\label{eq:2Dwave}
\eqn{
w_{tt}(\xi,t) &= \Delta w(\xi,t) + \chi_{\Omega_c}(\xi) u(\xi,t), & \xi\in \Omega, \ t>0\\
w(\xi,t) &= 0 ,  & \xi\in \partial \Omega , \ t>0\\
y(t) &= w_t(\cdot,t)\vert_{\Omega_o} & \phantom{\xi\in \partial \Omega,} \ t>0
}
\end{subequations}
with initial conditions $w(\cdot,0)= w_0\in H_0^1(\Omega)$ and $w_t(\cdot,0)= w_1\in L^2(\Omega)$, where   $\chi_{\Omega_c}$ is the characteristic function of $\Omega_c$.
The wave equation can be recast as a control system of the form~\eqref{eq:BCS} with state $x(t)=(w(\cdot,t),w_t(\cdot,t))\tp$ on $X=H_0^1(\Omega)\times L^2(\Omega)$ with norm $\norm{(x_1,x_2)\tp}_X^2=\norm{\nabla x_1}_{L^2}^2 + \norm{x_2}_{L^2}^2$, and with $U=L^2(\Omega_c)$ and $Y=L^2(\Omega_o)$. Since the wave equation does not have boundary inputs, we let $U_b=\set{0}$, $\mcB=0$, and $Q=0$. Moreover, we define $\mcA=A= \pmatsmall{0&I\\ \Delta & 0}$ with $\Dom(A)=(H_0^1(\Omega)\cap H^2(\Omega))\times H_0^1(\Omega)$ and define $B_i\in \Lin (U,X)$ and $\mcC\in \Lin(X,Y)$ so that
$B_iu = (0,\chi_{\Omega_c}u)\tp$, $u\in U$, and
 $\mcC x=x_2\vert_{\Omega_o}$ for $x=(x_1,x_2)\tp\in X$.
The  controller in the following result stabilises the wave equation. Since the control region $\Omega_c$ does not satisfy the Geometric Control Condition, exponential stabilisation 
is not possible.

\begin{proposition}
\label{prp:2Dwaveres}
The observer based controller
\eq{
\hat w_{tt}(\xi,t) &= \Delta \hat w(\xi,t) - \chi_{\Omega_c}(\xi) \hat w_t(\xi,t) 
- \chi_{\Omega_o}(\xi)(\hat w_t(\xi,t)- w_t(\xi,t)), && \xi\in \Omega\\
\hat w(\xi,t) &= 0 ,  && \xi\in \partial \Omega \\
u(t) &= -\hat w_t(\cdot,t)\vert_{\Omega_c}
}
with initial conditions $\hat w(\cdot,0)\in H_0^1(\Omega)$ and $\hat w_t(\cdot,0)\in L^2(\Omega)$
 stabilises the wave equation~\eqref{eq:2Dwave} in such a way that the closed-loop semigroup is polynomially stable with exponent $\polpar=1/2$.
\end{proposition}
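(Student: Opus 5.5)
The plan is to identify the controller as an instance of \eqref{eq:controller} and then apply \cref{cor:ObsStabSGonly}(c) (or equivalently \cref{thm:ObsStab}(c)). Since $U_b=\set{0}$, $\mcB=0$ and $Q=0$, we take $L=0$. We choose $\mcK=-B_i^\ast\in\Lin(X,U)$, so that $\mcK x=-x_2\vert_{\Omega_c}$. We choose $L_i=-\mcC^\ast\in\Lin(Y,X)$, so that $L_i y=(0,-\chi_{\Omega_o}y)\tp$ with $y$ extended by zero. Substituting these into \eqref{eq:controller} with $u_1=u_2=0$ gives exactly the stated PDE controller:
\[
\hat w_{tt}=\Delta\hat w-\chi_{\Omega_c}\hat w_t-\chi_{\Omega_o}(\hat w_t-w_t),
\]
together with $u=-\hat w_t\vert_{\Omega_c}$.

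All of $B_i$, $\mcC$, $\mcK$ and $L_i$ are bounded, so every boundary node involved is well-posed (\cref{rem:DistributedInputImap}). The assumptions of \cref{thm:ObsStab} therefore hold. In particular, the transfer-function conditions hold for $\gb$ large, since $\norm{P_K(\gl)}$ and $\norm{P_L(\gl)}$ tend to zero as $\re\gl\to\infty$, so $I-P_K$ and $I-P_L$ have uniformly bounded inverses on a far enough right half-plane.

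The feedback operators are
\[
A_K=A-B_iB_i^\ast,\qquad A_L=A-\mcC^\ast\mcC,
\]
both with domain $\Dom(A)$. They are the damped wave operators
\[
A_K=\pmatsmall{0&I\\ \Delta&-\chi_{\Omega_c}},\qquad A_L=\pmatsmall{0&I\\ \Delta&-\chi_{\Omega_o}}.
\]
Both are dissipative bounded perturbations of the skew-adjoint $A$, so they generate contraction semigroups.

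Next comes the stability of each. For $T_L$, the damping region $\Omega_o$ contains a neighbourhood of the whole boundary $\partial\Omega$. Hence it satisfies the Geometric Control Condition: every generalised ray reflecting in the square eventually meets a neighbourhood of the boundary. By the Bardos--Lebeau--Rauch / Ralston result (equivalently, exact observability of $(\mcC,A)$ together with the Haraux/Liu equivalence between observability and exponential stability of $A-\mcC^\ast\mcC$), $T_L$ is exponentially stable. For $T_K$, the damping is localised on the interior square $\Omega_c$, which does not satisfy GCC: horizontal and vertical rays near the boundary miss $\Omega_c$. Here I would invoke known results for the damped wave equation on a square (torus-type or rectangle results of Anantharaman--Léautaud~\cite{AnaLea14}, or the observability-based criteria in~\cite{AnaLea14,JolLau20,ChiPau23}). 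These give $\norm{(is-A_K)\inv}\lesssim 1+\abs{s}^{2}$ for a damping region of this type, so $T_K$ is polynomially stable with exponent $1/2$ by~\citel{BorTom10}{Thm.~2.4}. Alternatively, one uses the known $t^{-1/2}$ decay for wave equations on rectangles where the damping region satisfies the nondegeneracy condition of~\cite{AnaLea14}. Before this, $i\R\subset\rho(A_K)$ follows from unique continuation: an eigenvector with $\hat w_t=0$ on $\Omega_c$ is a Dirichlet eigenfunction vanishing on an open set, hence zero.

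Finally, \cref{cor:ObsStabSGonly}(c), with $T_L$ exponentially stable and $T_K$ polynomially stable with exponent $1/2$, shows that the closed-loop semigroup $T_e$ is polynomially stable with exponent $\polpar=1/2$.

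The main obstacle is the rate for $T_K$. The GCC part for $T_L$ is classical. The $\abs{s}^2$ resolvent bound for interior damping on the square, however, must be quoted precisely from the literature, and I would check that $\Omega_c$, a centred subsquare with indicator damping, falls within the hypotheses of~\cite{AnaLea14}. If only a weaker bound were available, the exponent would change accordingly, by \cref{rem:NUStab}.
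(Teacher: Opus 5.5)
Your proposal is correct and matches the paper's proof. It uses the same choices $L=0$, $\KK=-B_i^\ast$, $L_i=-\CC^\ast$. It gets exponential stability of $T_L$ from exact observability of $(\CC,A)$; the paper cites \citel{TucWei09book}{Thm.~7.4.1} for this, which avoids appealing to BLR on a domain with corners. It takes the $t^{-1/2}$ rate for $T_K$ from \cite{AnaLea14} and concludes with \cref{cor:ObsStabSGonly}(c).

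The only thing to tighten is the reference for $T_K$. The paper cites \citel{AnaLea14}{Thm.~2.3 \& p.~165} directly for the square with this $\Omega_c$, and no extra nondegeneracy condition on the damping region is involved.
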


\begin{proof}
The controller has the form~\eqref{eq:controller} with $u_1\equiv 0$ and $u_2\equiv 0$, with $L=0$, and with $\KK\in \Lin(X,U)$ and $L_i\in \Lin(Y,X)$ defined by 
$\KK x= - x_2\vert_{\Omega_c}$, $x=(x_1,x_2)\tp\in X$, and $L_iy = (0,-\chi_{\Omega_o}y)\tp$, $y\in Y$. In particular, $\KK=-B_i^\ast$ and $L_i=-\CC^\ast$. We have from~\citel{AnaLea14}{Thm.~2.3 \& p. 165} that
the semigroup $T_K$ is polynomially stable with exponent $\polpar=1/2$. Moreover, the pair $(\CC,A)$ is exactly observable (in some time $\tau>0$) by~\citel{TucWei09book}{Thm.~7.4.1}, and therefore $T_L$ generated by $A_L= A+L_i\CC=A-\CC^\ast \CC$ is exponentially stable by~\citel{ChiPau23}{Thm.~3.2 \& p.~1104}. Since $\mcB=0$, $\mcC\in \Lin(X,Y)$ and $\mcK\in \Lin(X,U)$,
$(\mcB -Q\mcK ,\mcA + B_i\mcK,  \mcC, Q, B_i)$ and 
 $(\mcB-L\mcC ,\mcA + L_i\mcC,  \mcK, Q, B_i)$ are well-posed boundary nodes
and the claim follows from \cref{cor:ObsStabSGonly}.
\end{proof}

\begin{remark}
\label{rem:2Dwavedisk}
The geometry of the spatial domain has a direct effect on the stabilisation of the wave equation~\eqref{eq:2Dwave}. For instance, we may let $\Omega$ be the unit disk $\Omega=B(0,1)$ and define the control and observation regions  as $\Omega_c=B(0,1/2)$ and $\Omega_o=\Omega\setminus \Omega_c$. 
Then the pair $(\mcC,A)$ is again exactly observable and $(A,B_i)$ is only approximately controllable since $\Omega_c$ does not satisfy the Geometric Control Condition. The choices 
 $L=0$ and $L_i=-\mcC^\ast$ lead to an exponentially stable $T_L$ by~\citel{TucWei09book}{Thm.~7.4.1}. With the choice $\mcK=-B_i^\ast \in \Lin(X,U)$ the semigroup $T_K$ is non-uniformly stable and its resolvent is exponentially bounded on $i\R$ by~\citel{Leb96}{Thm.~1}.
By \cref{prp:BNcascade} and the proof of \cref{thm:ObsStab} the closed-loop resolvent  has an exponential growth bound on the imaginary axis, and therefore the classical solutions of the closed-loop system decay with logarithmic rates as $t\to\infty$ by~\citel{BatDuy08}{Thm.~1.5}.
\end{remark}

\subsection{A One-Dimensional Wave Equation}

We consider the stabilisation of the undamped wave equation on the interval $(0,1)$, 
\begin{subequations}
\label{eq:Wave1D}
\eqn{
w_{tt}(\xi,t) &= w_{\xi\xi}(\xi,t), \qquad & \xi\in (0,1), \ t>0\\
-w_\xi(0,t) &=  u(t), \qquad  w_\xi(1,t) = 0&  t>0\\
y(t) &= \pmat{w(1,t)\\w_t(1,t)}&  t>0
}
\end{subequations}
with initial conditions $w(\cdot,0)=  w_0\in H^1(0,1)$ and $w_t(\cdot,0) = w_1\in L^2(0,1)$. The two Neumann boundary conditions lead to linearly growing solutions of the uncontrolled equation.
Moreover, the system cannot be stabilised with static output feedback because its input and output are located at the opposite ends of the spatial interval. Our main result below introduces an observer-based stabilising controller.

\begin{proposition}
\label{prp:Wave1D}
For every $\ell_b>0$ there exists $\ell_i^\ast>0$ such that if $\kappa_0,\kappa_1,>0$ and $0<\ell_i\le\ell_i^\ast$, then
the observer-based controller
\begin{subequations}
\label{eq:Wave1Dcontr}
\eqn{
\hat w_{tt}(\xi,t) &=\hat  w_{\xi\xi}(\xi,t) - \ell_i(\hat w(1,t)- w(1,t) + u_2(t))\\
-\hat w_\xi(0,t) &=  u(t), \qquad \hat  w_\xi(1,t) = -\ell_b (\hat w_t(1,t)- w_t(1,t)+u_3(t))\\
u(t) &= -\kappa_0\hat  w(0,t) - \kappa_1\hat  w_t(0,t) - \kappa_0\kappa_1 \int_0^1\hat w_t(\xi,t)d\xi + u_1(t)
}
\end{subequations}
with initial conditions $\hat w(\cdot,0)=  \hat w_0\in H^1(0,1)$ and $\hat w_t(\cdot,0) = \hat w_1\in L^2(0,1)$
stabilises the wave equation~\eqref{eq:Wave1D} exponentially.
The closed-loop system with input $(u_1(t),u_2(t),u_3(t))\tp$ and output $(w_t(0,t),w_t(1,t),\hat w_t(0,t))\tp$  is externally well-posed in the sense that for every $t>0$ there exists $M_t>0$ such that 
the generalised solutions of the closed-loop system satisfy
\eq{
\MoveEqLeft[8]\Norm{\pmat{w(\cdot,t)\\  \hat w(\cdot,t)}}_{H^1}^2 
+ \Norm{\pmat{ w_t(\cdot,t)\\  \hat w_t(\cdot,t)}}_{ L^2}^2 
+ \Norm{\pmat{w_t(0,\cdot)\\ w_t(1,\cdot)\\ \hat w_t(0,\cdot)}}_{\Lp[2](0,t)}^2 \\
&\le  M_t \left(  
\Norm{\pmat{w_0\\  \hat w_0}}_{H^1}^2 
+ \Norm{\pmat{ w_1\\  \hat w_1}}_{ L^2}^2 
+ \Norm{\pmat{u_1\\ u_2\\u_3}}_{\Lp[2](0,t)}^2 
\right).
}
\end{proposition}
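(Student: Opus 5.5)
We will apply \cref{thm:ObsStab}. First we write~\eqref{eq:Wave1D} and~\eqref{eq:Wave1Dcontr} in the forms~\eqref{eq:plant} and~\eqref{eq:controller}.

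\emph{Setting.} Take the state $x=(w,w_t)\tp$ on $X=H^1(0,1)\times L^2(0,1)$, with an equivalent norm such as $\norm{x}^2=\norm{w'}_{L^2}^2+\abs{w(1)}^2+\norm{w_t}_{L^2}^2$. The operators are as follows.
\begin{itemize}
\item $\mcA=\pmatsmall{0&I\\ \partial_\xi^2&0}$ on $\Dom(\mcA)=H^2(0,1)\times H^1(0,1)$.
\item $U_b=\C^2$, $\mcB x=(-w'(0),w'(1))\tp$, $U=\C$, $Q=(1,0)\tp$ and $B_i=0$.
\item $Y=\C^2$ and $\mcC x=(w(1),w_t(1))\tp$.
\item $\mcK x=-\kappa_0w(0)-\kappa_1w_t(0)-\kappa_0\kappa_1\int_0^1w_t\,d\xi$.
\item $L y=(0,-\ell_b y_2)\tp$ and $L_iy=(0,-\ell_i y_1\mathbf 1)\tp$.
\end{itemize}
With these choices the external inputs $u_2,u_3$ play the role of the component $u_2\in Y$ in~\eqref{eq:controller}. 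The closed-loop outputs $w_t(0),w_t(1),\hat w_t(0)$ are obtained from $\mcC_e$ by discarding or adding bounded functionals, after splitting off the bounded terms in $\mcK$. External well-posedness of the closed loop and the stated estimate then follow from the well-posedness of $(\mcB_e,\mcA_e,\mcC_e,Q_e,B_{ei})$ and \cref{prp:WPBCSsolutions}.

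\emph{Well-posedness of the open-loop node.} We verify that $(\mcB,\mcA,\pmatsmall{\mcC\\ \mcK},[Q,L],[0,L_i])$ is well-posed. It suffices to check this for the boundary node with inputs $(-w'(0),w'(1))$ and outputs $(w_t(0),w_t(1))$, because the remaining output components $w(0)$, $w(1)$ and $\int_0^1 w_t$ are bounded on $X$ and $L_i$ is bounded. That node is the collocated Neumann wave node. Integration by parts gives $\re\iprod{\mcA x}{x}\le\re\iprod{\mcB x}{(w_t(0),w_t(1))}+c\norm{x}^2$, where the bounded term comes from the $\abs{w(1)}^2$ part of the norm. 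After an exponential shift, \cref{lem:IPBCSWP}(a) applies, and an explicit computation of $P(\gl)$ through $\cosh$ and $\sinh$ shows that it is bounded on a vertical line.

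\emph{Transfer function conditions.} Next we verify $(I-P_K)\inv,(I-P_L)\inv\in H_\infty(\C_\gb^+)$. The unbounded parts $-\kappa_1 w_t(0)$ and $-\ell_b w_t(1)$ are collocated dissipative feedbacks, so \cref{lem:IPBCSWP}(c) handles them. The remaining bounded parts contribute terms that decay like $(\re\gl)^{-1/2}$ by \cref{rem:WPbnodetransfun}. Choosing $\gb$ large therefore keeps the inverses uniformly bounded, via a Neumann series.

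\emph{Exponential stability of $T_K$ and $T_L$.} By \cref{thm:ObsStab}(a) it then remains to show that both semigroups are exponentially stable, and this is the main step.
\begin{itemize}
\item For $T_K$ the closed-loop boundary condition is $w'(0)=\kappa_0w(0)+\kappa_1w_t(0)+\kappa_0\kappa_1\int_0^1w_t$. We will look for a change of variables that turns this into a damped wave equation with a dissipative boundary condition. The candidate is $z=w+\kappa_1\int_0^1 w\,d\xi$, or equivalently an energy built from $\int_0^1 w_t$ and $w(0)$, which handles the zero mode. Failing that, we use the spectral approach: $A_K$ has compact resolvent, its eigenvalues solve an explicit characteristic equation, and the eigenvectors form a Riesz basis. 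We show that no root lies in $\overline{\C_0^+}$ for $\kappa_0,\kappa_1>0$, and that asymptotically $\re\gl\to-\tfrac12\ln\bigl|\tfrac{1+\kappa_1}{1-\kappa_1}\bigr|<0$ (the damping constant $\kappa_1$ being suitably normalised).
\item For $T_L$, take first $\ell_i=0$. Then $A_L$ is the wave operator with boundary damping $w'(1)=-\ell_bw_t(1)$ and Neumann condition at $0$. It is exponentially stable on the complement of the one-dimensional kernel spanned by the constants, with $0$ a simple eigenvalue. The term $-\ell_i(w(1),\cdot)\mathbf 1$ is a bounded rank-one perturbation. Standard perturbation theory shows that for small $\ell_i>0$ the remaining spectrum stays in a half-plane $\re\gl\le-\gd$, with resolvent bounds uniform in $\ell_i$ away from $0$. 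For the moved eigenvalue, a first-order expansion using the eigenprojection $\langle\cdot,\psi_0\rangle\phi_0$ gives $\gl(\ell_i)\approx-\ell_i/(1+\ell_b)<0$. The Gearhart--Pr\"uss theorem then yields exponential stability for $0<\ell_i\le\ell_i^\ast$.
\end{itemize}

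We expect the $T_K$ stability to be the main obstacle. The integral term in the feedback must be shown to stabilise the zero mode without destroying the boundary damping, and that requires the correct Lyapunov functional or an explicit root-locus argument.
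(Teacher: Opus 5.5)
\emph{Gap: exponential stability of $T_K$.} Your reduction to \cref{thm:ObsStab} mirrors the paper's. The proof is incomplete, however, exactly where you expected: exponential stability of $T_K$ is never established.

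Your candidate $z=w+\kappa_1\int_0^1 w\,d\xi$ fails. It only adds a multiple of the mean, so $z_\xi=w_\xi$. Since $\frac{d^2}{dt^2}\int_0^1 w=w_\xi(1)-w_\xi(0)=-w_\xi(0)$, it yields the nonlocal equation $z_{tt}=z_{\xi\xi}-\kappa_1 w_\xi(0)$, and the boundary condition at $\xi=0$ is not made dissipative.

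The spectral fallback is only asserted, and its Riesz-basis claim fails for $\kappa_1=1$, which the statement allows. The characteristic equation of $A_K$ is $\gl(\sinh\gl+\kappa_1\cosh\gl)+\kappa_0(\cosh\gl+\kappa_1\sinh\gl)=0$. For $\kappa_1=1$ it collapses to $(\gl+\kappa_0)e^{\gl}=0$, so $A_K$ has the single eigenvalue $-\kappa_0$.

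The missing idea, which the paper uses by citing \citel{KrsSmy08book}{Sec.~7.2}, is that $\mcK$ is exactly the backstepping law for a string with a free end at $\xi=1$. Apply the Volterra map $z(\xi)=w(\xi)+\kappa_0\int_\xi^1 w(s)\,ds$ to both state components; it is boundedly invertible on $H^1(0,1)\times L^2(0,1)$. Using $w_\xi(1)=0$ it gives $z_{tt}=z_{\xi\xi}$ and $z_\xi(1)=-\kappa_0 z(1)$. At the other end, $z_\xi(0)=w_\xi(0)-\kappa_0 w(0)=\kappa_1\bigl(w_t(0)+\kappa_0\int_0^1 w_t\,d\xi\bigr)=\kappa_1 z_t(0)$. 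The energy $\frac12(\norm{z_\xi}^2+\norm{z_t}^2+\kappa_0\abs{z(1)}^2)$ is then an equivalent norm whose derivative is $-\kappa_1\abs{z_t(0)}^2$. Exponential stability of this wave equation, with a Robin condition at one end and boundary damping at the other, is classical for all $\kappa_0,\kappa_1>0$, including $\kappa_1=1$.

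\emph{Well-posedness step.} With the norm $\norm{w'}^2+\abs{w(1)}^2+\norm{w_t}^2$, integration by parts produces the extra term $\re\bigl(w_t(1)\conj{w(1)}\bigr)$. This involves a point value of $w_t$ and is not bounded by $c\norm{x}^2$: take $w\equiv 1$ and $w_t$ small in $L^2$ but large at $\xi=1$.

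Replace $\abs{w(1)}^2$ by $\abs{\int_0^1 w}^2$. The extra term becomes $\re\bigl(\int_0^1 w_t\,\conj{\int_0^1 w}\bigr)\le\norm{x}^2$. Your shift argument with \cref{lem:IPBCSWP}(a) then works, as a lighter alternative to the paper's splitting $X=\C\times V\times L^2(0,1)$ combined with \cref{prp:CascCoupWP}(a).

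\emph{The $T_L$ step.} Your perturbation route is a valid, more conceptual substitute for the paper's Schur-complement and Rouch\'e computation. It rests on three facts: the domain $\ker(\mcB-L\mcC)$ does not depend on $\ell_i$; at $\ell_i=0$ the eigenvalue $0$ is algebraically simple because $\ell_b>0$ breaks the Jordan chain; and Kato perturbation of a simple eigenvalue combined with Gearhart--Pr\"uss finishes the argument.

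Your first-order coefficient is wrong, though. The spectral projection is given by the conserved quantity $\int_0^1 w_t+\ell_b w(1)$, which yields $\gl(\ell_i)=-\ell_i/\ell_b+O(\ell_i^2)$, not $-\ell_i/(1+\ell_b)$. This agrees with the characteristic equation $(\gl^2+\ell_i)\sinh\gl+\ell_b\gl^2\cosh\gl=0$. Only the sign matters, so this step survives.
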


\begin{proof}
The natural state space of the wave equation is $H^1(0,1)\times L^2(0,1)$. However, to deal with the generalised eigenvector associated to the spectral point $0$, we choose the states $x(t)$ and $\hat x(t)$ of the system and the controller as $x(t)=(\int_0^1 w(\xi,t)d\xi , w(\cdot,t)-\int_0^1 w(\xi,t)d\xi , w_t(\cdot,t))\tp$ and $\hat x(t)=(\int_0^1 \hat w(\xi,t)d\xi , \hat w(\cdot,t)-\int_0^1 \hat w(\xi,t)d\xi , \hat w_t(\cdot,t))\tp$ on the space $X=\C\times V \times L^2(0,1)$ with $V=\setm{f\in H^1(0,1)}{\int_0^1f(\xi)d\xi=0}$. 
We equip $X$ with the norm $\norm{(x_1,x_2,x_3)\tp}_X^2= \abs{x_1}^2 + \norm{x_2'}_{L^2(0,1)}^2+ \norm{x_3}_{L^2(0,1)}^2$.
We can recast the closed-loop system consisting of~\eqref{eq:Wave1D} and~\eqref{eq:Wave1Dcontr} with input
$u_e(t)=(u_1(t),u_2(t),u_3(t))\tp$ and output $y_e(t)=(w_t(0,t),w_t(1,t),\hat w_t(0,t))\tp$
as a boundary control system of the form~\eqref{eq:ObsCLsysExtIO}
on $X_e=X\times X$, $U=\C$, $Y=\C^2$, and $U_b=\C^2$
with operators of the form~\eqref{eq:ObsCLsysExtIO_operators} if we define
 $\AA : \Dom(\AA)\subset X\to X$, $\BB\in \Lin(\Dom(\AA),U_b)$, $\CC\in \Lin(\Dom(\AA),Y)$, $\KK\in \Lin(\Dom(\AA),U)$, $Q\in \Lin(U,U_b)$, $B_i\in \Lin(U,X)$, $L\in \Lin(Y,U_b)$, and $L_i\in \Lin(Y,X)$ by
\eq{
\AA x &= \pmat{\int_0^1 x_3(\xi)d\xi\\ x_3-\int_0^1 x_3(\xi)d\xi\\ x_2'' } , \quad
\BB x = \pmat{-x_2'(0)\\ x_2'(1)}, \quad
L_i = \pmat{0&0\\ 0& 0\\ -\ell_i \bm 1& 0}\\
\CC x &= \pmat{x_1+x_2(1)\\x_3(1)}, \quad
\KK x = -\kappa_0 (x_1 + x_2(0)) - \kappa_1 x_3(0) -\kappa_0 \kappa_1 \iprod{x_3}{\bm 1}_{\Lp[2]}
}
for $x=(x_1,x_2,x_3)\tp \in\Dom(\AA) := \C \times (H^2(0,1)\cap V)\times H^1(0,1)$
and
$B_i=0\in \Lin(U,X)$, $Q=\pmatsmall{1\\0}$, and $L = \pmatsmall{0& 0\\ 0&-\ell_b }$.
Employing an invertible change of coordinates $(x_1,x_2,x_3)\tp \to (x_2+x_1\cdot \bm{1},x_3)\tp$ and equivalence of norms, we have from~\citel{CurZwa20book}{Ex.~3.2.17} that $A=\AA\vert_{\ker(\BB)}$ generates a semigroup $T$ on $X$ with growth bound $\gw_0(T)=0$. 
The same equivalence of the norms shows that the claims of our result follow from \cref{thm:ObsStab} once we show that
$(\mcB ,\mcA,  \pmatsmall{\mcC\\ \mcK}, [Q,L], [B_i,L_i])$ is a well-posed boundary node on $\BNsp{U_b}{U_e}{X}{Y_e}$,
 that 
$(I-P_K(\cdot))\inv \in H_\infty(\C_{\gb}^+;\Lin(U))$ and $(I-P_L(\cdot))\inv \in H_\infty(\C_{\gb}^+;\Lin(Y))$ 
for some $\gb>0$,
and that the semigroups generated by $A_K:=\mcA\vert_{\ker(\mcB-Q\mcK)}$ and $A_L:=(\mcA+L_i\mcC)\vert_{\ker(\mcB-L\mcC)}$ are exponentially stable.

We begin by showing that $(\mcB ,\mcA,  \mcC_b, I, 0)$ is a well-posed boundary node on $\BNsp{U_b}{U_b}{X}{U_b}$, where $\CC_b\in \Lin(\Dom(\AA),U_b)$ is defined by $\CC_bx = (x_3(0),x_3(1))\tp$ for $x=(x_1,x_2,x_3)\tp\in \Dom(\AA)$.
We have $X=\C\times X_0$ with $X_0=V\times L^2(0,1)$ and with norm $\norm{(x_2,x_3)\tp}_{X_0}^2=  \norm{x_2'}_{L^2(0,1)}^2+ \norm{x_3}_{L^2(0,1)}^2$, and 
\eq{
\AA = \pmat{0&C_1\\0&\AA_0}, \qquad \BB = \pmat{0,\ \BB_0}, \quad \mbox{and} \quad \CC_b = \pmat{0, \ \CC_0},
}
with $\AA_0: \Dom(\AA_0)\subset X_0\to X_0$, $\BB_0,\CC_0\in \Lin(\Dom(\AA_0),U_b)$, and $C_1\in \Lin(X_0,\C)$ defined by
$ C_1x=  \int_0^1x_3(\xi)d\xi$,
\eq{
\AA_0 x &= \pmat{ x_3-\bm 1 \int_0^1 x_3(\xi)d\xi\\ x_2'' }, \quad
\BB_0 x = \pmat{-x_2'(0)\\ x_2'(1)}, \quad \mbox{and} \quad
\CC_0 x = \pmat{x_3(0)\\ x_3(1)}
}
for 
$x=(x_2,x_3)\tp \in\Dom(\AA_0) := (H^2(0,1)\cap V)\times H^1(0,1)$.
To show that $(\BB_0,\AA_0,\CC_0,I,0)$ is a well-posed boundary node on $\BNsp{\C^2}{\C^2}{X_0}{\C^2}$ we note that 
for $x=(x_2,x_3)\tp\in \Dom(\AA_0)$ integration by parts yields
\eq{
\re \iprod{\AA_0 x}{x}_{X_0}
&= \re \left[ \iprod{x_3'}{x_2'}_{L^2} + \iprod{x_2''}{x_3}_{L^2} \right]
= \re \left[ x_2'(1) \conj{x_3(1)} - x_2'(0) \conj{x_3(0)} \right]\\
&=\re \iprod{\BB_0 x}{\CC_0x}_{\C^2}.
}
Since $A=\AA\vert_{\ker(\BB)}$ generates a semigroup on $X$, the structures of $\AA$ and $\BB$ imply that $A_0:=\AA_0\vert_{\ker(\BB_0)}$ satisfies $\ran(\gl-A_0)=X_0$ for some $\gl>0$. 
Thus $A_0$ is maximally dissipative, and it generates a contraction semigroup on $X_0$ by the Lumer--Phillips theorem. Moreover, $\BB_0$ is clearly surjective, and since its codomain is finite-dimensional, $\BB_0$ has a bounded right inverse $\BB_0^r\in \Lin(U_b,\Dom(\AA_0))$. Thus $(\BB_0,\AA_0,\CC_0,I,0)$ is a boundary node on $\BNsp{U_b}{U_b}{X_0}{U_b}$.
We will show that its transfer function $P_0$ satisfies $\sup_{s\in\R}\norm{P_0(\gw+is)}<\infty$ for some $\gw>0$, which will imply well-posedness by 
\cref{lem:IPBCSWP}(a).
For $\gl\in\C_0^+$ and
 $u=(u_1,u_2)\tp\in \C^2$  we have $P_0(\gl)u=\CC_0x$, where
$x\in \Dom(\AA_0)$ is  such that $(\gl-\AA_0)x=0$ and $\BB_0x=u$.
Denoting
 $x=(x_2,x_3)\tp\in X_0$ and $u=(u_1,u_2)\tp\in \C^2$ and defining 
$z_2=x_2+\gl\inv \iprod{\bm{1}}{x_3}\bm{1}\in H^2(0,1)$ we have
\eq{
&\begin{cases}
(\gl-\AA_0)x = 0\\
\BB_0 x= u
\end{cases}
 \Leftrightarrow \quad
\begin{cases}
\gl z_2 - x_3  = 0\\
\gl x_3 - z_2''  = 0\\
-z_2'(0)= u_1, \quad z_2'(1)=u_2.
\end{cases}
}
Solving the last system of equations for $(z_2,x_3)\in H^2(0,1)\times H^1(0,1)$ and recalling that $P_0(\gl)u = (x_3(0),x_3(1))\tp$ shows that
\eq{
P_0(\gl) = \pmat{\frac{1}{\tanh(\gl)} & \frac{1}{\sinh(\gl)}\\
\frac{1}{\sinh(\gl)}&\frac{1}{\tanh(\gl)}}.
}
Since $\sup_{s\in\R}\norm{P_0(\gw+is)}<\infty$ for $\gw>0$,  $(\BB_0,\AA_0,\CC_0,I,0)$ is well-posed by \cref{lem:IPBCSWP}(a).
Moreover, since $C_1\in \Lin(X_0,\C)$,  \cref{prp:CascCoupWP}(a) implies that 
 $(\mcB ,\mcA,  \mcC_b, I, 0)$ is a well-posed boundary node on $\BNsp{U_b}{U_b}{X}{Y_e}$.
Finally, 
\eq{
\CC = \pmat{C_2 \\ [0,1]\CC_b} \quad \mbox{and} \quad \KK = [-\kappa_1,0] \CC_b + K_1,
}
where $C_2\in \Lin(X,\C)$ and $K_1 \in \Lin(X,U)$ are defined by $C_2x= x_1+ x_2(1)$ and
 $K_1 x = -\kappa_0 (x_1 + x_2(0)) - \kappa_0\kappa_1 \iprod{x_3}{\bm 1}_{L^2}$ for $x=(x_1,x_2,x_3)\tp\in X$.
These properties imply that 
$(\mcB ,\mcA,  \pmatsmall{\mcC\\ \mcK}, [Q,L], [B_i,L_i])$ is a well-posed boundary node on $\BNsp{U_b}{U_e}{X}{Y_e}$. 

We will next analyse the transfer functions  $P_K$ and $P_L$ in \cref{thm:ObsStab}. 
If we denote the transfer functions of $(\mcB ,\mcA,  \mcC_b, I, 0)$ by $H_b$ and $P_b$, then a direct computation shows that
 $P_b(\gl)=P_0(\gl)$,
$P_K(\gl) = [-\kappa_1,0]P_0(\gl) Q + K_1 H_b(\gl)Q $,
and 
\eq{
P_L(\gl) &=  \pmat{C_2H_b(\gl)\\ [0,1]P_0(\gl)}L + \CC (\gl-A)\inv L_i
}
 for all $\gl\in \C_0^+$.
Since $(\mcB ,\mcA,  \mcC_b, I, 0)$ is well-posed and $\gw_0(T)=0$, \cref{rem:WPbnodetransfun} implies that $\norm{H_b(\gl)}\le M_1 / \sqrt{\re \gl-1}$ and $\norm{\CC(\gl-A)\inv}\le M_2 / \sqrt{\re \gl-1}$ for some constants $M_1,M_2>0$ and for all $\gl\in \C_1^+$.
Because of this and the structures of $Q$ and $L$,
$(I-P_K(\cdot))\inv \in H_\infty(\C_\gb^+; \Lin(U))$ and
 $(I-P_L(\cdot))\inv \in H_\infty(\C_\gb^+; \Lin(Y))$ for some $\gb>1$ if the values of $\gl \mapsto 1+ \kappa_1/\tanh(\gl)$ and $\gl \mapsto 1+ \ell_b/\tanh(\gl)$ are uniformly bounded away from zero on some right half-plane.  
But this is true since $\re (1/\tanh(\gl))\ge 0 $ for all $\gl\in \C_0^+$.

Finally, we will analyse the semigroups $T_K$ and $T_L$ generated by $A_K=\mcA\vert_{\ker(\mcB-Q\mcK)}$ and $ A_L = (\AA+L_i\CC)\vert_{\ker(\BB-L\CC)}$, respectively. 
Exponential stability of $T_K$ follows from~\citel{KrsSmy08book}{Sec.~7.2}.
On the other hand, for a fixed $\ell_b>0$ we have
\eq{
A_L 
= \pmat{0&C_1   \\  L_{i0}& A_{L0}+ L_{i0}C_{d}},
}
where $A_{L0}=\AA_0\vert_{\ker(\BB_0-L\CC_0)}$, $L_{i0}=\pmatsmall{0\\-\ell_i}$, and  $C_{d}(x_2,x_3)\tp = x_2(1)$ for $(x_2,x_3)\tp \in X_0$.
Since $A_{L0}$ generates an exponentially stable semigroup by~\citel{CoxZua95}{Thm. 10.1} and since
$C_d\in \Lin(X_0,\C)$ and $\norm{L_{i0}}=\ell_i$, analysis of the block operator $A_L$ 
 can be used to show that there exists $\ell_i^\ast>0$ such that $T_L$ is exponentially stable for every $\ell_i\in \zabr{0}{\ell_i^\ast}$.
Crucially, explicit computations and Rouch\'e's theorem can be used to show that for all sufficiently small $\ell_i>0$ the 
Schur complement $\gl\mapsto S(\gl)=
\gl -C_1 (\gl-A_{L0}-L_{i0}C_d)\inv  L_{i0}C_{d1}
\in\C$ of $\gl-A_{L0}-L_{i0}C_d$ in the block operator $\gl-A_L$ only has zeros with negative real parts, and that its inverse is uniformly bounded on $\C_0^+$.
Together with \cref{thm:ObsStab} this completes the proof.
\end{proof}

\subsection{The SCOLE Model}
In this section we stabilise the SCOLE model~\cite{LitMar88b} which consist of an Euler--Bernoulli beam coupled to an ordinary differential equation modeling the dynamics of a tip mass.
Our equation 
\begin{align}\label{eq:scole}
\begin{cases}
\rho(\xi) w_{tt}(\xi,t)=-(EIw_{\xi\xi})_{\xi\xi}(\xi,t), \quad \xi\in (0,1), t>0\\
w(0,t)=0, \quad w_\xi(0,t)=0\\
mw_{tt}(1,t)-(EIw_{\xi\xi})_\xi(1,t)=0\\
Jw_{\xi tt}(1,t)+EI(1)w_{\xi\xi}(1,t)=u(t)\\
y(t)=EI(1)w_{\xi\xi}(1,t)
\end{cases}
\end{align}
with initial data $w(\cdot,0)\in H^2_\ell(0,1):= \setm{ f\in H^2(0,1)}{ f(0)=f'(0)=0 }$, $w_t(\cdot,0)\in L^2(0,1)$, $w_t(1,0)\in \C$, and $w_{\xi t}(1,0)\in \C$
can be used as a simplified model for the dynamics of a monopile wind turbine tower with a heavy nacelle~\cite{ZhaWei11c}. The torque input $u(t)$ acts on the tip mass and the output measures the bending moment at $\xi=1$.
The spatially varying parameters $\rho,EI\in C^4([0,1])$ are strictly positive and $m > 0$ and $J > 0$.

We will rewrite~\eqref{eq:scole} as a boundary control system of the form~\eqref{eq:plant}
with state $x(t)=(w(\cdot,t), w_t(\cdot,t), w_t(1,t), w_{\xi t}(1,t))\tp$
 on the
Hilbert space $X=H^2_\ell(0,1)\times L^2(0,1)\times \C ^2$. We equip $X$ with the norm
\eq{
    \|x\|_X^2=\int_0^1 EI(\xi)|w''(\xi)|^2d\xi+\int_0^1 \rho(\xi)|v(\xi)|^2d\xi+m|p|^2+J|q|^2}
for $x=(w,v,p,q)\tp\in X$.
Moreover, we define $U=\C$, $Y=\C$, and $U_b=\C$ and define  $\AA: \Dom(\AA)\subset X\to X$, $\BB\in \Lin(\Dom(\AA),U_b)$, and $\CC\in \Lin(\Dom(A),Y)$ by $\BB x=v'(1)-q$, $\CC x=EI(1)w''(1)$, and
\eq{
    \mathfrak A x=\begin{bmatrix}
        v\\ 
        -\rho^{-1}(EI w'')''\\ 
        m^{-1}(EI w'')'(1)\\ 
        -J^{-1}EI(1)w''(1)
    \end{bmatrix}
}
for
$x=(w,v,p,q)\tp\in \Dom(\AA)
    :=\setm{(w,v,p,q)\tp\in(H^2_\ell(0,1)\cap H^4(0,1))\times H^2_\ell(0,1)\times \mathbb{C}^2}{  p=v(1)}$.
Since the control acts through the finite-dimensional part of the model, we choose $Q=0\in \Lin(U,U_b)$ and define $B_i\in \Lin(U,X)$ by $B_iu=(0, 0, 0,  J^{-1}u )\tp$, $u\in U$.
With these choices~\eqref{eq:scole} has the form~\eqref{eq:plant}. Our first result establishes the well-posedness of the system.

\begin{proposition}\label{prp:scoleWP}
$(\mathfrak B,\mathfrak A,\mathfrak{C}, I, 0)$ is a well-posed boundary node on the Hilbert spaces  $\BNsp{U_b}{U}{X}{Y}$ and 
$\re\langle\mathfrak A x,x\rangle_X=\re\langle\mathfrak Bx,\mathfrak Cx\rangle_\C$
for all $x\in D(\mathfrak A)$.
\end{proposition}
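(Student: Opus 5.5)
The proof has two parts: the energy identity by integration by parts, then well-posedness via \cref{lem:IPBCSWP}(a).

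\textbf{Energy identity.} Let $x=(w,v,p,q)\tp\in\Dom(\AA)$. Then
\eq{
\iprod{\AA x}{x}_X = \int_0^1 EI\, v''\conj{w''}\,d\xi - \int_0^1 (EIw'')''\conj{v}\,d\xi + (EIw'')'(1)\conj{p} - EI(1)w''(1)\conj{q}.
}
Integrate the second integral by parts twice, using $v(0)=v'(0)=0$. This gives
\eq{
-\int_0^1 (EIw'')''\conj v\,d\xi = -(EIw'')'(1)\conj{v(1)} + EI(1)w''(1)\conj{v'(1)} - \int_0^1 EIw''\conj{v''}\,d\xi.
}
The two remaining integrals are complex conjugates of each other up to sign, so their real parts cancel. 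Since $p=v(1)$, the term $(EIw'')'(1)\conj p$ cancels $-(EIw'')'(1)\conj{v(1)}$. What remains is
\eq{
\re\iprod{\AA x}{x}_X=\re\, EI(1)w''(1)\,\conj{(v'(1)-q)}=\re\iprod{\BB x}{\CC x}_\C .
}

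\textbf{Boundary node property.} Clearly $\BB$ and $\CC$ are bounded on $\Dom(\AA)$ with the graph norm. For the right inverse, pick $\phi\in H^2_\ell(0,1)\cap C^\infty$ with $\phi(1)=0$ and $\phi'(1)=1$, and set $\BB^r u=(0,u\phi,0,0)\tp$. This lies in $\Dom(\AA)$ and satisfies $\BB\BB^r=I$. By the energy identity, $A=\AA\vert_{\ker(\BB)}$ is dissipative. It remains to show $\ran(\gl-A)=X$ for some $\gl>0$, after which Lumer--Phillips gives that $A$ generates a contraction semigroup; $A$ is exactly the standard SCOLE generator. To solve $(\gl-A)x=f$, eliminate $v=\gl w-f_1$. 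The constraints $p=v(1)$ and $q=v'(1)$ turn the tip-mass equations into the boundary conditions
\eq{
m\gl^2 w(1)-(EIw'')'(1)=\dots, \qquad J\gl^2 w'(1)+EI(1)w''(1)=\dots
}
for the fourth-order problem $\gl^2\rho w+(EIw'')''=\dots$ with $w(0)=w'(0)=0$. Its weak form is the coercive sesquilinear form
\eq{
\int_0^1 EIw''\conj{\psi''}\,d\xi+\gl^2\int_0^1\rho w\conj\psi\,d\xi+\gl^2 m\, w(1)\conj{\psi(1)}+\gl^2 J\, w'(1)\conj{\psi'(1)}
}
on $H^2_\ell(0,1)$. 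Lax--Milgram gives a weak solution, and elliptic regularity with $EI,\rho\in C^4$ yields $w\in H^4$. This step is routine but should be spelled out, or cited from the SCOLE literature.

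\textbf{Well-posedness.} By \cref{lem:IPBCSWP}(a), it suffices to show $\sup_{s\in\R}\abs{P(\gw+is)}<\infty$ for some $\gw>0$. This is the main obstacle. The transfer function solves
\eq{
\gl^2\rho w+(EIw'')''=0,\qquad w(0)=w'(0)=0,\qquad m\gl^2w(1)=(EIw'')'(1),
}
\eq{
\gl w'(1)-q=u,\qquad J\gl q+EI(1)w''(1)=0,
}
with $P(\gl)u=EI(1)w''(1)$. Eliminating $q$ gives $P(\gl)u=-J\gl q=-J\gl(\gl w'(1)-u)$.

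First attempt: exploit the finite-dimensional structure, avoiding the PDE for the bound. Set $q=J^{-1}\xi$ and let $G(\gl)$ be the map from the moment $M$ applied at the tip to the resulting $w'(1)$ (the beam with tip mass, clamped at $0$). By impedance passivity, $\gl G(\gl)$ has nonnegative real part on $\C_0^+$. A short computation expresses $P$ as
\eq{
P(\gl)=\frac{J\gl}{1+J\gl^2 G(\gl)}.
}
The estimate of $G$ then needs high-frequency asymptotics. WKB or Liouville transformation of $\rho w_{tt}=-(EIw'')''$ should give $\gl G(\gl)\sim c\,\gl^{-1/2}$ on vertical lines, which makes $\abs{P(\gw+is)}$ bounded, and in fact decaying like $\abs{s}^{-1/2}$.

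Fallback if the asymptotics are too delicate: prove well-posedness directly via an energy/multiplier estimate for the output $EI(1)w''(1)$. A multiplier $\xi w_\xi$-type identity gives $\int_0^\tau\abs{EI(1)w''(1,t)}^2dt\lesssim E(0)+\int_0^\tau\abs{u}^2dt$, since the tip inertia $J$ smooths the moment term.
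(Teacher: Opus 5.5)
Your energy identity is correct. Your boundary-node argument is also fine: the explicit right inverse, plus Lumer--Phillips with a Lax--Milgram range argument, replaces the paper's citation of Guo's result that $A$ is skew-adjoint. The gap is the only non-routine step, $\sup_{s\in\R}\abs{P(\omega+is)}<\infty$, and the route you sketch for it does not work.

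\textbf{The first attempt fails.} As written, $\lambda G(\lambda)\sim c\lambda^{-1/2}$ gives $1+J\lambda^2G(\lambda)\sim Jc\lambda^{1/2}$, hence $P(\lambda)\sim c^{-1}\lambda^{1/2}$, which is unbounded. Scaling (wave number $\sim\abs{\lambda}^{1/2}$, moment $\sim\abs{\lambda}w$, rotation $\sim\abs{\lambda}^{1/2}w$) actually gives $G\sim c\lambda^{-1/2}$.

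Even this corrected power law holds only where waves are attenuated across the beam, e.g.\ in sectors of $\C_0^+$. It fails on a fixed line $\re\lambda=\omega$. There the oscillatory modes decay only at the spatial rate $O(\omega/\sqrt{\abs{s}})$, so reflections from the clamped end $\xi=0$ are undamped. As a result, $1+J\lambda^2G$ has zeros at the eigenvalues $i\mu_k$ of $A$, at distance $\omega$ from the line.

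In fact $P$ does not decay at all on vertical lines. From the expansion below,
\[
\re P(\omega+is)=\sum_{j\in\Z}\frac{\omega\abs{\gamma_j}^2}{\omega^2+(s-\mu_j)^2},
\]
so $\re P(\omega+i\mu_k)\ge\abs{\gamma_k}^2/\omega$. Moreover, $\abs{\gamma_k}$ stays bounded away from zero, as it must, since the collocated injection makes $T_L$ exponentially stable in \cref{prp:scoleMain}. Passivity, $\re\lambda G(\lambda)\ge0$, gives no lower bound on $\abs{1+J\lambda^2G(\lambda)}/\abs{\lambda}$ near those zeros. So some quantitative spectral information cannot be avoided.

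\textbf{The fallback is not an argument.} The multiplier estimate is only asserted. With the input term included, it is precisely the well-posedness estimate you are trying to prove. With non-constant $\rho$, $EI$ and the tip-mass boundary terms, carrying out the multiplier computation is the whole difficulty, and ``the tip inertia smooths the moment term'' does not replace it.

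\textbf{What is missing} is the spectral input the paper takes from Guo's analysis of the non-uniform SCOLE model:
\begin{itemize}
\item $A$ has an orthogonal eigenbasis $(\phi_k)_{k\in\Z}$ with $A\phi_k=i\mu_k\phi_k$;
\item $a\abs{k}-b\le\sqrt{\abs{\mu_k}}\le a\abs{k}+c$ for large $\abs{k}$;
\item $\sup_k\abs{\gamma_k}<\infty$, where $\gamma_k=\CC\phi_k$.
\end{itemize}
The paper tests the polarized identity \eqref{eq:integrationbypart} against $\phi_k$. It then shifts by $e_u=(0,0,0,u)$, so that $x+e_u\in\Dom(A)$ with the same output. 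This gives
\[
P(\lambda)=\sum_{k\in\Z}\abs{\gamma_k}^2\Bigl(\frac{1}{\lambda-i\mu_k}-\frac{i}{\mu_k}\Bigr).
\]
The growing spectral gaps together with $\sup_k\abs{\gamma_k}<\infty$ make this bounded on every vertical line $\re\lambda=\omega>0$, via Rebarber's theorem. Then \cref{lem:IPBCSWP}(a) gives well-posedness.
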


\begin{proof}
By \cite[Eq. (22) and the subsequent paragraph]{Guo02c} the operator $A:=\mathfrak A\vert_{\ker(\BB)}$
is skew-adjoint and therefore generates a unitary group
on $X$.
 Moreover, we have $\mathfrak B,\mathfrak C\in\mathcal L(D(\mathfrak A),\C )$ and since
 $U_b$ is finite-dimensional, $\BB$ has a right inverse $\BB^r\in \Lin(U_b,\Dom(\AA))$. 
Therefore $(\mathfrak B,\mathfrak A,\mathfrak C,I,0)$ is a boundary node on $\BNsp{U_b}{U}{X}{Y}$.
For  $x_1=(w_1,v_1,p_1,q_1)\tp\in D(\mathfrak A)$ and
$x_2=(w_2,v_2,p_2,q_2)\tp\in D(\mathfrak A)$
 integrating by parts shows that
\begin{align}\label{eq:integrationbypart}
    \begin{aligned}
        \langle \mathfrak A x_1,x_2\rangle_X+ \langle x_1,\mathfrak A x_2\rangle_X&=\bigl(v_1'(1)-q_1\bigr)\overline{EI(1)w_2''(1)}  \\
            &\qquad+EI(1)w_1''(1)\overline{(v_2'(1)-q_2)}.
    \end{aligned}
\end{align}
Since $\mathfrak Bx_1=v_1'(1)-q_1$ and $\mathfrak Cx_1=EI(1)w_1''(1)$, setting
$x_2=x_1$ yields  the claimed identity
$\re\langle\mathfrak A x,x\rangle_X
=\re\langle\mathfrak Bx,\mathfrak Cx\rangle_\C$, $x\in \Dom(\AA)$.
Moreover, \cref{lem:IPBCSWP}(a) shows that 
 $(\mathfrak B,\mathfrak A,\mathfrak C,I,0)$ is well-posed provided that its transfer function $P$ satisfies
$\sup_{s\in\R}|P(\gw+is)|<\infty$ for some $\gw>0$.

For $\gl\in\C_0^+$ we have 
$P(\lambda)u=\mathfrak Cx$, where 
$x\in D(\mathfrak A)$ satisfies $(\lambda -\AA)x=0$ and
$\mathfrak Bx=u$. If we define $e_u=(0,0,0,u)\tp\in \Dom(\AA)$ and $z=x+e_u$, then $z\in \ker(\BB)=\Dom(A)$ and $\CC z=\CC x$.
By \cite[Lem.~3.1 and the subsequent paragraph]{Guo02c} the
eigenvectors of $A$ can be chosen to form an orthogonal basis
$(\phi_k)_{k\in\Z}$ of $X$. We write
$A\phi_k=i\mu_k\phi_k$, $i\mu_k\in i\R\setminus \set{0}$, and denote 
$\gamma_k:=\mathfrak C\phi_k$. 
Setting $x_1=x$ and $x_2=\phi_k\in D(A)$ in
\eqref{eq:integrationbypart} yields
\eq{
    \langle \mathfrak A x,\phi_k\rangle_X
    +
    \langle x,A\phi_k\rangle_X
    =
    \langle \mathfrak Bx,\mathfrak C\phi_k\rangle_\C = u \conj{\gg_k}.
}
Using $\AA x=\gl x$ and $A\phi_k=i\mu_k\phi_k$ implies $\langle x,\phi_k\rangle_X=\overline{\gamma_k}u/(\lambda-i\mu_k)$.
Similarly, the identity~\eqref{eq:integrationbypart} applied with $x_1=e_u\in \Dom(\AA)$ and $x_2=\phi_k\in D(A)$ together with $\AA e_u=0$ and $\BB e_u=-u$ shows that $\langle e_u,\phi_k\rangle_X=-i\overline{\gamma_k}u/\mu_k$. Since 
 the expansion of
$z$ in the basis $(\phi_k)_{k\in\Z}$ converges in the graph norm of $A$ and
since $\mathfrak C\vert_{\Dom(A)}\in\mathcal L(D(A),Y)$, we have
\eq{
    P(\lambda) u = \CC x = \CC z
= \sum_{k\in \Z} \iprod{z}{\phi_k} \gg_k
 =u\sum_{k\in \Z} \abs{\gg_k}^2 \left(  \frac{1}{\gl-i\mu_k}-\frac{i}{\mu_k}  \right).
}
By~\citel{Guo02c}{Lem.~2.3} there exist 
$N_0\in\N$ and $a,b,c>0$ such that
$a\abs{k}-b\le \sqrt{\abs{\mu_k}}\le a\abs{k}+c$
and $\mu_{-k}=-\mu_k$ whenever $\abs{k}\geq N_0$.
Moreover,~\citel{Guo02c}{Lem.~3.1} implies that $\sup_{k\in \Z}\abs{\gg_k}<\infty $.
Therefore~\citel{Reb95}{Thm.~2.7} implies $\sup_{s\in\R}\abs{P(\gw+is)}<\infty $ for every $\gw>0$, and thus
 $(\mathfrak B,\mathfrak A,\mathfrak{C}, I, 0)$
is well-posed by  \cref{lem:IPBCSWP}(a).
 \end{proof}

Our main result introduces a polynomially stabilising controller for~\eqref{eq:scole}.

\begin{proposition}
\label{prp:scoleMain}
   If  $\kappa>0$ and $\ell>0$, then the observer-based controller 
\eqn{
\label{eq:scoleObsContr}
\begin{cases}
\rho(\xi)\hat w_{tt}(\xi,t)=-(EI\hat w_{\xi\xi})_{\xi\xi}(\xi,t),\hspace{1.5cm} \xi\in(0,1), \ t>0,\\
\hat w(0,t)=0,\quad \hat w_\xi(0,t)=0\\
m\hat w_{tt}(1,t)-(EI\hat w_{\xi\xi})_\xi(1,t)= 0\\
J\dot{\hat q}(t)+EI(1)\hat w_{\xi\xi}(1,t)=-\kappa\hat q(t)+u_1(t)\\
J\hat w_{\xi t}(1,t)=J\hat q(t)-\ell EI(1)\bigl(\hat w_{\xi\xi}(1,t)- w_{\xi\xi}(1,t)\bigr)+\ell u_2(t)\\
u(t)=-\kappa\hat q(t)+u_1(t)
\end{cases}
}
with initial data $(\hat w(\cdot,0),\hat w_t(\cdot,0),\hat  w_t(1,0),\hat q(0))\tp \in H_\ell^2(0,1)\times L^2(0,1)\times \C^2 $
        stabilises the SCOLE system~\eqref{eq:scole} in such a way that the closed-loop semigroup is polynomially stable with exponent $\polpar=1/2$.
        Moreover, the closed-loop system with input
$(u_1(t), u_2(t))\tp$ and output $(w_{\xi\xi}(1,t), \hat w_{\xi\xi}(1,t))\tp$ is externally well-posed
    in the sense that for every $t>0$ there exists $M_t>0$ such that  the generalised solutions of the closed-loop system satisfy
    \eq{
    \begin{aligned}
        \MoveEqLeft[4] \|x(t)\|_X^2+\|\hat x(t)\|_X^2+
            \norm{(w_{\xi\xi}(1,\cdot),
            \hat w_{\xi\xi}(1,\cdot))\tp}_{L^2(0,t)}^2\\
        &\leq M_t\left(\|x(0)\|_X^2+\|\hat x(0)\|_X^2+
            \norm{(u_1,u_2)\tp}_{L^2(0,t)}^2
        \right),
    \end{aligned}
  }
where $\hat x(t)= (\hat w(\cdot,t), \hat w_t(\cdot,t),\hat w_t(1,t), \hat q(t))\tp$.
\end{proposition}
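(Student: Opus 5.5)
The plan is to write the controller~\eqref{eq:scoleObsContr} in the abstract form~\eqref{eq:controller} and then apply \cref{thm:ObsStab}(c). The external well-posedness estimate is then just the well-posedness of $(\mcB_e,\mcA_e,\mcC_e,Q_e,B_{ei})$ combined with \cref{prp:WPBCSsolutions}.

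\textbf{Identifying the gains.} With $\hat x=(\hat w,\hat w_t,\hat w_t(1),\hat q)\tp$ and the plant data $Q=0$, $B_iu=(0,0,0,J^{-1}u)\tp$, I would choose
\eq{
\mcK x=-\kappa q, \qquad L_i=0, \qquad L=-\ell/J\in\Lin(\C),
}
for $x=(w,v,p,q)\tp$.
\begin{itemize}
\item The $\hat q$-equation is then the fourth component of $\mcA\hat x+B_i u$ with $u=\mcK\hat x+u_1$.
\item The condition on $\hat w_{\xi t}(1,t)$ reads $\mcB\hat x=v'(1)-q=-\tfrac{\ell}{J}(\mcC\hat x-y)+\tfrac{\ell}{J}u_2$. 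This is the boundary equation of~\eqref{eq:controller} with $L=-\ell/J$, up to the harmless sign convention of the external input $u_2$.
\end{itemize}
In the weighted norm of $X$ we have $\iprod{B_iu}{x}_X=u\conj q$, so $B_i^\ast x=q$ and $\mcK=-\kappa B_i^\ast\in\Lin(X,U)$ is bounded.

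\textbf{Verifying the hypotheses of \cref{thm:ObsStab}.}
\begin{itemize}
\item Since $(\mcB,\mcA,\mcC,I,0)$ is well-posed by \cref{prp:scoleWP}, and $B_i$, $\mcK$ are bounded while $[Q,L]=[0,-\ell/J]$ only rescales the boundary input, the node $(\mcB,\mcA,\pmatsmall{\mcC\\ \mcK},[0,L],[B_i,0])$ is well-posed. Here I use \cref{prp:CascCoupWP}-type arguments and \cref{rem:DistributedInputImap}.
\item $P_K(\gl)=\mcK(\gl-A)\inv B_i$ satisfies $\norm{P_K(\gl)}\lesssim(\re\gl)\inv$ because $A$ generates a unitary group. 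Hence $(I-P_K)\inv\in H_\infty(\C_\gb^+)$ for $\gb$ large.
\item $P_L(\gl)=P_0(\gl)L=-\tfrac{\ell}{J}P_0(\gl)$, where $P_0$ is the transfer function from \cref{prp:scoleWP}. Because $\re\iprod{\mcA x}{x}=\re\iprod{\mcB x}{\mcC x}$ and $\ell/J>0$, \cref{lem:IPBCSWP}(c) gives $(I+\tfrac{\ell}{J}P_0)\inv\in H_\infty(\C_0^+)$.
\end{itemize}
\cref{thm:ObsStab} therefore yields well-posedness of the closed loop and that $A_K$ and $A_L$ generate semigroups.

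\textbf{Stability of $T_L$.} $A_L=\mcA|_{\ker(\mcB+\frac{\ell}{J}\mcC)}$, i.e.\ the collocated moment feedback $v'(1)-q=-\tfrac{\ell}{J}EI(1)w''(1)$, which dissipates energy at rate $\tfrac{\ell}{J}\abs{\mcC x}^2$. I expect exponential stability here via the Riesz basis/spectral analysis of~\cite{Guo02c}, which treats exactly this boundary feedback for the non-uniform SCOLE model. Alternatively I would use the exact observability of $(\mcC,A)$ implied by the gap condition $\sqrt{\abs{\mu_k}}\sim a\abs{k}$ and $\inf_k\abs{\gamma_k}>0$ via an Ingham-type argument, and then the collocated-feedback result~\citel{ChiPau23}{Thm.~3.2}.

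\textbf{Stability of $T_K$ (expected main obstacle).} $A_K=A-\kappa B_iB_i^\ast$ is the undamped SCOLE model with a bounded rank-one damping acting only on the angular velocity of the tip mass, so it cannot be exponentially stable. I would prove polynomial stability with exponent $1/2$ through~\citel{BorTom10}{Thm.~2.4}, i.e.\ by showing $i\R\subset\rho(A_K)$ and $\norm{(is-A_K)\inv}\lesssim 1+s^2$.
\begin{itemize}
\item With the orthogonal eigenbasis $(\phi_k)$ of $A$, a standard bounded-damping resolvent estimate (e.g.\ via the observability-type condition of~\cite{ChiPau23} or~\cite{AnaLea14}) reduces this to a lower bound $\abs{B_i^\ast\phi_k}\gtrsim\abs{\mu_k}^{-1}$, together with the gap $\abs{\mu_{k+1}-\mu_k}\gtrsim\abs{k}$.
\item The gap follows from $\sqrt{\abs{\mu_k}}\sim a\abs{k}$.
\item The lower bound on $B_i^\ast\phi_k$, the $q$-component $\phi_k'(1)$ of the normalised eigenfunctions, requires the asymptotic eigenfunction expansions of~\cite{Guo02c}, and this is where I expect the real work to lie.
\end{itemize}
Once $T_K$ is polynomially stable with $\polpar=1/2$ and $T_L$ is exponentially stable, \cref{thm:ObsStab}(c) gives polynomial closed-loop stability with exponent $1/2$.
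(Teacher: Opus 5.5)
Your proposal is correct. It coincides with the paper's proof in everything except the treatment of $T_K$:
\begin{itemize}
\item the same gains $\KK x=-\kappa q$, $L=-\ell/J$, $L_i=0$;
\item the same verification of the hypotheses of \cref{thm:ObsStab}, namely boundedness of $\KK$ together with the unitary group for $P_K$, and \cref{lem:IPBCSWP}(c) for $I+\frac{\ell}{J}P_0$;
\item \citel{Guo02c}{Thm.~4.2} for the exponential stability of $T_L$.
\end{itemize}
Your observation that $u_2$ enters~\eqref{eq:scoleObsContr} with the opposite sign to~\eqref{eq:controller} is correct and harmless.

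For $T_K$ the paper simply cites \citel{FkiPau26}{Thm.~3.3}. You instead derive $\norm{(is-A_K)\inv}\lesssim 1+s^2$ from the orthogonal eigenbasis of $A$ and then apply \citel{BorTom10}{Thm.~2.4}. This is a valid and more self-contained route, modulo the spectral facts of \cite{Guo02c}. It also makes transparent where the exponent $1/2$ comes from, namely $\abs{B_i^\ast\phi_k}^{-2}\approx\mu_k^2$.

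The step you single out as the main difficulty is in fact immediate. The fourth component of $A\phi_k=i\mu_k\phi_k$ reads $-J\inv EI(1)w_k''(1)=i\mu_k q_k$. Hence $B_i^\ast\phi_k=q_k=i\gg_k/(J\mu_k)$, and the lower bound $\abs{B_i^\ast\phi_k}\gtrsim\abs{\mu_k}\inv$ is equivalent to $\inf_k\abs{\gg_k}>0$. This is the same spectral fact that underlies the exponential stability of $T_L$ and your Ingham alternative.

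Two points need more care than you give them.
\begin{itemize}
\item The two-sided bounds $a\abs{k}-b\le\sqrt{\abs{\mu_k}}\le a\abs{k}+c$ alone do not yield any spectral gap. You need the finer eigenvalue asymptotics of \cite{Guo02c}, although a uniform gap is enough for the resolvent estimate.
\item Because the damping $\kappa B_iB_i^\ast$ has rank one, getting $i\R\subset\rho(A_K)$ requires every eigenvalue of $A$ to be simple and $\gg_k\neq 0$ for all $k$. Both follow from the exponential stability of $T_L$. Any eigenvector $\phi$ of $A$ with $\CC\phi=0$ (and such a $\phi$ exists in any eigenspace of dimension at least two, since $\CC$ is scalar-valued) satisfies $\BB\phi=0=L\CC\phi$. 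It is therefore an eigenvector of $A_L$ with eigenvalue on $i\R$, which is impossible.
\end{itemize}
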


\begin{proof}
If we define $\KK\in \Lin(X,U)$ by $\KK x=-\kappa q$ for $x=(w,v,p,q)\tp\in X$ and choose
 $L=-\ell/J\in \Lin(Y,U_b)$ and $L_i=0\in \Lin(Y,X)$, then the abstract representation of~\eqref{eq:scoleObsContr} with state $\hat x(t)= (\hat w(\cdot,t), \hat w_t(\cdot,t),\hat w_t(1,t), \hat q(t))\tp\in X$ has the form of the observer-based controller~\eqref{eq:controller}. We will verify the assumptions in \cref{thm:ObsStab}.
We have from \cref{prp:scoleWP} that $(\mathfrak B,\mathfrak A,\mathfrak C,I,0)$ is well-posed, and since $\KK\in \Lin(X,U)$, $(\mcB ,\mcA,  \pmatsmall{\mcC\\ \mcK}, [Q,L], [B_i,L_i])$ is a well-posed boundary node on $\BNsp{U_b}{U_e}{X}{Y\times U}$.
We will next investigate the transfer functions $P_K$ and $P_L$ in \cref{thm:ObsStab}.
Since $Q=0$, we have 
$P_K(\lambda)=\mathfrak K(\lambda-A)^{-1}B_i$, $\gl\in \rho(A)$, and thus $\KK\in \Lin(X,U)$ and $\norm{(\gl-A)\inv}\le 1/\re\gl$ for $\gl\in\C_0^+$ imply  
    $(I-P_K(\cdot))^{-1}\in H_\infty(\C _\beta^+;\mathcal L( U ))$ for some $\beta>0$.
Moreover, if we  denote the transfer
    function of $(\mathfrak B,\mathfrak A,\mathfrak C,I,0)$ by $P_0$, then
    $I-P_L(\lambda)=I-P_0(\lambda)L=I+\frac\ell JP_0(\lambda)$ for $\gl\in\C_0^+$. Therefore \cref{prp:scoleWP} and \cref{lem:IPBCSWP}(c) imply that 
$(I-P_L(\cdot))\inv \in H_\infty(\C_{\gb}^+;\Lin(Y))$ for all $\gb>0$.

Finally, we will analyse the stability of the semigroups $T_K$ and $T_L$ generated by $A_K=(\AA+B_i\KK)\vert_{\ker(\BB)}$ and $A_L=\AA\vert_{\ker(\BB-L\CC)}$, respectively.
For $x=(w,v,p,q)\in D(\AA)$ the condition  $\mathfrak Bx=L\mathfrak Cx$ is equivalent to
    $Jq=Jv'(1)+\ell EI(1)w''(1)$. Because of this, the semigroup $T_L$ is exponentially stable by~\cite[Thm.~4.2]{Guo02c}.
On the other hand, the structure of $\KK$ together with~\cite[Thm.~3.3]{FkiPau26} implies that $T_K$ is polynomially stable with exponent $\polpar=1/2$.
The claims regarding the well-posedness and stability of the closed-loop system now follow from \cref{thm:ObsStab}.
\end{proof}

\subsection*{Declaration of AI use} Generative AI (Claude Sonnet 4.6) was used to assist in constructing the Rouch\'e argument in the proof of \cref{prp:Wave1D}. The authors verified all parts of the AI-assisted argument manually.

\end{document}